\pdfoutput=1
\RequirePackage{ifpdf}
\ifpdf 
\documentclass[pdftex]{sigma}
\else
\documentclass{sigma}
\fi

\numberwithin{equation}{section}

\newtheorem{Theorem}{Theorem}[section]
\newtheorem{Lemma}[Theorem]{Lemma}
\newtheorem{Proposition}[Theorem]{Proposition}
\newtheorem*{Goal}{Goal}
 { \theoremstyle{definition}
\newtheorem{Definition}[Theorem]{Definition}
\newtheorem{Example}[Theorem]{Example}
\newtheorem{Remark}[Theorem]{Remark} }

\newcounter{pphcounter}[subsection]
\renewcommand{\thepphcounter}{\thesubsection.\arabic{pphcounter}}
\newcommand{\pph}[1]{\noindent
\refstepcounter{pphcounter}\bf\thepphcounter. #1\rm}

\newcommand{\aA}{\mathbb A}
\newcommand{\RR}{\mathbb R}
\newcommand{\CC}{\mathbb C}
\newcommand{\QQ}{\mathbb Q}
\newcommand{\ZZ}{\mathbb Z}
\newcommand{\PP}{\mathbb P}
\newcommand{\NN}{\mathbb N}
\newcommand{\FF}{\mathbb F}
\newcommand{\MM}{\mathbb M}
\newcommand{\TT}{\mathbb T}
\newcommand{\II}{\mathbb I}
\newcommand{\cA}{\mathcal A}
\newcommand{\cB}{\mathcal B}

\newcommand{\cD}{\mathcal D}

\newcommand{\cF}{\mathcal F}
\newcommand{\SF}{[\mathcal F]}
\newcommand{\SFL}{\SF_{\Lambda}}
\newcommand{\cE}{\mathcal E}
\newcommand{\cM}{\mathcal M}
\newcommand{\cS}{\mathcal S}
\newcommand{\cSb}{\cS_\bullet}
\newcommand{\cSw}{\cS_\circ}
\newcommand{\cW}{\mathcal W}
\newcommand{\ocE}{{\cE_{\gL}} }
\newcommand{\ocA}{\cA_\gL}
\newcommand{\ocM}{\cM_\gL}
\newcommand{\ocW}{\cW_\gL}
\newcommand{\oocW}{\overline{\cW}_\gL}
\newcommand{\WLD}{\ocW^\vee}
\newcommand{\oWLD}{\oocW^\vee}
\newcommand{\vge}{\varepsilon}
\newcommand{\gep}{\vge_+}
\newcommand{\gem}{\vge_-}
\newcommand{\geb}{\vge_\bullet}
\newcommand{\gew}{\vge_\circ}
\newcommand{\inp}{\raisebox{.2ex}{$\centerdot$}}
\newcommand{\sD}{\mathsf D}
\newcommand{\sDf}{\sD^\flat}
\newcommand{\sR}{\mathsf R}
\newcommand{\sRf}{\sR^\flat}
\newcommand{\sP}{\mathsf P}
\newcommand{\spv}{{\sP}^\star}
\newcommand{\spb}{{\sP}^\bullet}
\newcommand{\spw}{{\sP}^\circ}
\newcommand{\osP}{\sP_{\!\gL}}
\newcommand{\ospv}{\osP^\star}
\newcommand{\ospb}{\osP^\bullet}
\newcommand{\ospw}{\osP^\circ}
\newcommand{\ospz}{\osP^{\mathsf z}}
\newcommand{\gL}{\Lambda}
\newcommand{\ga}{\alpha}
\newcommand{\cga}{\check{\ga}}
\newcommand{\gb}{\beta}
\newcommand{\cgb}{\check{\gb}}
\newcommand{\gs}{\sigma}
\newcommand{\vgs}{\varsigma}
\newcommand{\gl}{\lambda}
\newcommand{\ogs}{\gs}
\newcommand{\fv}{\mathfrak v}
\newcommand{\fp}{\mathfrak{p}}
\newcommand{\ma}{\mathfrak{A}}
\newcommand{\mav}{{\ma}^{\star\star}}
\newcommand{\bv}{\mathbf v}
\newcommand{\bx}{\mathbf x}
\newcommand{\bt}{\mathbf t}
\newcommand{\bs}{\mathbf s}
\newcommand{\bb}{\mathbf b}
\newcommand{\bw}{\mathbf w}
\newcommand{\bz}{\mathbf z}
\newcommand{\bp}{\mathbf p}
\newcommand{\ubp}{[\bp]}
\newcommand{\bi}{\mathbf i}
\newcommand{\bh}{\mathbf h}
\newcommand{\sfq}{\mathsf q}
\newcommand{\sm}{\mathsf m}
\newcommand{\sn}{\mathsf n}
\newcommand{\ve}{\mathsf {vec}}
\newcommand{\nul}{\mathsf {0}}
\newcommand{\sZ}{\mathsf {Z}}
\newcommand{\sT}{\mathsf {T}}
\newcommand{\auto}{\mathsf {Aut}}
\newcommand{\rank}{\operatorname{\mathsf{rank}}}
\newcommand{\conv}{\mathsf{conv}}
\newcommand{\Jac}{\mathsf{Jac}}
\newcommand{\diag}{\mathsf{diag}}
\newcommand{\perm}{\mathsf{Perm}}
\newcommand{\pad}{\mathsf{Path}}
\newcommand{\matel}{{\mathsf{Mat}}_{\ospv}}
\newcommand{\Spec}{\mathsf{Spec}}
\newcommand{\Proj}{\mathsf{Proj}}
\newcommand{\V}{\omega}
\newcommand{\EV}{\Omega}
\newcommand{\sign}{\mathrm{sign}}
\newcommand{\ho}{\mathrm{H}}
\newcommand{\Hom}{\mathrm{Hom}}
\newcommand{\map}{\mathrm{Maps}}
\newcommand{\gG}{\Gamma}
\newcommand{\gGL}{\gG_{\!\!\gL}}
\newcommand{\gGLD}{\gG_{\!\!\gL}^\vee}
\newcommand{\ZZF}{\textsc{czzf}}
\newcommand{\ze}[1]{\mathsf{Z}^{#1}}
\newcommand{\ol}[1]{\overline{#1}}
\newcommand{\breuk}[2]{\textstyle{\frac{#1}{#2}}\displaystyle}

\begin{document}
\allowdisplaybreaks

\newcommand{\arXivNumber}{1805.09627}

\renewcommand{\thefootnote}{}

\renewcommand{\PaperNumber}{110}

\FirstPageHeading

\ShortArticleName{Zhegalkin Zebra Motives Digital Recordings of Mirror Symmetry}

\ArticleName{Zhegalkin Zebra Motives\\ Digital Recordings of Mirror Symmetry\footnote{This paper is a~contribution to the Special Issue on Modular Forms and String Theory in honor of Noriko Yui. The full collection is available at \href{http://www.emis.de/journals/SIGMA/modular-forms.html}{http://www.emis.de/journals/SIGMA/modular-forms.html}}}

\Author{Jan STIENSTRA}

\AuthorNameForHeading{J.~Stienstra}

\Address{Department of Mathematics, Utrecht University,\\ P.O.~Box 80010, 3508 TA Utrecht, The Netherlands}

\Email{\href{mailto:J.Stienstra@uu.nl}{J.Stienstra@uu.nl}}

\ArticleDates{Received May 10, 2018, in final form October 02, 2018; Published online October 13, 2018}

\Abstract{Zhegalkin zebra motives are tilings of the plane by black and white polygons representing certain ${\mathbb F}_2$-valued functions on ${\mathbb R}^2$. They exhibit a rich geometric structure and provide easy to draw insightful visualizations of many topics in the physics and mathematics literature. The present paper gives some pieces of a general theory and a few explicit examples.
Many more examples will be shown in the forthcoming article ``Zhegalkin zebra motives: algebra and geometry in black and white''.}

\Keywords{Zhegalkin polynomials; motives; dimer models; mirror symmetry}
\Classification{52C20; 82B20; 14M25}

\tableofcontents

\renewcommand{\thefootnote}{\arabic{footnote}}
\setcounter{footnote}{0}

\section{Introduction}\label{sec:intro}

\subsection{Zhegalkin zebra functions}\label{subsec:intro ZZF}

The constructions of motives in algebraic geometry heavily depend on the intersection theory of algebraic cycles and, hence, on the fairly delicate choice of an adequate equivalence relation on algebraic cycles. \textit{Chow motives}, for instance, are based on \textit{rational equivalence}, which is the finest equivalence relation on algebraic cycles yielding a good intersection theory \cite{MNP}.

On the contrary, the ``motives'' in the present paper are built with the usual set theoretical operations from simple subsets of the plane, which we call zebras. In 1927 Zhegalkin pointed out that functions with values in the field $\FF_2=\ZZ/2\ZZ$ with the usual addition and multiplication can replace the standard Boolean formalism. The \textit{zebra with frequency} $\fv\in\RR^2$, $\fv\neq\nul$, is the function on~$\RR^2$ given by
\begin{gather}\label{eq:zebra}
\ze{\fv}(\bx)=\lfloor 2\bx\inp\fv\rfloor\bmod 2\qquad\textrm{for}\quad \bx\in\RR^2.
\end{gather}
Here $\inp$ is the dot product on $\RR^2$ and for a real number $r$ the integer $\lfloor r\rfloor$ is such that $0\leq r-\lfloor r\rfloor<1$. It is sometimes convenient to identify the Euclidean plane $\RR^2$ and the complex plane $\CC$. In this paper we only use zebras for which the frequencies are positive integer multiples of the complex numbers
\begin{gather}\label{eq:frequencies}
\fv_1=\sqrt{3} \vge^5,\qquad\fv_2=\vge^4,\qquad\fv_3=\sqrt{3} \bi ,\qquad\fv_4=\vge^2,\qquad \fv_5=\sqrt{3} \vge,\qquad\fv_6=1 ,
\end{gather}
with $\bi=\sqrt{-1}$ and $\vge=e^{\pi \bi/6}$; see Fig.~\ref{fig:zebra-frequencies}.

\begin{figure}[h!]\centering
\includegraphics{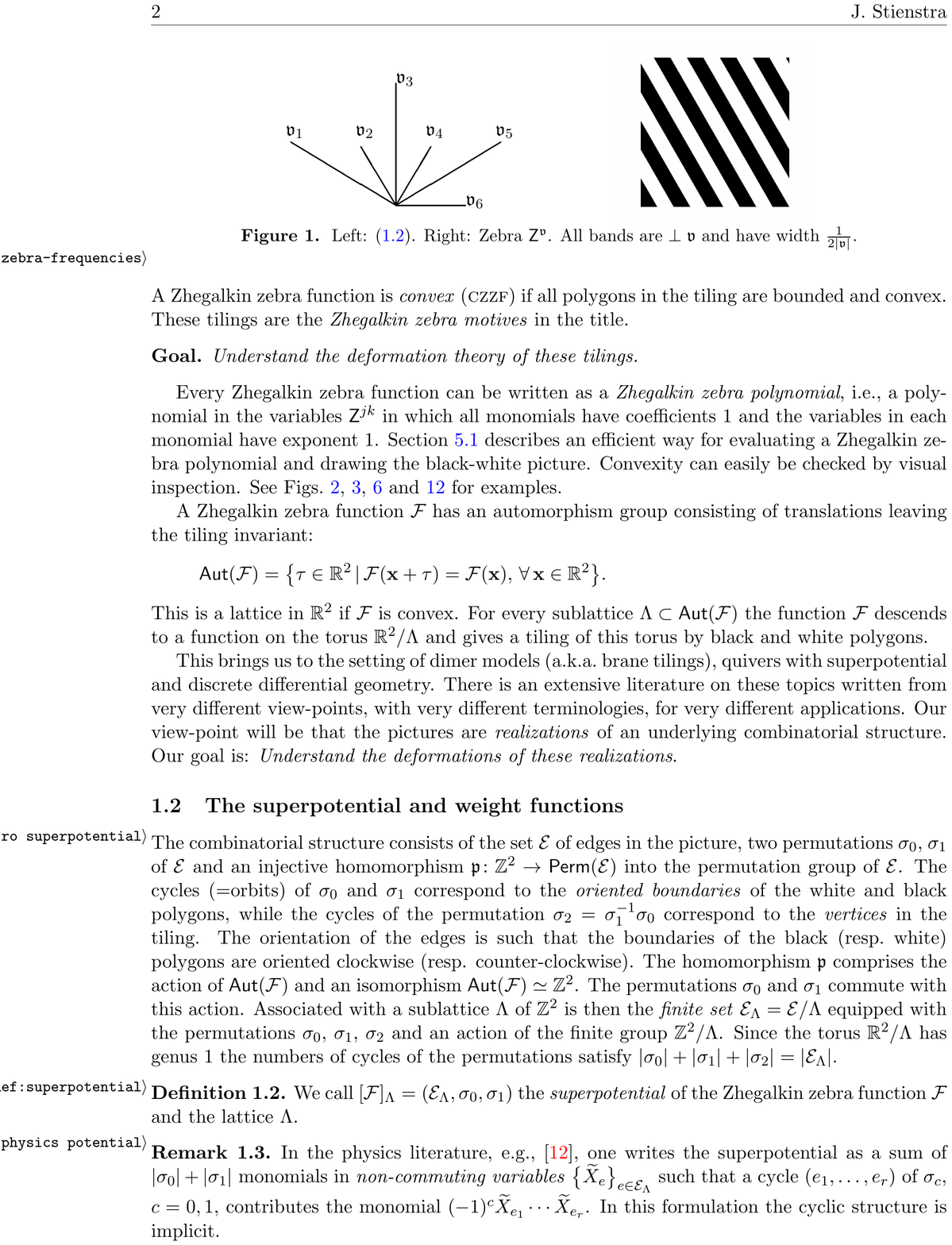}\qquad
\includegraphics[width=30mm]{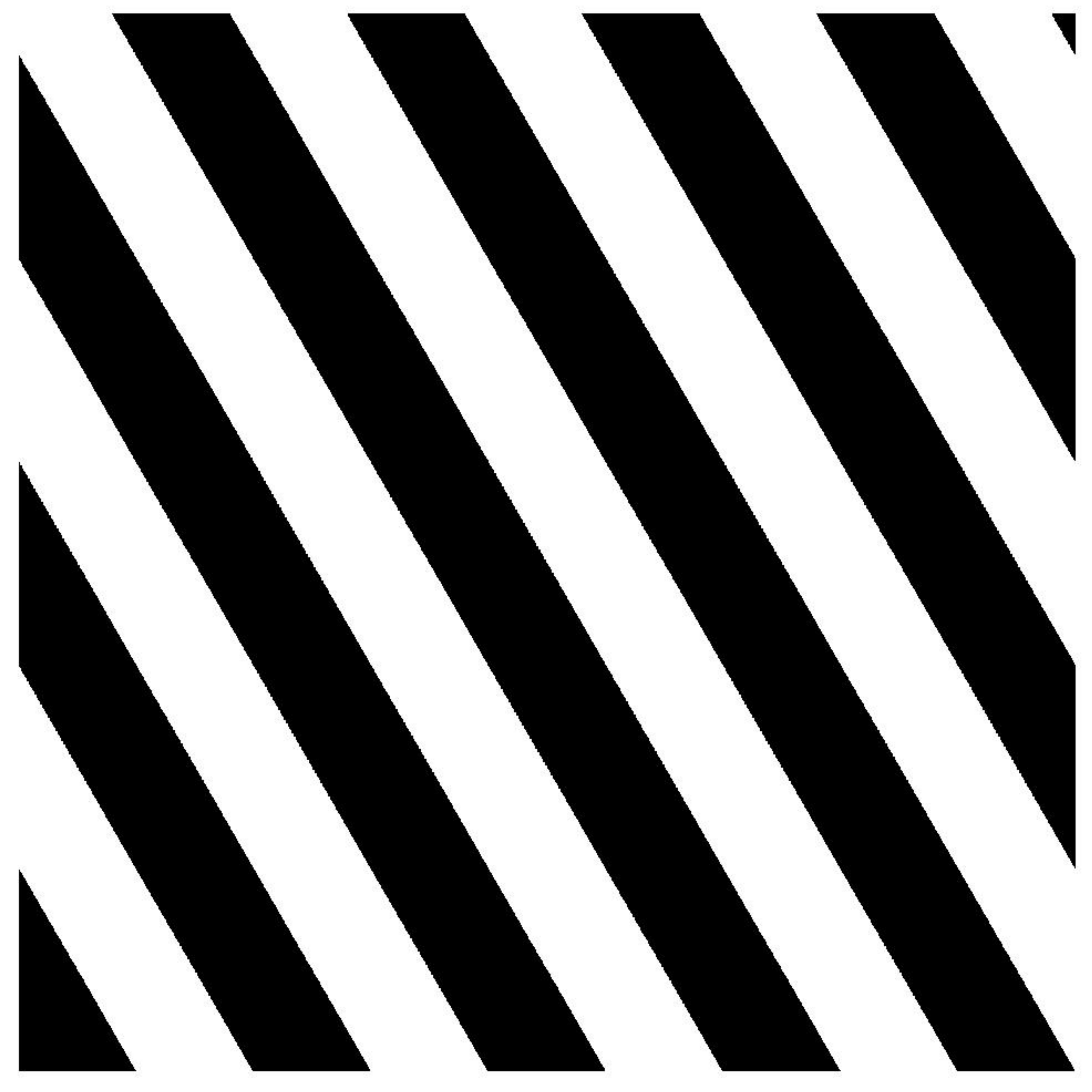}

\caption{Left: \eqref{eq:frequencies}. Right: Zebra $\ze{\fv}$. All bands are $\perp\fv$ and have width $\frac{1}{2|\fv|}$.} \label{fig:zebra-frequencies}
\end{figure}

\begin{Definition}\label{def:zzf}We denote the zebra with frequency $k\fv_j$ by $\ze{jk}$. The elements of the ring of $\FF_2$-valued functions on $\RR^2$ generated by the zebras $\ze{jk}$ are called \textit{Zhegalkin zebra functions}. Such a function $\cF$ gives a tiling of the plane by white ($\cF=0$) and black ($\cF=1$) polygons. A~Zhegalkin zebra function is \emph{convex} (\ZZF) if all polygons in the tiling are bounded and convex. These tilings are the \textit{Zhegalkin zebra motives} in the title.
\end{Definition}

\begin{Goal} Understand the deformation theory of these tilings.
\end{Goal}

Every Zhegalkin zebra function can be written as a \textit{Zhegalkin zebra polynomial}, i.e., a polynomial in the variables $\ze{jk}$ in which all monomials have coefficients $1$ and the variables in each monomial have exponent $1$. Section~\ref{subsec:draw} describes an efficient way for evaluating a Zhegalkin zebra polynomial and drawing the black-white picture. Convexity can easily be checked by visual inspection. See Figs.~\ref{fig:classic1}, \ref{fig:classic2}, \ref{fig:kagome} and~\ref{fig:model12b1} for examples.

A Zhegalkin zebra function $\cF$ has an automorphism group consisting of translations leaving the tiling invariant:
\begin{gather*}
\auto(\cF) = \big\{ \tau\in\RR^2\,|\,\cF(\bx+\tau)=\cF(\bx) ,\,\forall\, \bx\in\RR^2 \big\} .
\end{gather*}
This is a lattice in $\RR^2$ if $\cF$ is convex. For every sublattice $\gL\subset\auto(\cF)$ the function $\cF$ descends to a function on the torus~$\RR^2/\gL$ and gives a tiling of this torus by black and white polygons.

This brings us to the setting of dimer models (a.k.a.\ brane tilings), quivers with superpotential and discrete differential geometry. There is an extensive literature on these topics written from very different view-points, with very different terminologies, for very different applications. Our view-point will be that the pictures are \textit{realizations} of an underlying combinatorial structure. Our goal is:
\textit{Understand the deformations of these realizations}.

\subsection{The superpotential and weight functions}\label{subsec:intro superpotential}
The combinatorial structure consists of the set $\cE$ of edges in the picture, two permuta\-tions~$\gs_0$,~$\gs_1$ of~$\cE$ and an injective homomorphism $\fp\colon \ZZ^2\rightarrow\perm(\cE)$ into the permutation group of~$\cE$. The cycles (=~orbits) of $\gs_0$ and $\gs_1$ correspond to the \textit{oriented boundaries} of the white and black polygons, while the cycles of the permutation $\gs_2=\gs_1^{-1}\gs_0$ correspond to the \textit{vertices} in the tiling. The orientation of the edges is such that the boundaries of the black (resp.\ white) polygons are oriented clockwise (resp. counter-clockwise). The homomorphism~$\fp$ comprises the action of $\auto(\cF)$ and an isomorphism $\auto(\cF)\simeq\ZZ^2$. The permutations $\gs_0$ and $\gs_1$ commute with this action. Associated with a sublattice $\gL$ of $\ZZ^2$ is then the \textit{finite set} $\ocE=\cE/\gL$ equipped with the permutations $\gs_0$, $\gs_1$, $\gs_2$ and an action of the finite group $\ZZ^2/\gL$. Since the torus $\RR^2/\gL$ has genus $1$ the numbers of cycles of the permutations satisfy $|\gs_0|+|\gs_1|+|\gs_2|=|\ocE|$.
\begin{Definition}\label{def:superpotential} We call $\SFL=(\ocE,\gs_0,\gs_1)$ the \textit{superpotential} of the Zhegalkin zebra function~$\cF$ and the lattice $\gL$.
\end{Definition}

\begin{Remark}\label{rem:physics potential}\looseness=-1 In the physics literature, e.g.,~\cite{HS}, one writes the superpotential as a sum of $|\gs_0|+|\gs_1|$ monomials in \textit{non-commuting variables} $\big\{\widetilde{X}_e\big\}_{e\in\ocE}$ such that a cycle $(e_1,\ldots,e_r)$ of~$\gs_c$, $c=0,1$, contributes the monomial $(-1)^c \widetilde{X}_{e_1}\cdots\widetilde{X}_{e_r}$. In this formulation the cyclic structure is implicit.
\end{Remark}

\begin{figure}[t]

\centerline{\includegraphics[width=6.6cm]{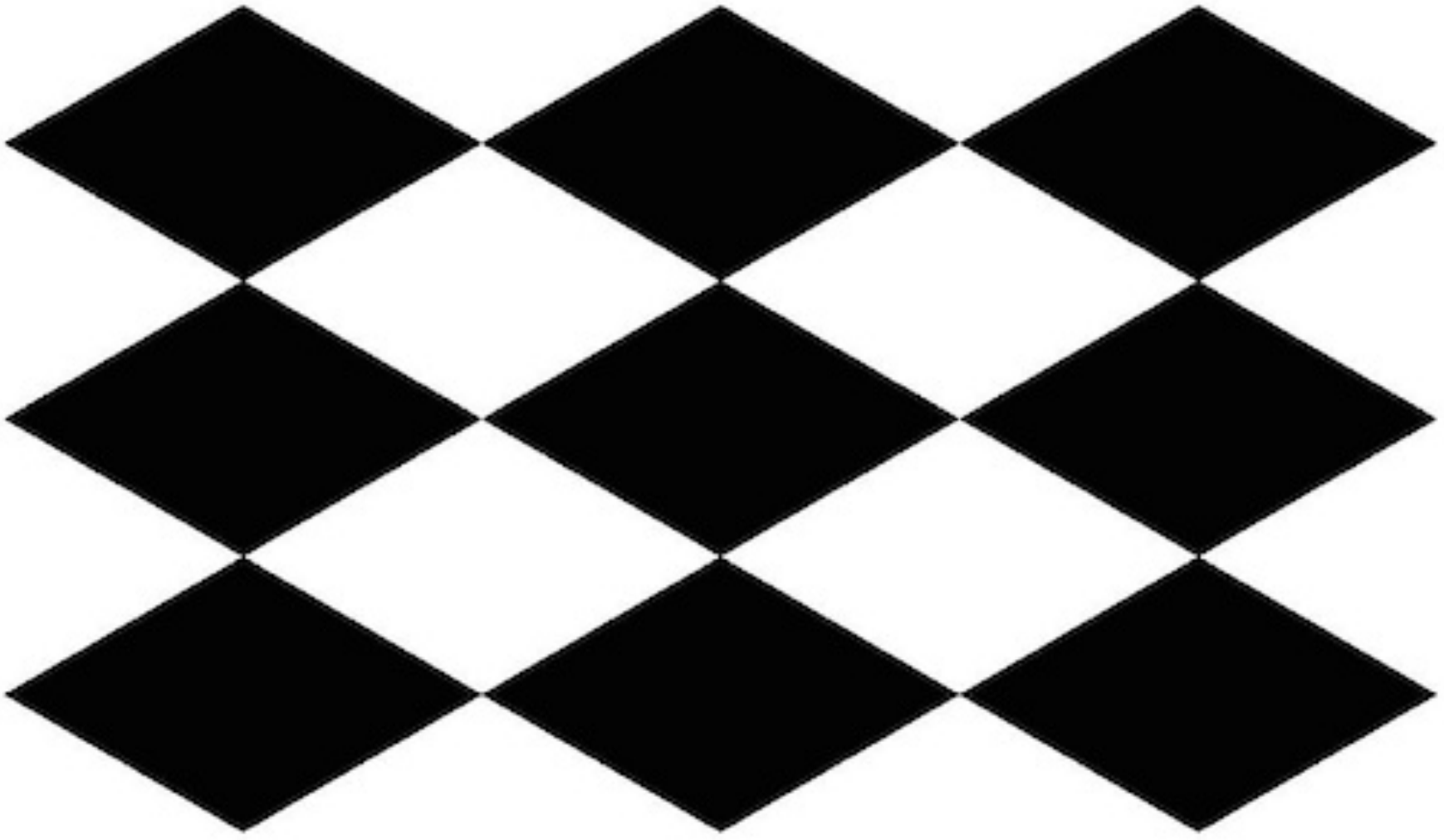} \qquad\quad \includegraphics[width=6.6cm]{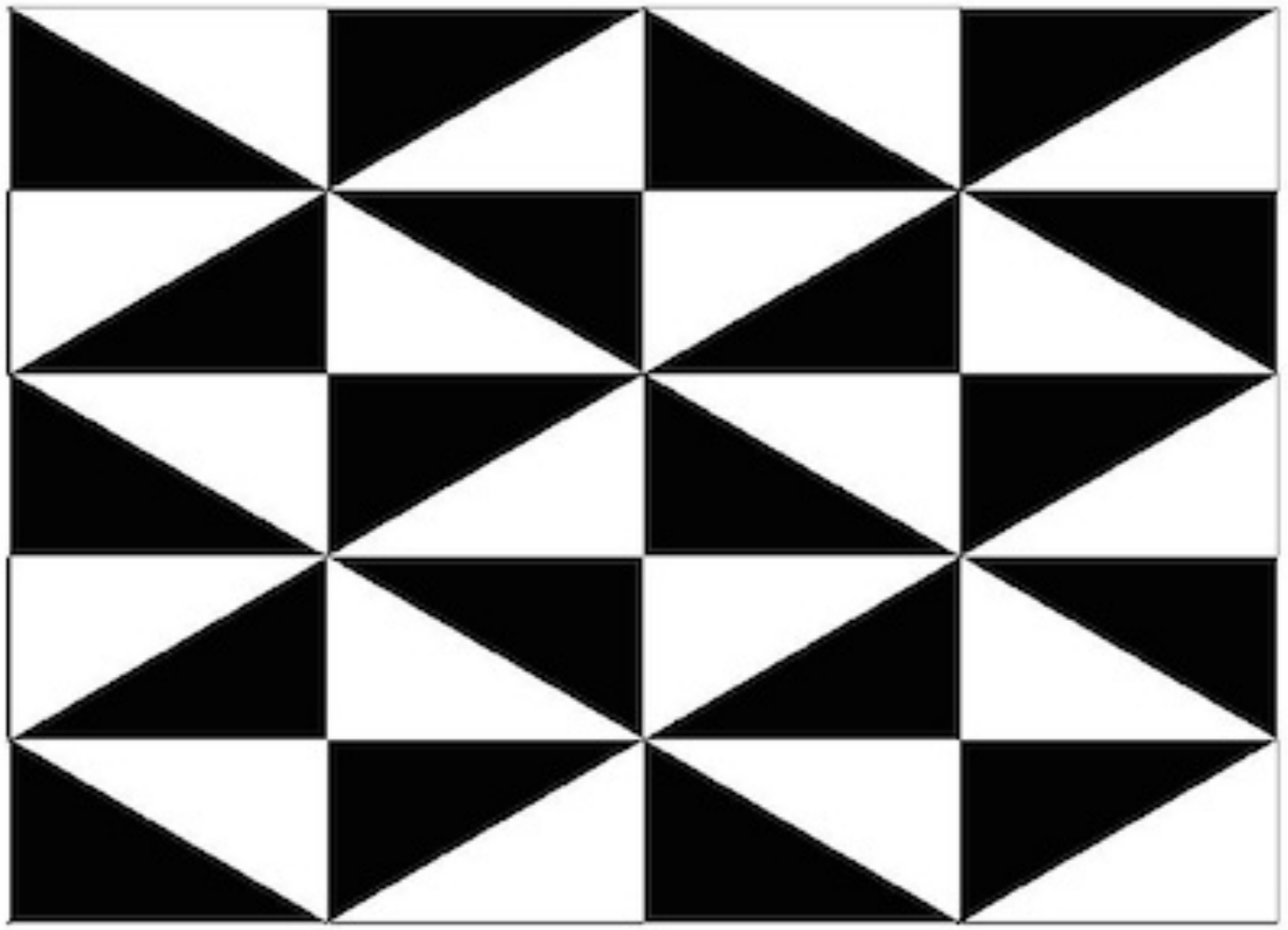}}

\hspace*{20mm} $\begin{array}{l}
\cF_2=\ze{21}+\ze{41}\\
\auto(\cF_2)= \ZZ\fv_6\oplus\frac{1}{3}\ZZ\fv_3\\
\gs_0=(1,2,3,4)\\
\gs_1=(1,4,3,2)\end{array}$ \hspace{22mm}
$\begin{array}{l}
\cF_4=\ze{21}+\ze{31}+\ze{41}+\ze{61}\\
\auto(\cF_4)= \ZZ\fv_6\oplus\frac{1}{3}\ZZ\fv_3\\
\gs_0=(1,2,3)(4,5,6)(7,8,9)(10,11,12)\\
\gs_1=(1,11,9)(4,8,12)(7,2,6)(10,5,3)
\end{array}$
\caption{Some classical patterns realized as \ZZF, with their automorphism groups and corresponding superpotentials.}\label{fig:classic1}
\vspace{-2mm}
\end{figure}

\begin{figure}[t]

\hspace*{3mm}\includegraphics[width=5cm]{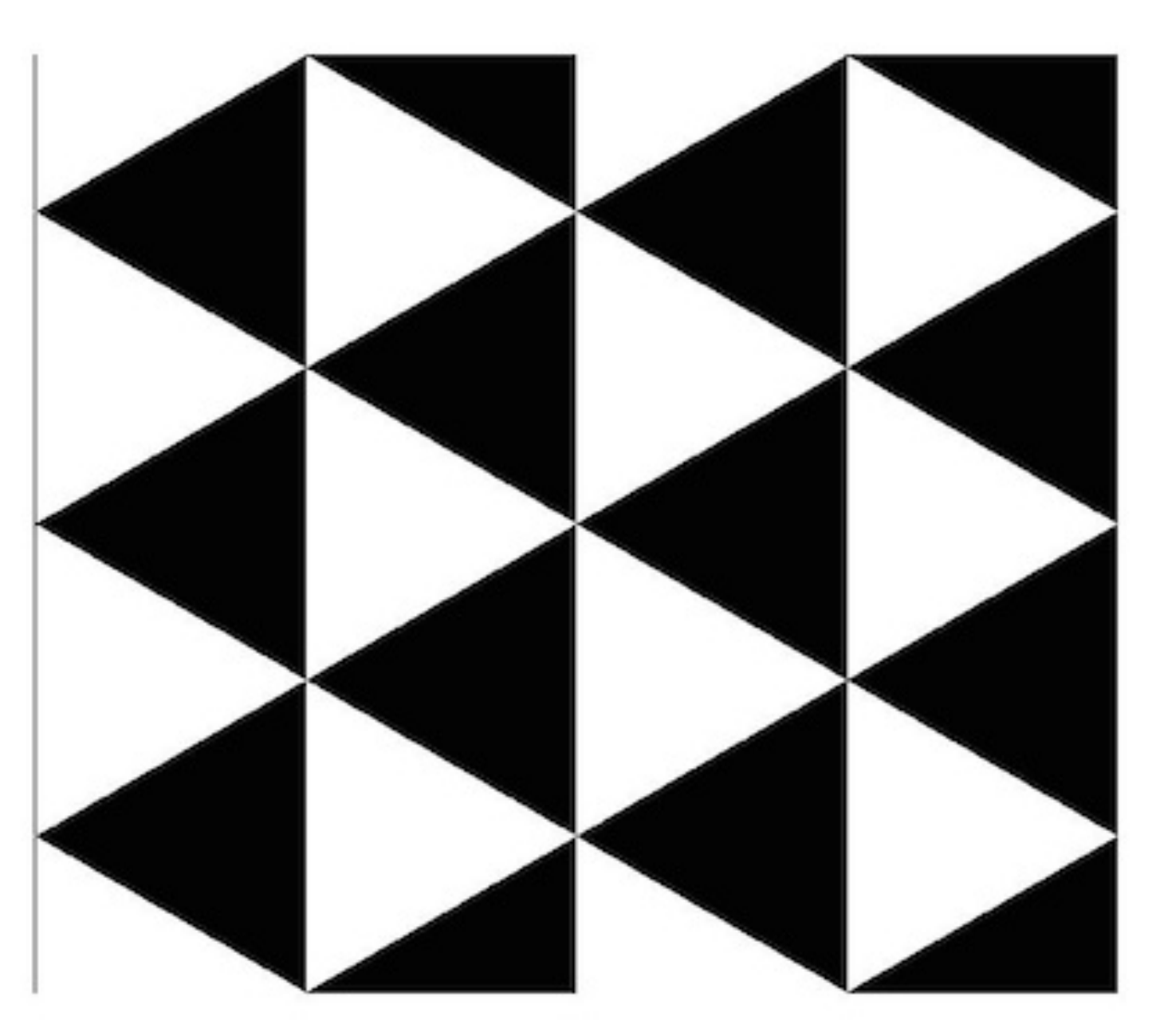}\hspace{17mm} \includegraphics[width=7.5cm]{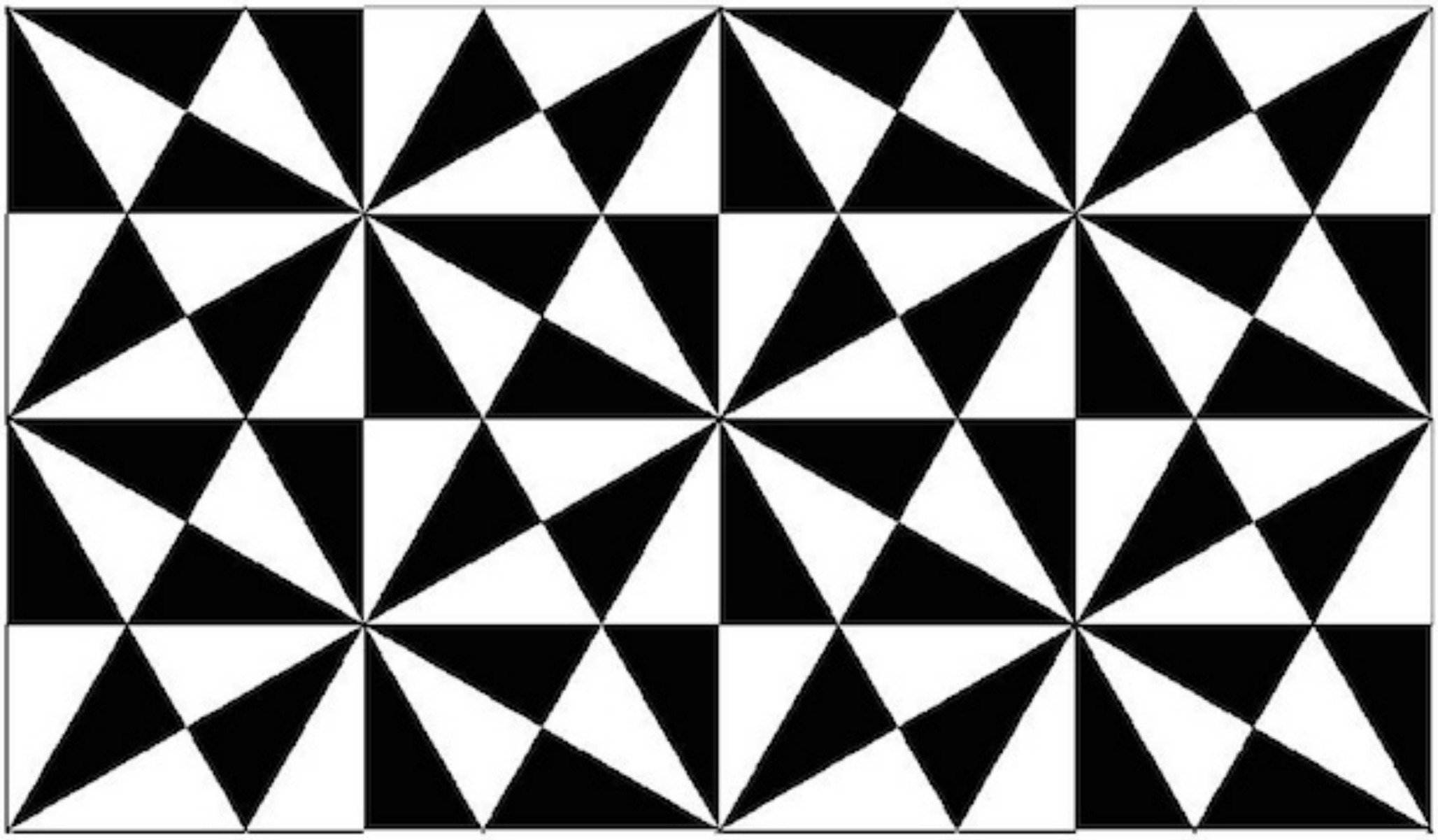}

\hspace*{8mm}$\begin{array}{l}
\cF_3=\ze{21}+\ze{41}+\ze{61}\\
\auto(\cF_3)=\ZZ\fv_6\oplus\frac{1}{3}\ZZ\fv_5\\
\gs_0=(1,2,3)\\
\gs_1=(1,3,2)
\end{array}$ \hspace{6mm}
$\begin{array}{l}
\cF_6=\ze{11}+\ze{21}+\ze{31}+\ze{41}+\ze{51}+\ze{61}\\
\auto(\cF_6)=\frac{1}{3}\ZZ\fv_1\oplus\frac{1}{3}\ZZ\fv_3\\
\gs_0=(1,2,3)(4,5,6)(7,8,9)(10,11,12)(13,14,15)(16,17,18)\\
\gs_1=(1,8,18)(4,11,3)(7,14,6)(10,17,9)(13,2,12)(16,5,15)
\end{array}$

\caption{Some classical patterns realized as \ZZF, with their automorphism groups and corresponding superpotentials.}\label{fig:classic2}
\end{figure}

The superpotential $\SFL=(\ocE,\gs_0,\gs_1)$ provides three abstract graphs
\begin{gather*}
\gGL = \big(s,t\colon \ocE \rightrightarrows\ospv\big) ,\qquad\gGLD = \big(b,w\colon \ocE\rightrightarrows \ospb\cup\ospw\big) ,\\
\cD_{\cF,\gL} = \textsf{vertices}\colon \ \ospv\cup\ospb\cup\ospw,\\
\hphantom{\cD_{\cF,\gL} =}{} \ \textsf{edges}\colon \
\{(\bw,\bv)\in\ospw\times\ospv\,|\,\exists\, e\in\ocE\colon \bw=w(e),\,\bv=t(e)\textrm{ or } s(e)\}\\
\hphantom{\cD_{\cF,\gL} = \textsf{edges}\colon} \ \cup\{(\bb,\bv)\in\ospb\times\ospv\,|\,\exists\, e\in\ocE\colon \bb=b(e),\,\bv=t(e)\textrm{ or } s(e)\}.
\end{gather*}
Here $\ospv$, $\ospb$, $\ospw$ denote the respective sets of vertices, black and white polygons in the tiling of the torus $\RR^2/\gL$ and $s$, $t$, $b$, $w$ are the respective maps which assign to an edge its source, target, adjacent black and white polygons.

In addition to the superpotential the actual pictures also contain a map $\V\colon \cE\rightarrow\RR^2\setminus\{\nul\}$ which specifies for every edge the corresponding vector in $\RR^2$. For a sublattice $\gL\subset\ZZ^2$ we want this specification to be $\gL$-invariant; i.e., it should be a map $\V\colon \ocE\rightarrow\RR^2\setminus\{\nul\}$. We call such a~map $\V$ a~\textit{realization} of $\SFL$. It also gives a realization $\gL_\V$ of the lattice $\gL$ by translations in the plane which leave the tiling specified by $\V$ invariant. We denote the corresponding torus by
\begin{gather*}
\TT_\V = \RR^2/\gL_\V .
\end{gather*}
The Zhegalkin zebra function $\cF$ provides a tiling of $\RR^2$ with automorphism group $\auto(\cF)$ and hence a realization $\V_\cF$ of $\SFL$ for every sublattice $\gL$ of $\ZZ^2$. It identifies $\gL$ with a sublattice $\gL_{\V_\cF}$ of~$\auto(\cF)$.

A realization $\V$ of the superpotential gives a tiling of~$\RR^2$ which modulo $\gL_\V$ gives an embedding of the quiver (=~graph with oriented edges) $\gGL$ into the torus $\TT_\V$ as the 0-cells and 1-cells in the tiling. One can subsequently embed the graphs $\gGLD$ and $\cD_{\cF,\gL}$ into this torus by means of a function $\theta\colon \ocE\rightarrow\RR_{>0}$ for which the sum over each cycle of $\gs_0$ and each cycle of $\gs_1$ is equal to $1$. This function is used to mark in each black/white polygon a point by taking a convex combination of the midpoints of its edges. This will be discussed in detail in Section~\ref{sec:draw GD}. In~\cite{GK} such a~function $\theta$ is called a (\textit{positive}) \textit{fractional matching}. The existence of a fractional matching for $\SFL$ implies $|\ospw|=|\ospb|$.

\begin{Definition}\label{def:weight}An \textit{integer weight function for the superpotential $\SFL$} is a map $\nu\colon \ocE\rightarrow\ZZ_{\geq0}$ for which the sum over each cycle of $\gs_0$ and each cycle of $\gs_1$ is equal to an integer $\deg \nu$ (the \textit{degree of~$\nu$}). The integer weight functions with the operation $+$ form a graded semi-group~$\ocW$. An integer weight function of degree $1$ is called a \textit{perfect matching}, \textit{dimer covering} or \textit{dimer configuration}. The set of perfect matchings is denoted by~$\ocM$.

An integer weight function $\nu$ is said to be \textit{positive} if $\nu(e)>0$ for all $e\in\ocE$.
\end{Definition}

Perfect matchings play a crucial role all over the literature on \textit{dimer models}.
From the permutations $\gs_0$ and $\gs_1$ one can easily check whether perfect matchings exist and determine them all.
Subsequently one can check whether the sum of all perfect matchings is a positive weight function, which then divided by its degree $|\ocM|$ yields a positive fractional matching.

\begin{Definition}\label{def:complete}We say that the superpotential $\SFL$ is \textit{dimer complete} if the sum of all perfect matchings is a positive weight function.
\end{Definition}

If $\SFL$ is dimer complete, the semi-group $\ocW$ is generated by the perfect matchings (see Proposition \ref{prop:matching polytope}):
\begin{gather}\label{eq:weight span I}
\ocW = \ZZ_{\geq0}\ocM .
\end{gather}

Rescaling the axes in the picture of the realization $\V_\cF$ by means of the diagonal matrix $\diag\bigl(\frac{1}{2},\frac{\sqrt{3}}{2}\bigr)$ yields a realization of the superpotential $\SFL$ with edge vectors $\V_\cF(e) \diag\bigl(\frac{1}{2},\frac{\sqrt{3}}{2}\bigr)$. It follows from~\eqref{eq:zebra} (see also Section~\ref{step 1}) that the vertices in the rescaled tiling have coordinates in $\QQ$. By further rescaling with a positive integer factor one can clear the denominators and obtain a realization $\V$ of $\SFL$ with edge vectors in $\ZZ^2$, say $\V(e)=(\V_1(e),\V_2(e))$ with $\V_1,\V_2\colon \ocE\rightarrow \ZZ$.

Now assume that the superpotential $\SFL$ is dimer complete and let $\nu$ be a positive integer weight function. Then, for a sufficiently large integer~$N$ the maps $\nu_1=\V_1+N\nu$, $\nu_2=\V_2+N\nu$ and $\nu_3=N\nu$ are positive weight functions with $\deg \nu_1=\deg \nu_2=\deg \nu_3$. Then $\V=(\nu_1-\nu_3, \nu_2-\nu_3)$ and $\theta=\frac{1}{\deg \nu_3}\nu_3$ yield for every edge $\widetilde{e}$ in the tiling of $\RR^2$ four points $s(\widetilde{e})$, $t(\widetilde{e})$, $b(\widetilde{e})$, $w(\widetilde{e})$, namely the endpoints of that edge and the marked (by $\theta$) points in the polygons adjacent to that edge; see Figs.~\ref{fig:see quadrangles}, \ref{fig:quadrangle}, \ref{fig:kagome} and \ref{fig:quadrangle3}.

These quadrangles (for $e\in\ocE$) constitute a tiling of $\RR^2$. Taken modulo $\gL_\V$ the vertices and edges of the induced quadrangle-tiling give an embedding of the graph $\cD_{\cF,\gL}$ into the torus $\TT_\V$. So $\cD_{\cF,\gL}$ is an \textit{S-quad-graph} in the sense of \cite[Definitions~3.1 and 4.3]{Bob}.

\begin{Definition}\label{def:weight realization}Assume that the superpotential $\SFL$ is dimer complete. A \emph{weight realization} of $\SFL$ is a triple of positive weight functions $(\nu_1,\nu_2,\nu_3)$ such that $\V=(\nu_1-\nu_3, \nu_2-\nu_3)$ is a~realization of $\SFL$ in which all black and white polygons and all quadrangles determined by $\big(\V,\frac{1}{\deg \nu_3}\nu_3\big)$ are strictly convex.
\end{Definition}

Since it is easy to draw pictures (see Section~\ref{subsec:comlatquad}) the conditions in Definition~\ref{def:weight realization} can easily be checked by visual inspection. In many examples one can find weight realizations by staring at the picture of the tiling for the Zhegalkin zebra function~$\cF$ drawn with the method of Section~\ref{subsec:draw}.

One can collect the maps $s,t\colon \ocE\rightarrow\ospv$ and $\nu_1,\nu_2,\nu_3\colon \ocE\rightarrow\ZZ_{>0}$ into a matrix $\mav\big(u_1^{\nu_1}u_2^{\nu_2}u_3^{\nu_3}\big)$ as follows. The rows and columns of $\mav\big(u_1^{\nu_1}u_2^{\nu_2}u_3^{\nu_3}\big)$ correspond with the elements of $\ospv$ and its entries lie in the polynomial ring $\ZZ[u_1,u_2,u_3]$; the entry in row $\bs$ and column $\bt$ is
\begin{gather}\label{eq:Anu123}
\mav\big(u_1^{\nu_1}u_2^{\nu_2}u_3^{\nu_3}\big)_{\bs,\bt} = \sum_{e\in\ocE: s(e)=\bs, t(e)=\bt} u_1^{\nu_1(e)}u_2^{\nu_2(e)}u_3^{\nu_3(e)} .
\end{gather}
The matrix $\mav\big(u_1^{\nu_1}u_2^{\nu_2}u_3^{\nu_3}\big)$ can be written uniquely as a sum
\begin{gather*}
	\mav\big(u_1^{\nu_1}u_2^{\nu_2}u_3^{\nu_3}\big) = \sum_{e\in\ocE}\Phi_{\nu_1,\nu_2,\nu_3}(e)
\end{gather*}
of matrices each of which has only one non-zero entry and this entry is a monomial; see Section~\ref{sec:jacmasterweight}. The algebra generated by the matrices $\Phi_{\nu_1,\nu_2,\nu_3}(e)$ is (isomorphic to) the \textit{Jacobi algebra} $\Jac(\SFL)$; see Theorem~\ref{thm:faithful}.

\begin{figure}[t]\centering
\includegraphics[width=4.5cm]{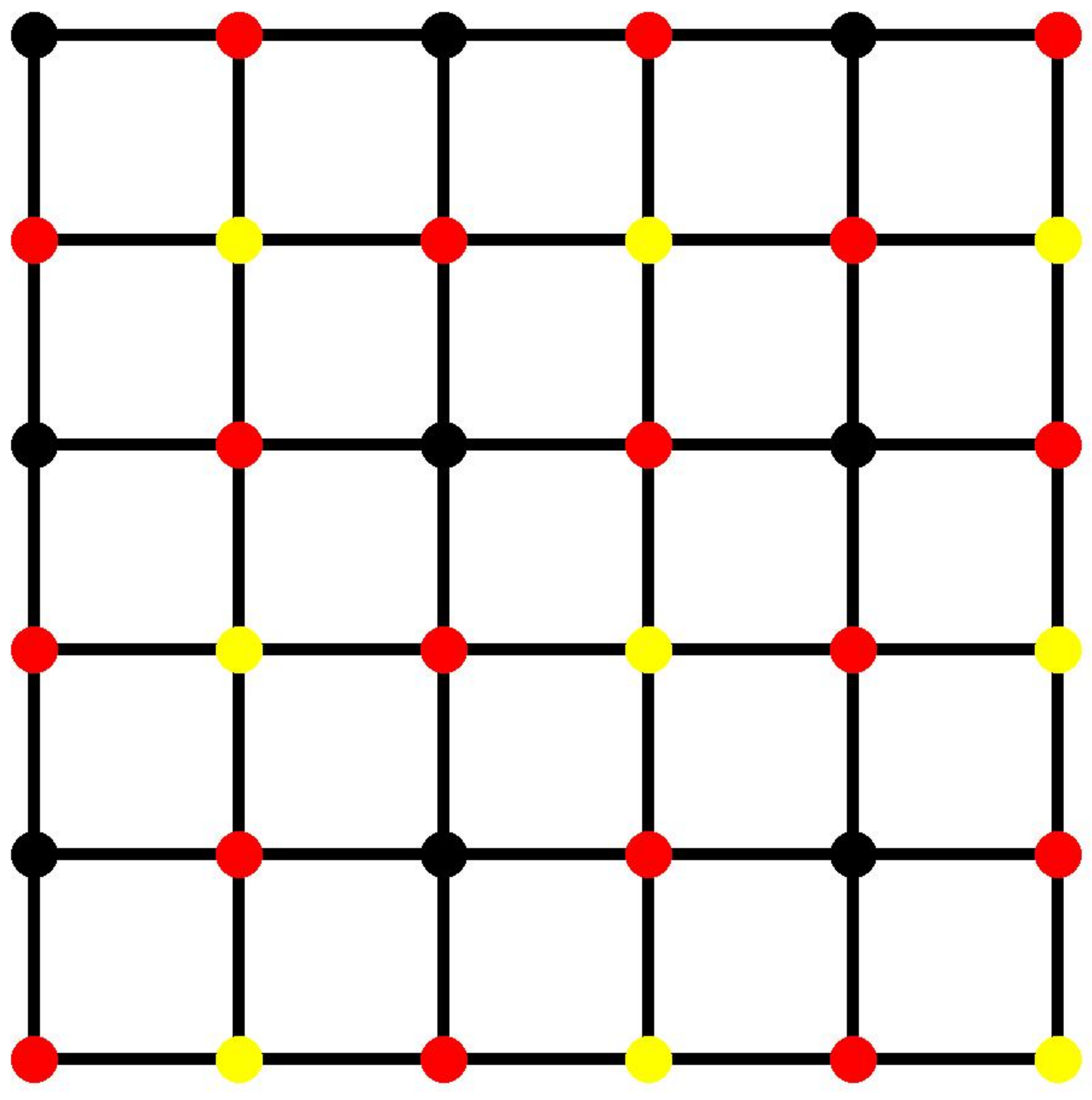}\qquad\qquad \includegraphics[width=4.5cm]{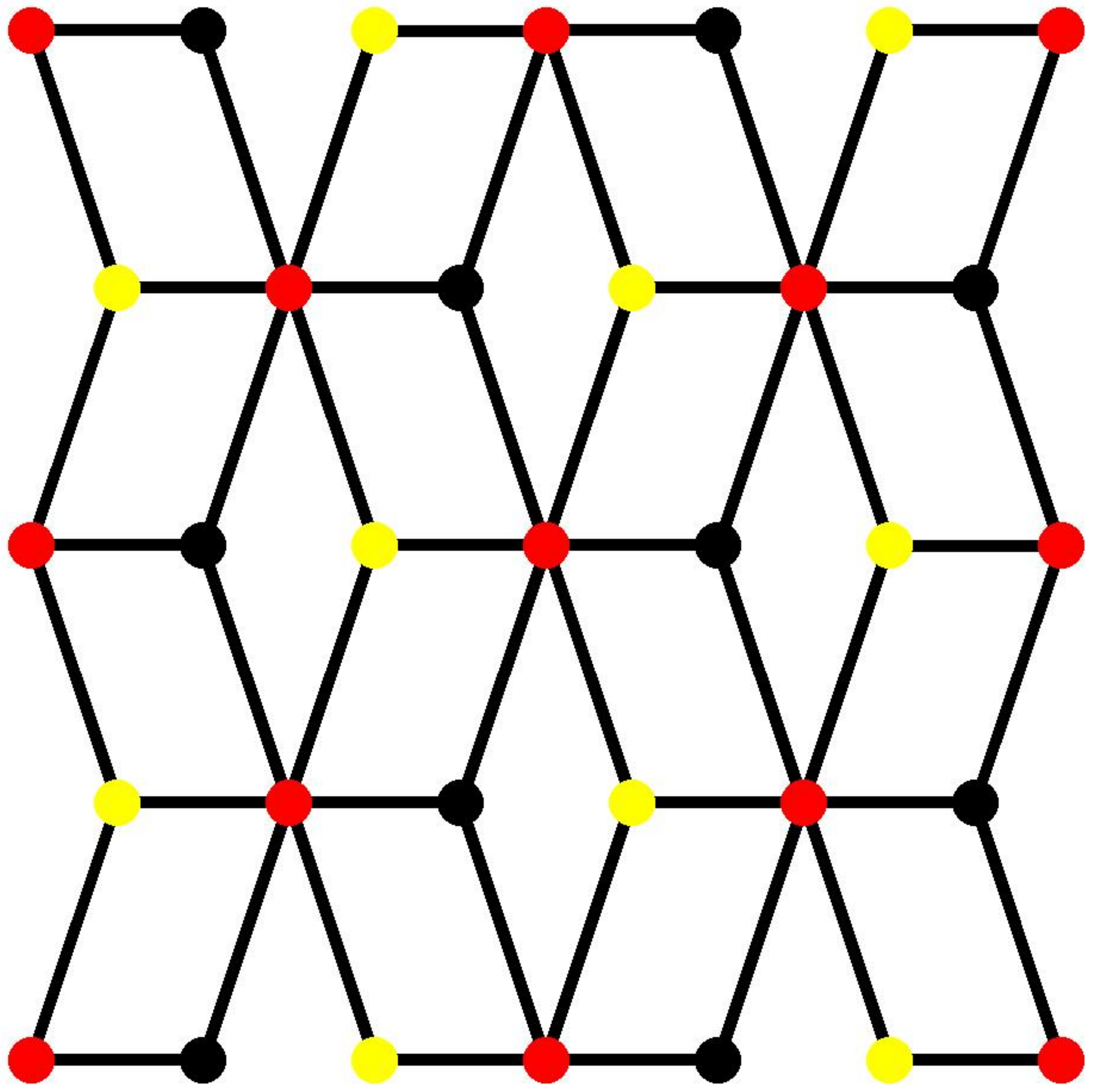}

\caption{Quadrangles for the matrices $\mav\big(u_1^{\nu_1}u_2^{\nu_2}u_3^{\nu_3}\big)$ in Example~\ref{exa:see weights}.}\label{fig:see quadrangles}
\end{figure}

\begin{Example}\label{exa:see weights} For $\cF_2$ and $\cF_3$ as in Figs.~\ref{fig:classic1} and \ref{fig:classic2} one has the weight realizations
\begin{gather*}
\left(\begin{matrix}
0&u_1^3u_2u_3^2+u_1u_2^3u_3^2\\
u_1u_2u_3^2+u_1^3u_2^3u_3^2&0
\end{matrix}\right)
\qquad \text{resp.}\quad u_1^3u_2u_3^3+u_1^4u_2^4u_3^3+u_1^2u_2^4u_3^3
\end{gather*}
The $2$-cells in these realizations are squares, resp.\ triangles with angles $\frac{\pi}{4}$, $\frac{\pi}{4}$, $\frac{\pi}{2}$. The marked points in the $2$-cells are their barycenters. The quadrangles for these weight realizations are shown in Fig.~\ref{fig:see quadrangles}.
\end{Example}

\subsection{Dessins d'enfants}\label{subsec:intro dessins}

The quadrangles in a weight realization constitute a tiling of the plane $\RR^2$. When taken modu\-lo~$\gL_\V$ the $s(\widetilde{e})t(\widetilde{e})$-diagonals and the $w(\widetilde{e})b(\widetilde{e})$-diagonals show the graphs $\gGL$ and $\gGLD$ embedded in the torus $\TT_\V$ and the duality between them.

\begin{Example}\label{exa:dual pictures}The two ways of putting diagonals in the left-hand picture in Fig.~\ref{fig:see quadrangles} yield the two pictures in Fig.~\ref{fig:classic1}. Both ways of putting diagonals in the right-hand picture in Fig.~\ref{fig:see quadrangles} lead to triangulations equivalent with the left-hand picture in Fig.~\ref{fig:classic2}.
\end{Example}

\begin{Example}\label{eq:exa:kagome} The Zhegalkin zebra function shown on the left in Fig.~\ref{fig:kagome} has no fractional matchings because $|\ospw|\neq |\ospb|$. Nonetheless if one takes the barycentres of the polygons, one finds the tiling by quadrangles as shown on the right in Fig.~\ref{fig:kagome}. The two ways of putting diagonals lead to respectively the left-hand picture in Fig.~\ref{fig:kagome} and the right-hand picture in Fig.~\ref{fig:classic2}.
\end{Example}

\begin{figure}[h!]\centering
\includegraphics{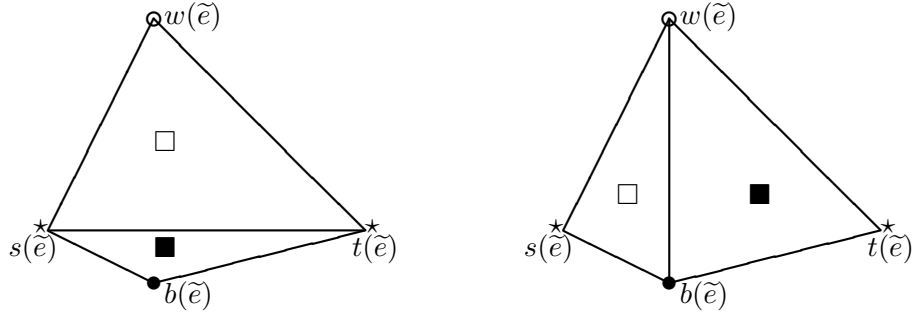}
\caption{Quadrangle for edge $\widetilde{e}$.}\label{fig:quadrangle}
\end{figure}

\begin{figure}[t]\centering
\includegraphics[width=5.5cm]{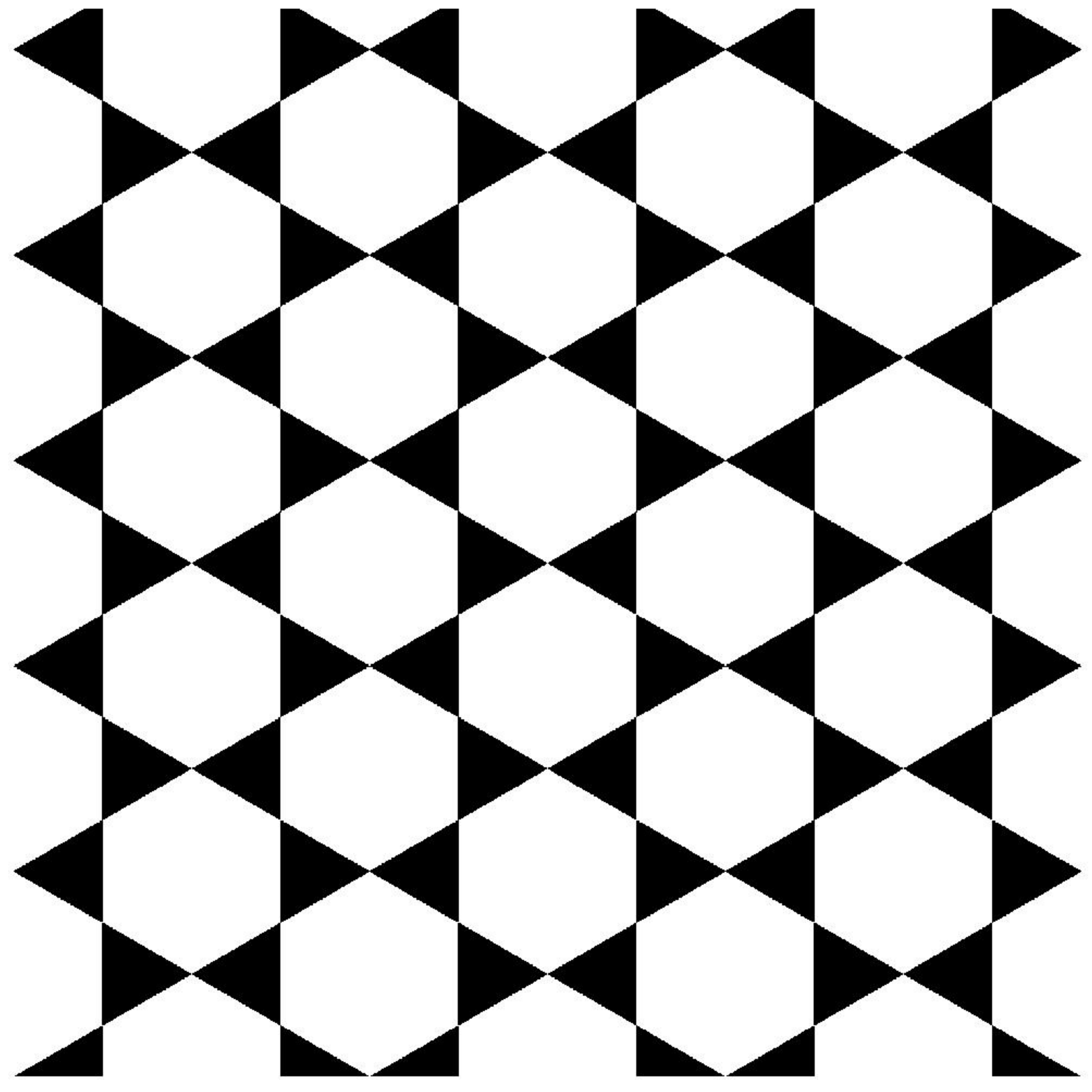}\qquad \includegraphics[width=6cm]{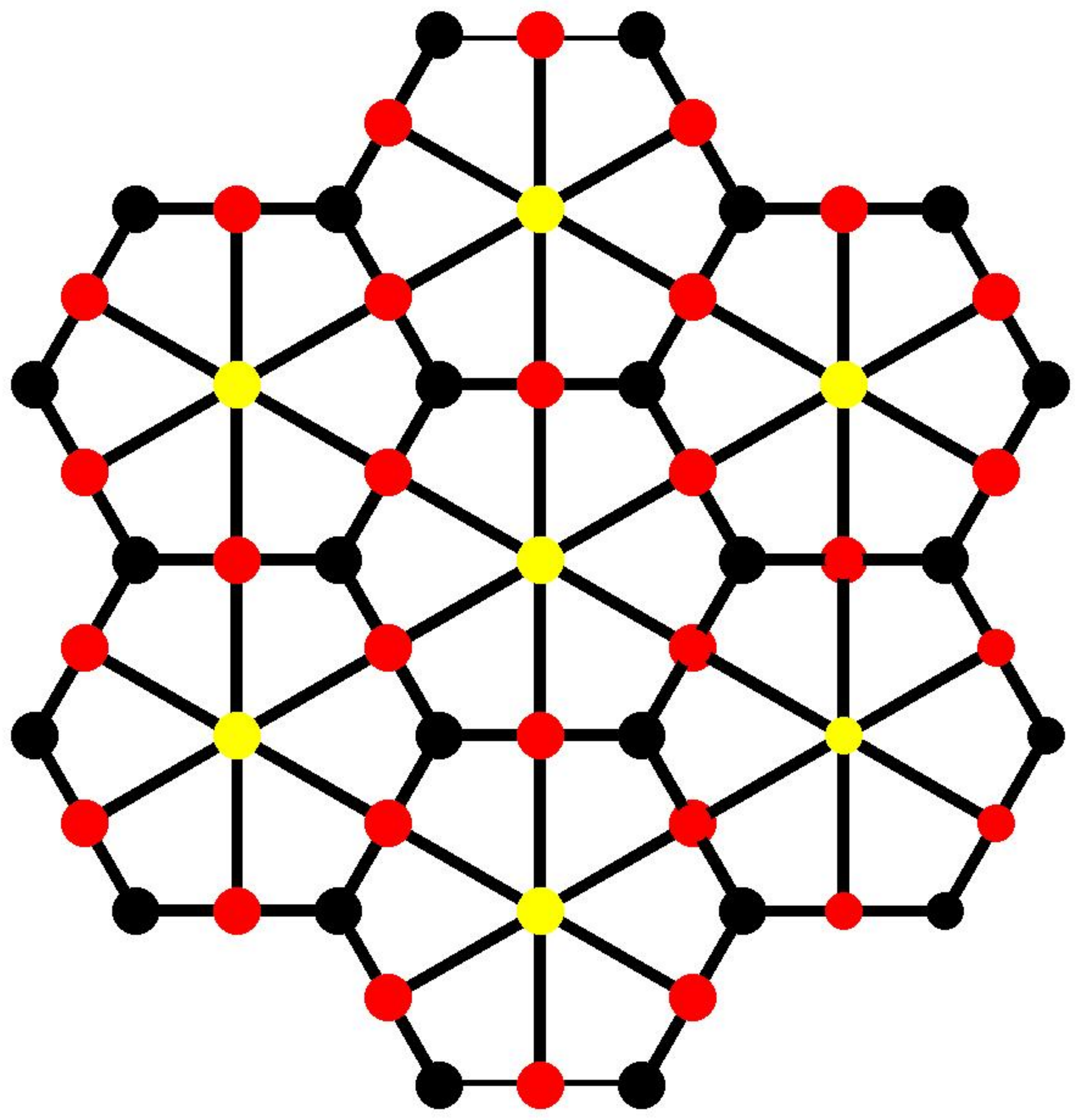}

$\cF=\ze{21}+\ze{41}+\ze{61}+\ze{62}$, $\gs_0=(1,4,5,3,2,6)$, $\gs_1=(1,2,5)(3,4,6)$.

\caption{A Zhegalkin zebra function without fractional matchings.}\label{fig:kagome}
\end{figure}

Each of its two diagonals divides a quadrangle into two triangles which we color black/white as indicated in Fig.~\ref{fig:quadrangle}. When the quadrangles are put together to make a tiling of the plane the colored triangles for the $s(\widetilde{e})t(\widetilde{e})$-diagonals fuse so as to form the black and white polygons in a tiling which we want to think of as \textit{the deformation, determined by $(\nu_1,\nu_2,\nu_3)$, of the tiling given by the Zhegalkin zebra function~$\cF$}.

The weight realization $(\nu_1,\nu_2,\nu_3)$ itself can be deformed by
\begin{gather}\label{eq:deform weights}
(\nu_1,\nu_2,\nu_3) \rightsquigarrow \big(N\nu_1+\nu'_1-\nu''_1, N\nu_2+\nu'_2-\nu''_2, N\nu_3+\nu'_3-\nu''_3\big),
\end{gather}
where $\nu'_1,\nu'_2,\nu'_3,\nu''_1,\nu''_2,\nu''_3\in\ocW$ are such that $\deg\nu'_j=\deg\nu''_j$ for $j=1,2,3$ and $N\in\ZZ_{\geq0}$ is so large that the positivity and strict convexity conditions are satisfied for the deformed triple.

The colored triangles for the $w(\widetilde{e})b(\widetilde{e})$-diagonals, on the other hand, make up a tiling of the plane $\RR^2$ by black and white triangles such that each triangle has one $\star$-vertex, one $\bullet$-vertex and one $\circ$-vertex. It is a well-known~\cite{LZ} that from such a triangulation one can construct a~branched covering $\cB\colon \TT_\V \rightarrow \CC\PP^1$ with precisely three branch points $0$, $1$, $\infty$:
\begin{gather*}
\cB\colon \ \TT_\V \longrightarrow \CC\PP^1 ,\qquad \cB(\ospw)=0,\qquad \cB(\ospb)=1,\qquad \cB(\ospv)=\infty .
\end{gather*}
This is where \textit{Zhegalkin zebra motives} meet \textit{dessins d'enfants}. In the works on dessins d'enfants on Riemann surfaces of genus~$1$ one wants to find on the torus a structure of elliptic curve over a number field such that the branched covering map is a morphism of varieties, called a~\textit{Belyi map}. We will not elaborate on dessins d'enfants, but refer instead to~\cite{LZ,Sc,VH}.

The map $\cB$ induces unramified coverings of $\CC\setminus\{0,1\} = \CC\PP^1\setminus\{0,1,\infty\}$:
\begin{gather*}
\TT_\V\setminus\bigl(\ospw,\ospb,\ospv\bigr) \stackrel{\cB}{\longrightarrow} \CC\setminus\{0,1\} \stackrel{\widetilde{\cB}}{\longleftarrow} \RR^2\setminus\bigl(\spw,\spb,\spv\bigr).
\end{gather*}
One can normalize the formulas describing $\widetilde{\cB}$ such that the $s(\widetilde{e})t(\widetilde{e})$-diagonals of the quadrangles are mapped to the line $\Re z=\breuk{1}{2}$ in $\CC$ while the midpoints of these diagonals are mapped to the point~$\breuk{1}{2}$. Every path in $\CC\setminus\{0,1\}$ starting at the point $\breuk{1}{2}$ can be lifted uniquely to a collection of paths in $\RR^2\setminus\bigl(\spw,\spb,\spv\bigr)$ starting at the midpoints of the $s(\widetilde{e})t(\widetilde{e})$-diagonals.

The fiber $\cB^{-1}\big(\breuk{1}{2}\big)$ can be identified with the set $\ocE$. \textit{The monodromy action of the fundamental group $\pi_1\bigl(\CC\setminus\{0,1\}, \breuk{1}{2}\bigr)$ on $\ocE$ is then exactly the permutation action described by the superpotential $\SFL=(\ocE,\gs_0,\gs_1)$.} This is illustrated in Fig.~\ref{fig:C01}.

\looseness=1 Lifting the figure-$\infty$-loop which starts at $\breuk{1}{2}$ in direction NW yields a collection of paths known as \textit{zigzags}. It is evident from Fig.~\ref{fig:C01} that these correspond to the orbits of the permuta\-tion~$\gs_1\gs_0$.

\begin{figure}[h!]\centering
\includegraphics{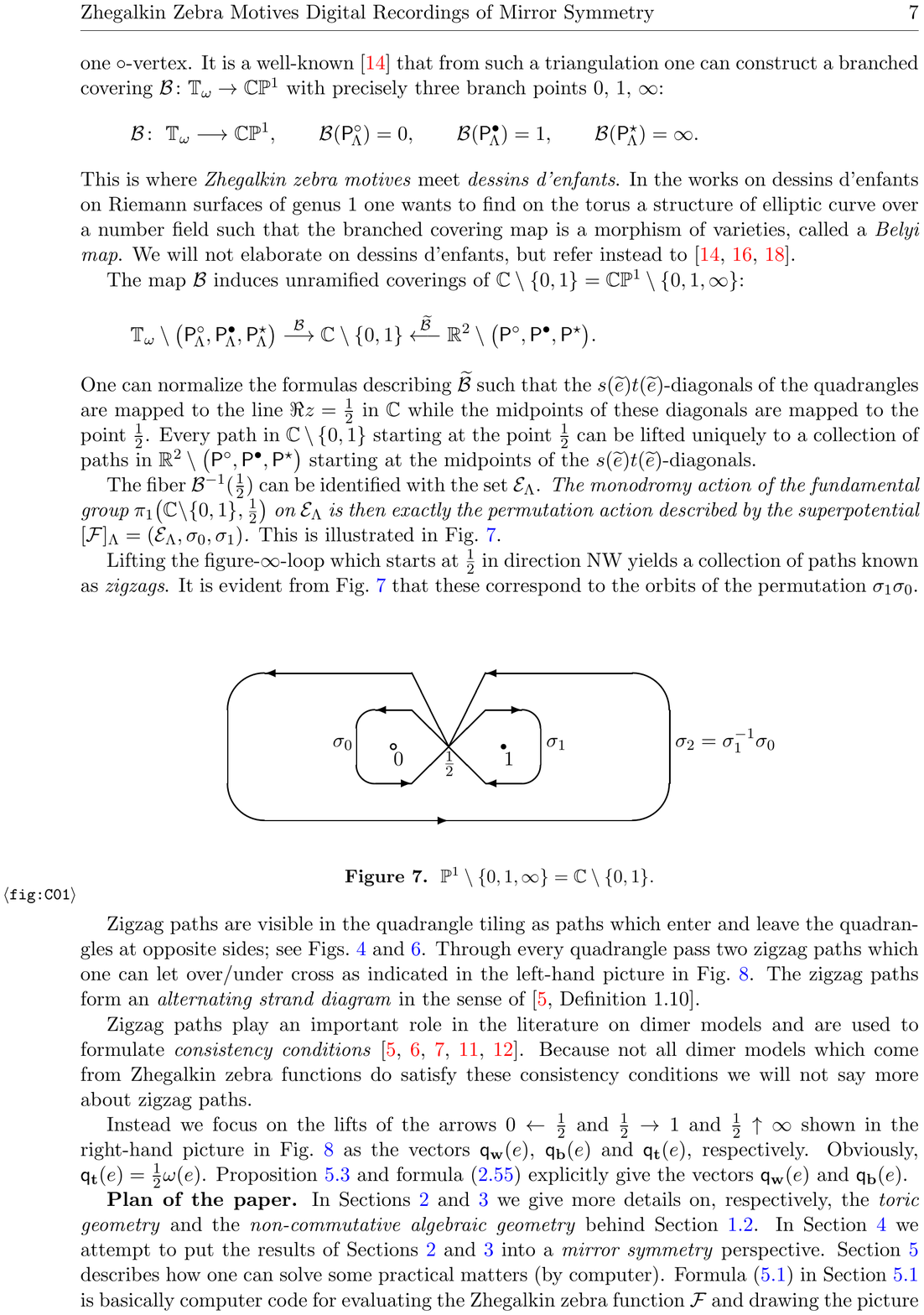}
\caption{$\PP^1\setminus\{0,1,\infty\} = \CC\setminus\{0,1\}$.}\label{fig:C01}
\end{figure}

Zigzag paths are visible in the quadrangle tiling as paths which enter and leave the quadrangles at opposite sides; see Figs.~\ref{fig:see quadrangles} and~\ref{fig:kagome}. Through every quadrangle pass two zigzag paths which one can let over/under cross as indicated in the left-hand picture in Fig.~\ref{fig:quadrangle3}. The zigzag paths form an \textit{alternating strand diagram} in the sense of~\cite[Definition~1.10]{Bo}.

Zigzag paths play an important role in the literature on dimer models and are used to formulate \textit{consistency conditions} \cite{Bo,Br,DHP,G,HS}. Because not all dimer models which come from Zhegalkin zebra functions do satisfy these consistency conditions we will not say more about zigzag paths.

Instead we focus on the lifts of the arrows $0\leftarrow\breuk{1}{2}$ and $\breuk{1}{2}\rightarrow 1$ and $\breuk{1}{2} \uparrow \infty$ shown in the right-hand picture in Fig.~\ref{fig:quadrangle3} as the vectors $\sfq_\bw(e)$, $\sfq_\bb(e)$ and $\sfq_\bt(e)$, respectively. Obviously, $\sfq_\bt(e)=\frac{1}{2}\V(e)$. Proposition \ref{prop:quadrangle} and formula~\eqref{eq:sfq} explicitly give the vectors $\sfq_\bw(e)$ and~$\sfq_\bb(e)$.

\begin{figure}[t]\centering
\includegraphics{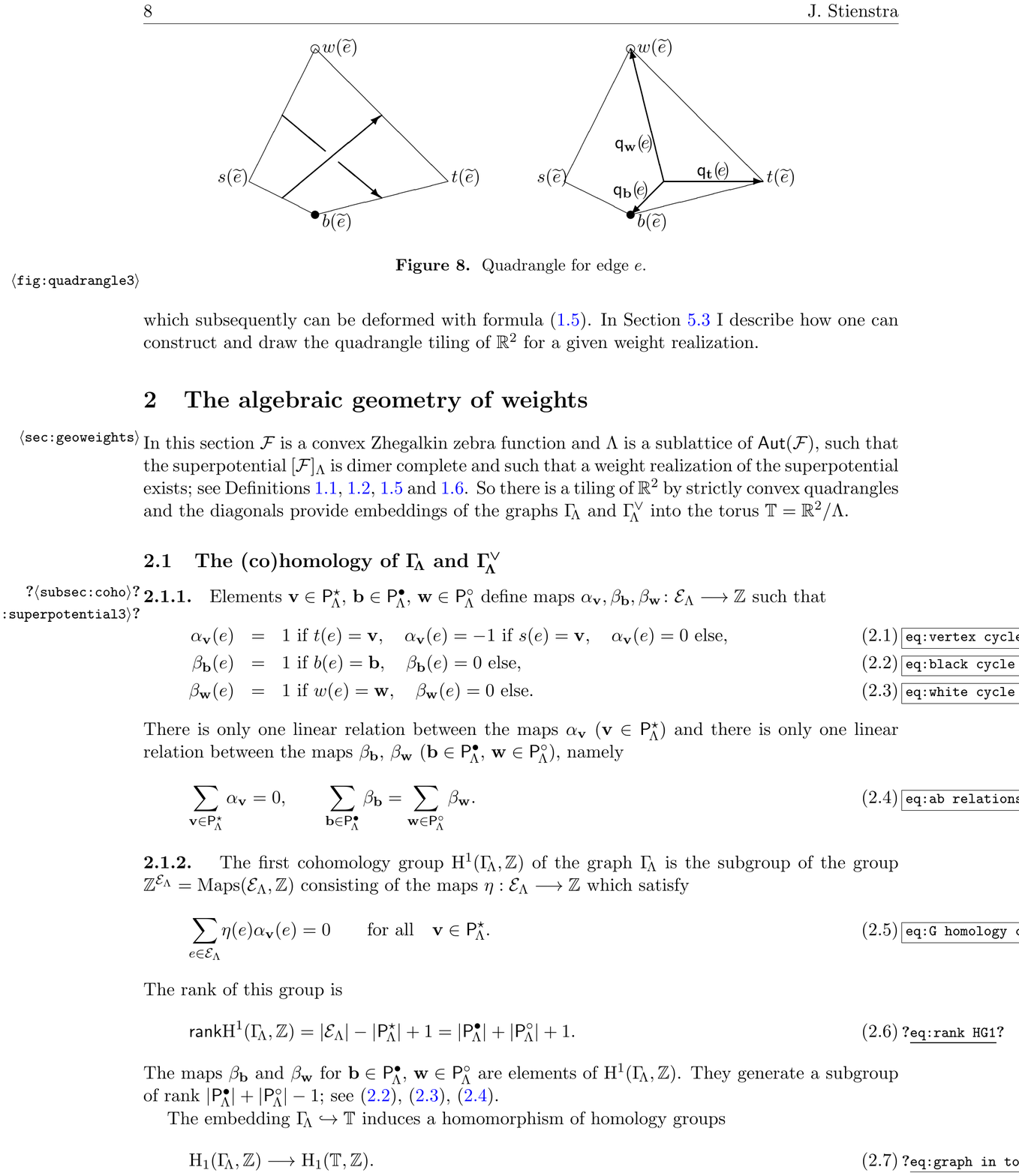}
\caption{Quadrangle for edge $e$.}\label{fig:quadrangle3}
\end{figure}

\textbf{Plan of the paper.} In Sections \ref{sec:geoweights} and \ref{sec:Jacobi} we give more details on, respectively, the \textit{toric geometry} and the \textit{non-commutative algebraic geometry} behind Section~\ref{subsec:intro superpotential}. In Section \ref{sec:mirror symmetry} we attempt to put the results of Sections~\ref{sec:geoweights} and~\ref{sec:Jacobi} into a \textit{mirror symmetry} perspective. Section~\ref{sec:practical} describes how one can solve some practical matters (by computer). Formula~\eqref{eq:evaluate} in Section~\ref{subsec:draw} is basically computer code for evaluating the Zhegalkin zebra function~$\cF$ and drawing the picture of the tiling. In Section~\ref{subsec:compot} I describe how one can compute the superpotential $\SFL$ and the realization~$\V_\cF$. From the superpotential one can easily determine all perfect matchings. It is described in the text between Definitions~\ref{def:complete} and~\ref{def:weight realization} how to obtain from this a weight realization, which subsequently can be deformed with formula~\eqref{eq:deform weights}. In Section~\ref{subsec:comlatquad} I describe how one can construct and draw the quadrangle tiling of $\RR^2$ for a given weight realization.

\section{The algebraic geometry of weights}\label{sec:geoweights}

In this section $\cF$ is a convex Zhegalkin zebra function and $\gL$ is a sublattice of $\auto(\cF)$, such that the superpotential $\SFL$ is dimer complete and such that a weight realization of the superpotential exists; see Definitions \ref{def:zzf}, \ref{def:superpotential}, \ref{def:complete} and~\ref{def:weight realization}. So there is a tiling of $\RR^2$ by strictly convex quadrangles and the diagonals provide embeddings of the graphs $\gGL$ and $\gGLD$ into the torus $\TT=\RR^2/\gL$.

\subsection[The (co)homology of $\gGL$ and $\gGLD$]{The (co)homology of $\boldsymbol{\gGL}$ and $\boldsymbol{\gGLD}$}\label{subsec:coho}

\pph{}\label{sec:superpotential3} Elements $\bv\in\ospv$, $\bb\in\ospb$, $\bw\in\ospw$ define maps $\ga_\bv, \gb_\bb, \gb_\bw\colon \ocE\longrightarrow\ZZ$ such that
\begin{gather}\label{eq:vertex cycle}
\ga_\bv(e) = 1 \quad \text{if} \quad t(e)=\bv ,\qquad \ga_\bv(e)=-1 \quad \text{if}\quad s(e)=\bv ,\qquad \ga_\bv(e)=0 \quad \text{else},\\
\label{eq:black cycle}
\gb_\bb(e) = 1\quad \text{if} \quad b(e)=\bb ,\qquad \gb_\bb(e)=0\quad \text{else},\\
\label{eq:white cycle}
\gb_\bw(e) = 1\quad \text{if} \quad w(e)=\bw ,\qquad \gb_\bw(e)=0\quad \text{else}.
\end{gather}
There is only one linear relation between the maps $\ga_\bv$ ($\bv\in\ospv$) and there is only one linear relation between the maps $\gb_\bb$, $\gb_\bw$ ($\bb\in\ospb$, $\bw\in\ospw$), namely
\begin{gather}\label{eq:ab relations}
\sum_{\bv\in\ospv}\ga_\bv = 0 ,\qquad\sum_{\bb\in\ospb}\gb_\bb = \sum_{\bw\in\ospw}\gb_\bw .
\end{gather}

\pph{} The first cohomology group $\ho^1(\gGL,\ZZ)$ of the graph $\gGL$ is the subgroup of the group $\ZZ^{\ocE}=\map(\ocE,\ZZ)$ consisting of the maps $\eta\colon \ocE\longrightarrow\ZZ$ which satisfy
\begin{gather}\label{eq:G homology cycle}
\sum_{e\in\ocE}\!\eta(e)\ga_\bv(e) = 0\qquad\textrm{for all}\quad \bv\in\ospv .
\end{gather}
The rank of this group is
\begin{gather*}
\rank \ho^1(\gGL, \ZZ) = |\ocE|-|\ospv|+1 = |\ospb|+|\ospw|+1 .
\end{gather*}
The maps $\gb_\bb$ and $\gb_\bw$ for $\bb\in\ospb$, $\bw\in\ospw$ are elements of $\ho^1(\gGL,\ZZ)$. They generate a subgroup of rank $|\ospb|+|\ospw|-1$; see \eqref{eq:black cycle}, \eqref{eq:white cycle} and \eqref{eq:ab relations}.

The embedding $\gGL\hookrightarrow\TT$ induces a homomorphism of homology groups
\begin{gather*}
\ho_1(\gGL, \ZZ)\longrightarrow\ho_1(\TT, \ZZ).
\end{gather*}
This homomorphism is surjective and its kernel is generated by the elements{\samepage
\begin{gather}\label{eq:face}
\cgb_\bb = \sum_{e\in\ocE}\gb_\bb(e) e\qquad\textrm{and}\qquad \cgb_\bw = \sum_{e\in\ocE}\gb_\bw(e) e
\end{gather}
for $\bb\in\ospb$ and $\bw\in\ospw$.}

\medskip

\pph{} A \textit{path of length $k$ on $\gGL$} is a sequence $\bp=(e_1,\ldots,e_k)$ in $\ocE$ such that $t(e_i)=s(e_{i+1})$ for $i=1,\ldots,k-1$. We define $s(\bp)=s(e_1)$, $t(\bp)=t(e_k)$. A~path $\bp$ on $\gGL$ is \textit{closed} if $s(\bp)=t(\bp)$. In case $k=0$ these are the constant paths supported on the vertices of $\gGL$.

The \textit{homology class} of a closed path $\bp=(e_1,\ldots,e_k)$ is
\begin{gather}\label{eq:loop class}
\ubp=e_1+\dots+e_k .
\end{gather}
$\ho_1(\gGL,\ZZ)$ is generated by the homology classes of closed paths on $\gGL$. Special closed paths on~$\gGL$ are given by the boundaries of the black and white polygons in the tiling. Their homology classes are $\cgb_\bb$ and $\cgb_\bw$ as in~\eqref{eq:face}. They generate a subgroup in $\ho_1(\gGL,\ZZ)$ of rank $|\ospb|+|\ospw|-1$.

\medskip

\pph{} The first cohomology group $\ho^1\big(\gGLD,\ZZ\big)$ of the graph $\gGLD$ is the subgroup of~$\ZZ^{\ocE}$ consisting of the maps $\theta\colon \ocE\longrightarrow\ZZ$ which satisfy
\begin{gather}\label{eq:homology cycle}
\forall\, \bw\in\ospw , \ \forall\, \bb\in\ospb \colon \quad \sum_{e\in\ocE,w(e)=\bw} \theta(e) = \sum_{e\in\ocE,b(e)=\bb} \theta(e) = 0 .
\end{gather}
Since there is only one linear relation between the equations in the system \eqref{eq:homology cycle} the rank of the cohomology group is
\begin{gather*}
\rank \ho^1\big(\gGLD, \ZZ\big) = |\ocE|-|\ospb|-|\ospw|+1 = |\ospv|+1 .
\end{gather*}
The maps $\ga_\bv$ for $\bv\in\ospv$ are elements of $\ho^1\big(\gGLD,\ZZ\big)$. They generate a subgroup of rank $|\ospv|-1$; see \eqref{eq:vertex cycle} and~\eqref{eq:ab relations}. The embedding $\gGLD\hookrightarrow\TT$ induces a homomorphism of homology groups
\begin{gather}\label{eq:D graph in torus}
\ho_1\big(\gGLD, \ZZ\big)\longrightarrow\ho_1(\TT, \ZZ).
\end{gather}
This homomorphism is surjective and its kernel is generated by the elements
\begin{gather}\label{eq:dual face}
\cga_\bv = \sum_{e\in\ocE}\ga_\bv(e)e\qquad\textrm{for}\quad\bv\in\ospv .
\end{gather}

\subsection[The geometry of $\protect{\Proj(\ZZ[\ocW])}$]{The geometry of $\boldsymbol{\Proj(\ZZ[\ocW])}$}\label{subsec:matching polytope}
In this section we investigate the geometry of the projective scheme $\Proj(\ZZ[\ocW])$, which by general constructions in algebraic geometry is associated with the graded semi-group $\ocW$ of integer weight functions for $\SFL$; see \cite[Chapter~II]{Ha}.

\medskip

\pph{}\label{pph:WGLD} By Definition \ref{def:weight} an integer weight function for the superpotential $\SFL$ is a map $\nu\colon \ocE\rightarrow\ZZ_{\geq0}$ which satisfies
\begin{gather}\label{eq:weight condition}
\forall\, \bw\in\ospw ,\ \forall\, \bb\in\ospb\colon \quad \sum_{w(e)=\bw} \nu(e) = \sum_{b(e)=\bb} \nu(e) = \deg \nu .
\end{gather}
From \eqref{eq:homology cycle} and \eqref{eq:weight condition} one sees that the difference $\nu-\nu'$ of two weight functions with the same degree is an element of~$\ho^1\big(\gGLD,\ZZ\big)$.

Conversely, if $\nu$ is a \textit{positive weight function} (i.e., $\nu(e)>0$ for all $e\in\ocE$) and $\theta$ is an element of~$\ho^1\big(\gGLD,\ZZ\big)$, then for all sufficiently large integers $N$ the function $N\nu-\theta$ is a positive weight function. Thus we find that
\begin{gather}\label{eq:rank W}
\rank \ocW = \rank \ho^1\big(\gGLD, \ZZ\big)+1 = |\ospv|+2 .
\end{gather}

\pph{}\label{pph:equivalence} On the semi-group $\ocW$ we define an equivalence relation $\sim$ by
\begin{gather}\label{eq:equivalence matchings}
\nu\sim\nu' \Leftrightarrow \exists\, r\colon \ \ospv\rightarrow\ZZ\quad\textrm{s.t.}\quad\nu-\nu'= \sum_{\bv\in\ospv} r_\bv\ga_\bv
\end{gather}
with $\ga_\bv$ as in \eqref{eq:vertex cycle}. We denote the set of equivalence classes by $\oocW$:
\begin{gather}\label{eq:Wsim}
\oocW = \ocW/{\sim}.
\end{gather}
This is a graded semi-group of rank $3$. The natural surjective homomorphism of semi-groups $\ocW\rightarrow\oocW$ is the analogue of the surjective homomorphism of groups $\ho_1\big(\gGLD, \ZZ\big)\rightarrow\ho_1(\TT, \ZZ)$ induced by the embedding $\gGLD\hookrightarrow\TT$; cf.~\eqref{eq:D graph in torus}.

\medskip

\pph{} Recall from Definition \ref{def:weight} that the integer weight functions of degree $1$ are called perfect matchings and that~$\ocM$ is the set of perfect matchings. We denote the set of equivalence classes for the relation~$\sim$ on $\ocM$ by~$\ocA$:
\begin{gather*}
\ocA = \ocM/{\sim}.
\end{gather*}

\begin{Definition}[{cf.~\cite[Sections~3.4 and~3.5]{GK}}]\label{def:match polytope} The convex hull $\conv(\ocM)$ of $\ocM$ in $\RR^{\ocE}$ is called the \emph{matching polytope of $\SFL$}. The elements of $\conv(\ocM)$ are called \emph{fractional matchings}. The convex hull $\conv(\ocA)$ of~$\ocA$ is called the \emph{Newton polygon of~$\SFL$}.
\end{Definition}

\begin{Proposition}[{cf.~\cite[Lemma 3.10]{GK}}]\label{prop:matching polytope}\quad
\begin{enumerate}\itemsep=0pt
\item[$(i)$] The matching polytope satisfies\begin{gather}\label{eq:matching polytope}
\conv(\ocM) = \left\{\begin{array}{@{}l@{}}
\text{maps $\theta\colon \ocE\longrightarrow\RR_{\geq0}$ s.t.\ for all $\bw\in\ospw$ and}\\
\textrm{all } \bb\in\ospb\colon \sum\limits_{e\in\bw} \theta(e) = \sum\limits_{e\in\bb} \theta(e) = 1
\end{array}\right\}.
\end{gather}
The set of its vertices is precisely the set of perfect matchings $\ocM$.
\item[$(ii)$]
The semi-group $\ocW$ is generated by the perfect matchings and the semi-group $\oocW$ is ge\-ne\-rated by the set $\ocA$:
\begin{gather}\label{eq:weight span}
\ocW = \ZZ_{\geq0}\ocM ,\qquad \oocW = \ZZ_{\geq0}\ocA .
\end{gather}
\item[$(iii)$]
The matching polytope $\conv(\ocM)$ has dimension $|\ospv|+1$ and the Newton polygon $\conv(\ocA)$ has dimension $2$.
\end{enumerate}
\end{Proposition}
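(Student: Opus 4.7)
My plan is to treat the three parts in the order (i), (ii), (iii): part (i) is a classical fractional matching polytope result for bipartite graphs, part (ii) is the integer decomposition (``integer Carath\'eodory'') property of that polytope, essentially coming from K\"onig's edge-colouring theorem, and part (iii) is a dimension computation from the rank formulas already established in Section~\ref{sec:geoweights}.

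For part (i), the inclusion $\conv(\ocM)\subseteq$(RHS of \eqref{eq:matching polytope}) is immediate, since every $m\in\ocM$ satisfies the linear constraints and these are preserved under convex combinations. The reverse inclusion uses that $\gGLD$ is bipartite with colour classes $\ospw$ and $\ospb$: the system of constraints $\sum_{w(e)=\bw}\theta(e)=1$ and $\sum_{b(e)=\bb}\theta(e)=1$ has a totally unimodular coefficient matrix (the vertex--edge incidence matrix of a bipartite graph), and a polytope cut out of $\RR_{\geq 0}^{\ocE}$ by such a TU system with integer right-hand side has only integer vertices. An integer point in the right-hand side of \eqref{eq:matching polytope} must lie in $\{0,1\}^{\ocE}$, and such a point is precisely the indicator of a perfect matching. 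Conversely, each $m\in\ocM$ is a vertex of the polytope by extremality in the positive orthant: if $m=\lambda\theta_1+(1-\lambda)\theta_2$ for $\theta_i$ in the polytope and $\lambda\in(0,1)$, then $m(e)=0$ forces $\theta_1(e)=\theta_2(e)=0$, and the row-sum constraints then force $\theta_1=\theta_2=m$.

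For part (ii), given $\nu\in\ocW$ of degree $d$, I would form the bipartite multigraph $G_\nu$ on vertex set $\ospw\sqcup\ospb$ with $\nu(e)$ parallel edges joining $w(e)$ to $b(e)$ for each $e\in\ocE$. The weight condition \eqref{eq:weight condition} makes $G_\nu$ exactly $d$-regular, and K\"onig's edge-colouring theorem for bipartite multigraphs supplies a proper $d$-edge-colouring. Because any two parallel copies of the same $e$ share both endpoints $w(e)$ and $b(e)$, they cannot appear together in a single colour class; therefore each colour class is the indicator of a genuine perfect matching $m_i\in\ocM$, and $\nu=m_1+\dots+m_d$. This proves $\ocW=\ZZ_{\geq 0}\ocM$, and passing to $\sim$-equivalence classes in \eqref{eq:equivalence matchings} gives $\oocW=\ZZ_{\geq 0}\ocA$.

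Part (iii) follows by pure dimension counting: from \eqref{eq:rank W} one has $\rank\ocW=|\ospv|+2$, and since $\conv(\ocM)$ is the affine slice of the cone $\RR_{\geq 0}\ocM$ at degree $1$ (this uses (ii)), its dimension equals $(|\ospv|+2)-1=|\ospv|+1$; similarly $\rank\oocW=3$ (recorded just after \eqref{eq:Wsim}), whence $\dim\conv(\ocA)=2$. The main subtlety I expect is the integrality step inside (ii)---one must argue that each K\"onig colour class, viewed back in $\ocE$ rather than in the edge set of $G_\nu$, really is $\{0,1\}$-valued. The dimer-completeness hypothesis is used throughout implicitly: it guarantees $\ocM\neq\varnothing$ and the existence of a strictly positive fractional matching, without which the rank formulas of Section~\ref{subsec:coho}--\ref{subsec:matching polytope} would degenerate and the dimension assertions in (iii) would fail.
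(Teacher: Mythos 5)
Your proposal is correct, and for the two substantive parts it diverges from the paper in instructive ways. For part~(i) the paper gives no argument at all --- it simply cites \cite[Lemma~3.10]{GK} --- whereas you supply the standard total-unimodularity proof for the bipartite incidence system; that is a legitimate way to make the statement self-contained. For part~(ii) the routes genuinely differ: the paper deduces (ii) \emph{from} (i) by a greedy peeling argument --- write $\nu=\sum_{\sm}r_{\nu,\sm}\sm$ as a convex combination scaled by $\deg\nu$, observe that integrality of $\nu$ together with $\sm(e)\in\{0,1\}$ upgrades $\nu(e)\geq r_{\nu,\sm}\sm(e)>0$ to $\nu(e)\geq\sm(e)$, subtract one perfect matching, and induct on the degree. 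You instead invoke K\"onig's edge-colouring theorem for the $d$-regular bipartite multigraph $G_\nu$, which decomposes $\nu$ into $d$ perfect matchings directly and independently of (i); your observation that parallel copies of the same $e\in\ocE$ cannot share a colour class, so each class is genuinely $\{0,1\}$-valued on $\ocE$, closes the one gap that argument has. Both are valid; the paper's peeling is shorter once (i) is granted, while yours is self-contained modulo K\"onig and makes the combinatorial content of dimer-completeness more visible. Part~(iii) is the same dimension count from $\rank\ocW=|\ospv|+2$ and $\rank\oocW=3$ in both treatments.
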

\begin{proof} $(i)$ This is Lemma 3.10 in \cite{GK}.

$(ii)$ Let $\nu\in\ocW$, $\nu\neq\nul$, be given. By $(i)$ there are non-negative real numbers $r_{\nu,\sm}$, $\sm\in\ocM$, such that
\begin{gather*}
\nu =\sum_{\sm\in\ocM} r_{\nu,\sm} \sm ,\qquad \sum_{\sm} r_{\nu,\sm} = \deg \nu.
\end{gather*}
Then $\nu(e)\geq r_{\nu,\sm} \sm(e)$ for all $\sm$ and $e$. Now take $\sm$ such that $r_{\nu,\sm}>0$. Then $\nu(e)-\sm(e)\geq 0$ for all $e\in\ocE$. This means that $\nu-\sm\in\ocW$. Note that $\deg(\nu-\sm)=\deg\nu-1$. If $\nu-\sm\neq\nul$ we repeat the preceding step with $\nu-\sm$ instead of $\nu$. After finitely many steps we arrive at the situation that $\nu$ minus some linear combination of perfect matchings with positive integer coefficients is~$0$. This result passes well to $\sim$-equivalence classes.

$(iii)$ This follows from \eqref{eq:rank W}, \eqref{eq:matching polytope} and \eqref{eq:weight span}.
\end{proof}

\pph{} General constructions in algebraic geometry associate with the graded semi-groups~$\ocW$ and~$\oocW$ the projective schemes $\Proj(\ZZ[\ocW])$ and $\Proj(\ZZ[\oocW])$; see \cite[Chapter~II]{Ha}. The natural surjective homomorphism of semi-groups $\ocW\rightarrow\oocW$ becomes an inclusion as closed subscheme
\begin{gather*}
\Proj\big(\ZZ[\oocW]\big) \subset \Proj(\ZZ[\ocW]) .
\end{gather*}

\pph{} As a consequence of \eqref{eq:weight span} we have a surjective homomorphism of rings
\begin{gather}\label{eq:weight ring}
\ZZ[X_\sm\,|\,\sm\in\ocM]\longrightarrow\ZZ[\ocW] ,\qquad X_\sm\mapsto X^\sm
\end{gather}
from the polynomial ring in the variables $X_\sm$, $\sm\in\ocM$, to the semi-group ring of $\ocW$; here~$X^\sm$ denotes the element of $\ZZ[\ocW]$ which corresponds to $\sm\in\ocW$. The kernel of the homomorphism~\eqref{eq:weight ring} is the ideal generated by the polynomials
\begin{gather}\label{eq:binomials}
\prod_{\sm\colon \mu(\sm)>0} X_{\sm}^{\mu(\sm)} - \prod_{\sm\colon \mu(\sm)<0} X_{\sm}^{-\mu(\sm)}\qquad\textrm{for}\quad \mu\in\MM,
\end{gather}
where $\MM$ denotes the lattice of $\ZZ$-linear relations between the perfect matchings:
\begin{gather}\label{eq:matchrel}
\MM=\bigg\{\mu\in\ZZ^{\ocM}\,|\, \sum_{\sm\in\ocM}\mu(\sm)\sm=0\bigg\}.
\end{gather}
It follows that $\Proj(\ZZ[\ocW])$ can be identified with the closed subscheme of the projective space
$\PP^{|\ocM|-1}=\Proj(\ZZ[X_\sm\,|\,\sm\in\ocM])$ given by the homogeneous equations
\begin{gather}\label{eq:binomial eqs}
\prod_{\sm\colon \mu(\sm)>0} X_{\sm}^{\mu(\sm)} = \prod_{\sm\colon \mu(\sm)<0} X_{\sm}^{-\mu(\sm)}\qquad\textrm{for}\quad \mu\in\MM.
\end{gather}

\pph{}\label{pph:points} A perfect matching $\sm_0$ defines an open subscheme of $\Proj(\ZZ[\ocW])$, namely the affine scheme
$\Spec\big(\ZZ\big[\ocW^0[-\sm_0]\big]\big)$ given by the semi-group
\begin{gather*}
\ocW^0[-\sm_0] = \big\{\nu-(\deg\nu)\sm_0\in\ZZ^{\ocE}\,|\,\nu\in\ocW\big\}
\end{gather*}
(cf.~\cite[Chapter~II, Proposition~2.5]{Ha}). This is a sub-semi-group of $\ho_1\big(\gGLD, \ZZ\big)$. The schemes $\Spec\big(\ZZ\big[\ocW^0[-\sm_0]\big]\big)$ for $\sm_0\in\ocM$ form a covering of $\Proj(\ZZ[\ocW])$ by affine open subschemes. Their intersection is $\Spec\big(\ZZ\big[\ho_1\big(\gGLD, \ZZ\big)\big]\big)$.

\begin{Proposition}\label{prop:subtori} The following diagram is commutative
\begin{gather*}
\begin{array}{@{}c@{\,\,}c@{\,\,}c@{}}
\ho^1(\TT, \CC^*)&\hookrightarrow&\Proj\big(\ZZ[\oocW]\big)
\\
\downarrow&&\downarrow
\\
\ho^1\big(\gGLD, \CC^*\big)&\hookrightarrow&\Proj(\ZZ[\ocW]).
\end{array}
\end{gather*}
The $(|\ospv|+1)$-dimensional complex torus $\ho^1\big(\gGLD, \CC^*\big)$ is an open subset in the set of complex points of the $(|\ospv|+1)$-dimensional scheme $\Proj(\ZZ[\ocW])$.

The $2$-dimensional complex torus $\ho^1(\TT, \CC^*)$ is an open subset in the set of complex points of $2$-dimensional scheme $\Proj(\ZZ[\oocW])$.
\end{Proposition}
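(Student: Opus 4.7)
The plan is to realise both tori as open affine subsets of the corresponding Proj schemes via the standard $\Spec$ covering, and to read off commutativity from the compatibility of $\ocW\twoheadrightarrow\oocW$ with the embedding $\gGLD\hookrightarrow\TT$. For the bottom row, Proposition~\ref{prop:matching polytope} gives $\ocW=\ZZ_{\geq 0}\ocM$, so the affine opens $U_{\sm_0}=\Spec(\ZZ[\ocW^0[-\sm_0]])$ for $\sm_0\in\ocM$ cover $\Proj(\ZZ[\ocW])$; by the paragraph containing~\eqref{eq:rank W} their intersection is $\Spec(\ZZ[\ho^1(\gGLD,\ZZ)])$, an algebraic torus of rank $|\ospv|+1$ whose group of $\CC$-points is $\Hom(\ho^1(\gGLD,\ZZ),\CC^*)=\ho^1(\gGLD,\CC^*)$. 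The dimension count $\rank\ocW-1=|\ospv|+1$ from~\eqref{eq:rank W} then finishes the bottom row.

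For the top row I would run the same recipe for $\oocW$: by Proposition~\ref{prop:matching polytope}, $\oocW=\ZZ_{\geq 0}\ocA$, so the affine opens $\Spec(\ZZ[\oocW^0[-\ol{\sm_0}]])$ for $\ol{\sm_0}\in\ocA$ cover $\Proj(\ZZ[\oocW])$, with intersection $\Spec(\ZZ[\oocW^{0,\mathrm{gp}}])$. By~\eqref{eq:equivalence matchings}, $\oocW^{0,\mathrm{gp}}$ is the quotient of $\ho^1(\gGLD,\ZZ)$ by the subgroup spanned by the $\ga_\bv$; identifying each function $\ga_\bv$ with the $1$-chain $\cga_\bv=\sum_e\ga_\bv(e)\,e$ of~\eqref{eq:dual face} and invoking the description of the kernel of~\eqref{eq:D graph in torus}, this quotient is $\ho_1(\TT,\ZZ)\cong\ZZ^2$. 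Hence the $\CC$-points form the $2$-torus $\Hom(\ho_1(\TT,\ZZ),\CC^*)=\ho^1(\TT,\CC^*)$, consistent with $\dim\Proj(\ZZ[\oocW])=\rank\oocW-1=2$.

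Commutativity of the square is then formal: both vertical embeddings are induced by the semi-group surjection $\ocW\twoheadrightarrow\oocW$, which on degree-zero group completions is the surjection $\ho^1(\gGLD,\ZZ)\twoheadrightarrow\ho_1(\TT,\ZZ)$ coming from $\gGLD\hookrightarrow\TT$; applying $\Hom(-,\CC^*)$ reverses it to the required horizontal injection of tori, so right-then-down equals down-then-right. The main subtlety, and the only non-formal step, is the identification $\oocW^{0,\mathrm{gp}}\cong\ho_1(\TT,\ZZ)$: one must verify, from~\eqref{eq:vertex cycle}, \eqref{eq:dual face} and the discussion surrounding \eqref{eq:D graph in torus}, that the $\ga_\bv$'s are precisely the $1$-boundaries of the $2$-cells in the CW decomposition of $\TT$ with $1$-skeleton $\gGLD$ and that the quotient carries no torsion. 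The remaining bookkeeping with signs and orientations is routine.
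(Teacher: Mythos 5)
Your argument is correct and follows essentially the same route the paper takes: the paper gives no separate proof of Proposition~\ref{prop:subtori}, relying instead on the immediately preceding discussion (Section~\ref{pph:points}), which exhibits the affine cover by $\Spec\big(\ZZ\big[\ocW^0[-\sm_0]\big]\big)$ with common intersection $\Spec\big(\ZZ\big[\ho_1\big(\gGLD,\ZZ\big)\big]\big)$, together with the rank computation~\eqref{eq:rank W} and the identification of the kernel of~\eqref{eq:D graph in torus} with the subgroup generated by the $\cga_\bv$. Your write-up just makes these steps explicit, including the one point the paper also leaves implicit, namely that the quotient of $\ho_1\big(\gGLD,\ZZ\big)$ by the $\ga_\bv$ is the torsion-free group $\ho_1(\TT,\ZZ)\cong\ZZ^2$.
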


\begin{Remark} The semi-group $\ocW^0[-\sm_0]$ is generated by the elements $\sm_1-\sm_0$ with $\sm_1$ a~perfect matching $\neq\sm_0$. By Proposition~\ref{prop:matching polytope} these are precisely the vectors along the edges of the matching polytope $\conv(\ocM)$ incident to the vertex~$\sm_0$. Therefore the duals of the semi-groups $\ocW^0[-\sm_0]$ for $\sm_0\in\ocM$ give precisely the maximal cones in the fan associated with the matching polytope $\conv(\ocM)$ by the construction in \cite[p.~26]{Fu}. Thus $\Proj(\ZZ[\ocW])$ can also be obtained with standard toric geometry constructions from the fan of outward pointing vectors to the matching polytope $\conv(\ocM)$.
\end{Remark}

\pph{}\label{pph:GK} We recall from \cite{GK} the construction of the skew symmetric bilinear form $\vge$ on $\ho_1\big(\gGLD, \ZZ\big)$ which gives the Poisson structure on the group ring $\ZZ\big[\ho_1\big(\gGLD, \ZZ\big)\big]$. In order to facilitate the exposition we reproduce Fig.~38 and formula~(65) of~\cite{GK} in our Fig.~\ref{fig:GK figure}. The definition, in \cite[Lemma~8.1 and Fig.~38]{GK}, of the local pairing at a white node $\bw$ of the graph $\gGLD$ can be phrased as follows. Let $e$, $e'$ and $e''$ in $\ocE$ be such that $w(e)=w(e')=w(e'')=\bw$. Let $\vec{E}$, $\vec{E}'$, $\vec{E}''$ be the edges of the graph $\gGLD$ dual to $e$, $e'$, $e''$, respectively, and pointing away from the vertex $\bw$. Write the cycle of $\gs_0$ which corresponds to $\bw$ as $(e_1,\ldots,e_q)$ with $e_1=e$ and let $e'=e_j$ and $e''=e_h$. Then formula~(65) in~\cite{GK} can be stated as
\begin{gather}\label{eq:GK65w}
\delta_\bw\bigl(\big(\vec{E}'-\vec{E}\big)\wedge\big(\vec{E}''-\vec{E}\big)\bigr) = \breuk{1}{2}\sign(h-j) .
\end{gather}
A similar formula holds for the local pairing at a black node $\bb$ of $\gGLD$, but since in our convention the boundaries of the black polygons are oriented clockwise, there is an extra $-$-sign:
\begin{gather}\label{eq:GK65b}
\delta_\bb\bigl(\big(\vec{E}'-\vec{E}\big)\wedge\big(\vec{E}''-\vec{E}\big)\bigr) = -\breuk{1}{2}\sign(h-j) .
\end{gather}
Definition~8.2 in~\cite{GK} builds the skew symmetric bilinear form $\vge$ on $\ho_1\big(\gGLD, \ZZ\big)$ from these local pairings. For reasons that will become clear in \eqref{eq:form m 5} we denote this form as $\gep$. The defining formula in \cite{GK} can then be stated as
\begin{gather}\label{eq:GK8.2}
\gep = \sum_{\bw\in\ospw}\delta_\bw - \sum_{\bb\in\ospb}\delta_\bb .
\end{gather}

\begin{figure}[t]\centering
\includegraphics{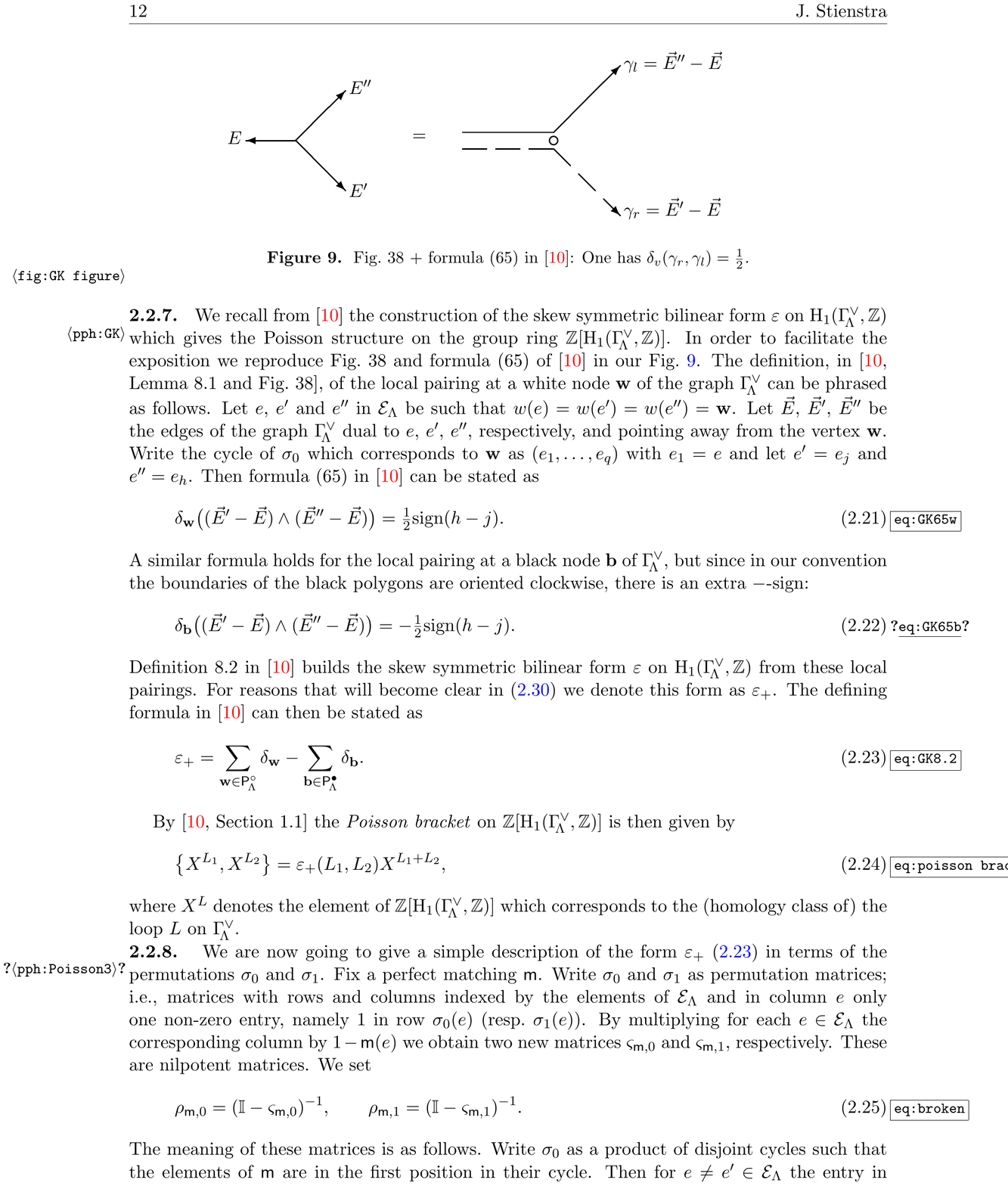}
\caption{Fig.~38 + formula (65) in \cite{GK}: One has $\delta_v(\gamma_r,\gamma_l)=\frac{1}{2}$.}\label{fig:GK figure}
\end{figure}

By \cite[Section~1.1]{GK} the \textit{Poisson bracket} on $\ZZ\big[\ho_1\big(\gGLD, \ZZ\big)\big]$ is then given by
\begin{gather}\label{eq:poisson bracket1}
\bigl\{X^{L_1}, X^{L_2}\bigr\}=\gep(L_1,L_2) X^{L_1+L_2} ,
\end{gather}
where $X^{L}$ denotes the element of $\ZZ\big[\ho_1\big(\gGLD, \ZZ\big)\big]$ which corresponds to the (homology class of) the loop $L$ on $\gGLD$.

\medskip

\pph{}\label{pph:Poisson3} We are now going to give a simple description of the form $\gep$ \eqref{eq:GK8.2} in terms of the permutations $\gs_0$ and $\gs_1$. Fix a perfect matching $\sm$. Write $\gs_0$ and $\gs_1$ as permutation matrices; i.e., matrices with rows and columns indexed by the elements of $\ocE$ and in column $e$ only one non-zero entry, namely $1$ in row $\gs_0(e)$ (resp.~$\gs_1(e)$). By multiplying for each
$e\in\ocE$ the corresponding column by~$1-\sm(e)$ we obtain two new matrices $\vgs_{\sm,0}$ and $\vgs_{\sm,1}$, respectively. These are nilpotent matrices. We set
\begin{gather}\label{eq:broken}
\rho_{\sm,0}=(\II-\vgs_{\sm,0})^{-1} ,\qquad \rho_{\sm,1}=(\II-\vgs_{\sm,1})^{-1} .
\end{gather}
The meaning of these matrices is as follows. Write $\gs_0$ as a product of disjoint cycles such that the elements of $\sm$ are in the first position in their cycle. Then for $e\neq e'\in\ocE$ the entry in column $e$ and row $e'$ in the matrix $\rho_{\sm,0}$ is $1$ if and only if $e$ and $e'$ are in the same cycle of $\gs_0$ with $e$ to the left of $e'$. And similarly for $\gs_1$ and $\rho_{\sm,1}$.

Let $\sm'$, $\sm''$ be two perfect matchings. Then $\sum\limits_{e\in\ocE}(\sm'(e)-\sm(e)) e$ and $\sum\limits_{e\in\ocE}(\sm''(e)-\sm(e)) e$ are elements of $\ho_1\big(\gGLD, \ZZ\big)$. It follows from \eqref{eq:GK65w}--\eqref{eq:GK8.2} that
\begin{gather*}
\gep(\sm'-\sm,\sm''-\sm) = \breuk{1}{2}{\sm'}^t\big(\rho_{\sm,0}-\rho_{\sm,0}^t+ \rho_{\sm,1}-\rho_{\sm,1}^t\big)\sm'',
\end{gather*}
where, for simplicity of notation, we have written on the left-hand side $\sm'-\sm$ and $\sm''-\sm$ for $\sum\limits_{e\in\ocE}(\sm'(e)-\sm(e)) e$ and $\sum\limits_{e\in\ocE}(\sm''(e)-\sm(e)) e$, respectively, while on the right-hand side we view~$\sm'$ and~$\sm''$ as column vectors.

By bilinearity this extends to all perfect matchings $\sm_1$, $\sm_2$, $\sm_3$, $\sm_4$:
\begin{gather}\label{eq:form m 2}
\gep(\sm_1-\sm_2,\sm_3-\sm_4) = \breuk{1}{2} (\sm_1-\sm_2)^t\big(\rho_{\sm,0}-\rho_{\sm,0}^t+\rho_{\sm,1}-\rho_{\sm,1}^t\big)(\sm_3-\sm_4).
\end{gather}
In the matrix $\rho_{\sm,0}+\rho_{\sm,0}^t-\II$ the $(e,e')$-entry is $1$ if $e$ and $e'$ sit in the same cycle of~$\gs_0$ and is $0$ otherwise. Consequently, $\bigl(\rho_{\sm,0}+\rho_{\rho,0}^t-\II\bigr)(\sm_3-\sm_4)=0$ and
\begin{gather}\label{eq:form m 3}
\breuk{1}{2}\bigl(\rho_{\sm,0}-\rho_{\sm,0}^t\bigr)(\sm_3-\sm_4)=\bigl(-\breuk{1}{2}\II+\rho_{\sm,0}\bigr)(\sm_3-\sm_4).
\end{gather}
Similarly
\begin{gather}\label{eq:form m 4}
\breuk{1}{2}\bigl(\rho_{\sm,1}-\rho_{\sm,1}^t\bigr)(\sm_3-\sm_4)=\bigl({-}\breuk{1}{2}\II+\rho_{\sm,1}\bigr)(\sm_3-\sm_4).
\end{gather}
Since $\ho_1\big(\gGLD, \ZZ\big)$ is the subgroup of $\ZZ^{\ocE}$ which is generated by the differences of pairs of perfect matchings we conclude from \eqref{eq:form m 2}--\eqref{eq:form m 4}:

\begin{Proposition}\label{prop:poisson m} The bilinear form $\gep$ on $\ho_1\big(\gGLD, \ZZ\big)$ is the restriction of the bilinear form on~$\ZZ^{\ocE}$ associated with the matrix $\rho_{\sm,0}+\rho_{\sm,1}-\II $:
\begin{gather}\label{eq:form m 5}
\forall\, \bh_1, \bh_2\in\ho_1\big(\gGLD, \ZZ\big)\colon \quad \gep(\bh_1,\bh_2) = \bh_1^t \bigl(\rho_{\sm,0}+\rho_{\sm,1}-\II\bigr) \bh_2.
\end{gather}
This holds for every perfect matching $\sm$.
\end{Proposition}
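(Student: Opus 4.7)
The plan is to derive the formula by combining the three identities \eqref{eq:form m 2}, \eqref{eq:form m 3}, \eqref{eq:form m 4} already in hand, and then extending the resulting equality from differences of perfect matchings to the whole of $\ho_1(\gGLD,\ZZ)$. The underlying substantive input has already been extracted from \cite{GK}: the local pairing formulas \eqref{eq:GK65w} and \eqref{eq:GK65b} have been repackaged into the matrix identity \eqref{eq:form m 2}, and the combinatorial observation that $\rho_{\sm,c}+\rho_{\sm,c}^t-\II$ is the incidence matrix of ``being in the same cycle of $\gs_c$'' has been used to pass from \eqref{eq:form m 2} to \eqref{eq:form m 3}--\eqref{eq:form m 4}. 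What is left is a routine but careful assembly.

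First, I would add \eqref{eq:form m 3} and \eqref{eq:form m 4} and substitute into \eqref{eq:form m 2} to obtain, for any four perfect matchings $\sm_1,\sm_2,\sm_3,\sm_4$,
\begin{gather*}
\gep(\sm_1-\sm_2,\sm_3-\sm_4)= (\sm_1-\sm_2)^t \bigl(\rho_{\sm,0}+\rho_{\sm,1}-\II\bigr)(\sm_3-\sm_4).
\end{gather*}
Note that the two $-\tfrac12\II$ contributions from \eqref{eq:form m 3} and \eqref{eq:form m 4} combine to the single $-\II$ appearing in the statement; the factor $\tfrac12$ from \eqref{eq:form m 2} has been absorbed on both sides.

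Next I would invoke the fact, recorded in the paragraph preceding the Proposition, that $\ho_1(\gGLD,\ZZ)$ is generated as an abelian group by the differences $\sm'-\sm$ of pairs of perfect matchings. Since both sides of the desired identity \eqref{eq:form m 5} are $\ZZ$-bilinear in $\bh_1,\bh_2\in\ZZ^{\ocE}$, agreement on this generating set forces agreement on all of $\ho_1(\gGLD,\ZZ)\times\ho_1(\gGLD,\ZZ)$.

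The only point that deserves a moment of care, and which I would state explicitly rather than gloss over, is that the identity holds for \emph{every} choice of reference matching $\sm$: although the matrices $\rho_{\sm,0}$ and $\rho_{\sm,1}$ depend on $\sm$ through the chosen ordering of cycles, the bilinear form they define on $\ho_1(\gGLD,\ZZ)$ does not, as it equals the intrinsic form $\gep$ from \eqref{eq:GK8.2}. I do not anticipate a genuine obstacle: the real conceptual work was done in arriving at \eqref{eq:form m 2}--\eqref{eq:form m 4}, and the Proposition is essentially a reformulation that makes the role of the permutation data transparent.
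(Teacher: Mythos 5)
Your proposal is correct and follows essentially the same route as the paper: the paper's own argument in Section~\ref{pph:Poisson3} consists precisely of adding \eqref{eq:form m 3} and \eqref{eq:form m 4}, substituting into \eqref{eq:form m 2} to get $\gep(\sm_1-\sm_2,\sm_3-\sm_4)=(\sm_1-\sm_2)^t(\rho_{\sm,0}+\rho_{\sm,1}-\II)(\sm_3-\sm_4)$, and then concluding by bilinearity from the fact that $\ho_1\big(\gGLD,\ZZ\big)$ is generated by differences of pairs of perfect matchings. Your bookkeeping of the factor $\frac{1}{2}$ and your remark on independence of the reference matching $\sm$ both match the paper's treatment.
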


\pph{} The difference of the matrices $-\breuk{1}{2}\II+\rho_{\sm,0}$ and $-\breuk{1}{2}\II+\rho_{\sm,1}$ induces another anti-symmetric bilinear form $\gem$ on $\ho_1\big(\gGLD, \ZZ\big)$:
\begin{gather}\label{eq:form m 6}
\forall\, \bh_1, \bh_2\in\ho_1\big(\gGLD, \ZZ\big)\colon\quad \gem(\bh_1,\bh_2) = \bh_1^t \bigl(\rho_{\sm,0}-\rho_{\sm,1}\bigr) \bh_2.
\end{gather}
The form $\gem$ can also be defined with the method of Section~\ref{pph:GK}, i.e.~(cf.~\eqref{eq:GK8.2})
\begin{gather*}
\gem = \sum_{\bw\in\ospw}\delta_\bw + \sum_{\bb\in\ospb}\delta_\bb .
\end{gather*}
As a consequence the right-hand side of~\eqref{eq:form m 6} is independent of the choice of the perfect matching $\sm$.

\medskip

\pph{} The bilinear forms $\gep$ and $\gem$ in \eqref{eq:form m 5}--\eqref{eq:form m 6} define two Poisson structures on $\ZZ\big[\ho_1\big(\gGLD, \ZZ\big)\big]$; see~\eqref{eq:poisson bracket1}. So, they also define Poisson structures on $\ZZ[\ocW^0[-\sm_0]]$ for every $\sm_0\in\ocM$. Altogether this gives:

\begin{Theorem}\label{thm:poisson} The bilinear forms $\gep$, $\gem$ in \eqref{eq:form m 5}--\eqref{eq:form m 6}, i.e., for $\bh_1, \bh_2\in\ho_1\big(\gGLD, \ZZ\big)$
\begin{gather*}
\gep(\bh_1,\bh_2)=\bh_1^t \bigl(\rho_{\sm,0}+\rho_{\sm,1}-\II\bigr) \bh_2 ,\qquad
\gem(\bh_1,\bh_2)=\bh_1^t \bigl(\rho_{\sm,0}-\rho_{\sm,1}\bigr) \bh_2 ,
\end{gather*}
define two Poisson structures on $\Proj(\ZZ[\ocW])$ which extend the Poisson structures on the torus $\ho^1\big(\gGLD,\CC^*\big)$ described in {\rm \cite[Section~1.1]{GK}}.
\end{Theorem}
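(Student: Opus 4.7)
The plan is to obtain the two Poisson structures by the standard procedure of turning an antisymmetric bilinear form on a lattice into a Poisson bracket on its group algebra, and then gluing the resulting structures across the affine charts of $\Proj(\ZZ[\ocW])$. There are essentially three steps.

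\textbf{Step 1 (the forms).} I would first collect the fact that $\gep$ and $\gem$ are antisymmetric $\ZZ$-bilinear forms on $\ho_1(\gGLD,\ZZ)$ that are independent of the auxiliary perfect matching $\sm$ used in \eqref{eq:form m 5}--\eqref{eq:form m 6}. For $\gep$ this is exactly Proposition~\ref{prop:poisson m}. For $\gem$, independence from $\sm$ follows from the alternative description $\gem=\sum_{\bw\in\ospw}\delta_\bw+\sum_{\bb\in\ospb}\delta_\bb$ stated just before Theorem~\ref{thm:poisson}, while antisymmetry follows from the matrix description: the symmetric part of $\rho_{\sm,0}-\rho_{\sm,1}$ is $(\rho_{\sm,0}+\rho_{\sm,0}^t-\II)-(\rho_{\sm,1}+\rho_{\sm,1}^t-\II)$, and by the analysis preceding \eqref{eq:form m 3}--\eqref{eq:form m 4} both matrices in parentheses annihilate $\ho_1(\gGLD,\ZZ)$ individually, thanks to \eqref{eq:homology cycle}.

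\textbf{Step 2 (bracket on the ambient torus).} I would extend each form $\vge\in\{\gep,\gem\}$ by $\ZZ$-linearity to a bracket on the group algebra $\ZZ[\ho_1(\gGLD,\ZZ)]$ via $\{X^{\bh_1},X^{\bh_2}\}=\vge(\bh_1,\bh_2)X^{\bh_1+\bh_2}$ as in \eqref{eq:poisson bracket1}. The Leibniz rule reduces to the bilinearity of $\vge$. The Jacobi identity reduces on monomial generators to the scalar identity $\vge(\bh_1,\bh_2)\vge(\bh_1+\bh_2,\bh_3)+\vge(\bh_2,\bh_3)\vge(\bh_2+\bh_3,\bh_1)+\vge(\bh_3,\bh_1)\vge(\bh_3+\bh_1,\bh_2)=0$, which is a one-line consequence of bilinearity and antisymmetry.

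\textbf{Step 3 (descent to $\Proj(\ZZ[\ocW])$ and comparison with GK).} By Section~\ref{pph:points} the scheme $\Proj(\ZZ[\ocW])$ is covered by affine opens $\Spec\bigl(\ZZ\bigl[\ocW^0[-\sm_0]\bigr]\bigr)$ for $\sm_0\in\ocM$, and each $\ocW^0[-\sm_0]$ is a sub-semi-group of $\ho_1(\gGLD,\ZZ)$. Since $\bh_1+\bh_2\in\ocW^0[-\sm_0]$ whenever $\bh_1,\bh_2\in\ocW^0[-\sm_0]$, the ambient bracket from Step~2 restricts to a Poisson bracket on each $\ZZ\bigl[\ocW^0[-\sm_0]\bigr]$. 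Because all these restrictions arise from the one ambient bracket, they agree on the common localisation $\Spec\bigl(\ZZ\bigl[\ho_1(\gGLD,\ZZ)\bigr]\bigr)$ and therefore glue to a global Poisson structure on $\Proj(\ZZ[\ocW])$. The restriction of $\gep$ to the complex torus $\ho^1(\gGLD,\CC^*)$ recovers the Goncharov--Kenyon bracket of \cite[Section~1.1]{GK}, since Proposition~\ref{prop:poisson m} identifies \eqref{eq:GK8.2} with \eqref{eq:form m 5}.

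\textbf{Main obstacle.} The only genuinely nontrivial input is the combination of antisymmetry and $\sm$-independence of $\gem$ used in Step~1; fortunately both ingredients are already supplied by the text leading up to the theorem, so the remaining work is the formal ``antisymmetric form on lattice $\Rightarrow$ Poisson structure on toric scheme'' machinery of Steps~2 and~3.
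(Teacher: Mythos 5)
Your proposal is correct and follows essentially the same route as the paper: the text preceding Theorem~\ref{thm:poisson} simply observes that $\gep$ and $\gem$ define Poisson brackets on $\ZZ\big[\ho_1\big(\gGLD,\ZZ\big)\big]$ via \eqref{eq:poisson bracket1}, hence on each $\ZZ\big[\ocW^0[-\sm_0]\big]$, and concludes by the affine covering of Section~\ref{pph:points}. Your Steps 1--3 merely make explicit the antisymmetry of $\gem$, the Jacobi identity, and the gluing, all of which the paper leaves implicit.
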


\pph{}\label{pph:symplectic structures} Note the equivalences of data
\begin{itemize}\itemsep=0pt
\item the pair of matrices $\rho_{\sm,0}+\rho_{\sm,1}-\II$ and $\rho_{\sm,0}-\rho_{\sm,1}$,
\item the pair of matrices $-\II+2\rho_{\sm,0}$ and $-\II+2\rho_{\sm,1}$,
\item the pair of permutations $\gs_0$ and $\gs_1$ plus the perfect matching $\sm$.
\end{itemize}
The four matrices define anti-symmetric bilinear forms on $\ho_1\big(\gGLD, \ZZ\big)$:
\begin{gather}\label{eq:poisp} \gep(\bh_1,\bh_2)=\bh_1^t\cdot(\rho_{\sm,0}+\rho_{\sm,1}-\II)\cdot\bh_2 ,\\
\label{eq:poism} \gem(\bh_1,\bh_2)=\bh_1^t\cdot(\rho_{\sm,0}-\rho_{\sm,1})\cdot\bh_2 ,\\
\label{eq:poisb} \geb(\bh_1,\bh_2)=\bh_1^t\cdot(-\II+2\rho_{\sm,1})\cdot\bh_2 ,\\
\label{eq:poisw} \gew(\bh_1,\bh_2)=\bh_1^t\cdot(-\II+2\rho_{\sm,0})\cdot\bh_2 ,
\end{gather}
for $\bh_1, \bh_2\in\ho_1\big(\gGLD, \ZZ\big)$; cf.~\eqref{eq:form m 5}--\eqref{eq:form m 6}. One has the obvious relations
\begin{gather*}
\gep=\breuk{1}{2}(\geb+\gew) ,\qquad \gem=\breuk{1}{2}(\gew-\geb) .
\end{gather*}
Each of the forms $\gep$, $\gem$, $\geb$, $\gew$ defines a Poisson structure on $\Proj(\ZZ[\ocW])$, independent of the choice of the perfect matching $\sm$.

The matrices $-\II+2\rho_{\sm,0}$ and $-\II+2\rho_{\sm,1}$ are of the form $\II+\mathrm{nilpotent}$ and are therefore invertible. Note that
\begin{gather}\label{eq:rho01}
-\II+2\rho_{\sm,0}=(\II+\vgs_{\sm,0})(\II-\vgs_{\sm,0})^{-1} ,\qquad -\II+2\rho_{\sm,1}=(\II+\vgs_{\sm,1})(\II-\vgs_{\sm,1})^{-1} .
\end{gather}
The matrix $\rho_{\sm,0}-\rho_{\sm,1}$ is not invertible, because for every $\bv\in\ospv$:
\begin{gather}\label{eq:form m 7}
\bigl(\rho_{\sm,0}-\rho_{\sm,1}\bigr) \ga_\bv = 0
\end{gather}
with $\ga_\bv$ as in \eqref{eq:vertex cycle}, here viewed as a column vector. The kernel of the homomorphism $\ho_1\big(\gGLD, \ZZ\big)$ $\rightarrow\ho_1(\TT,\ZZ)=\gL$ induced by the embedding $\gGLD\hookrightarrow\TT$, is generated by the elements $\cga_\bv$, $\bv\in\ospv$. So~\eqref{eq:form m 7} means that the bilinear form $\gem$ in~\eqref{eq:form m 6} is the pull-back of the intersection form on $\ho_1(\TT,\ZZ)$; see also~\cite[Section~1.1]{GK}.

\begin{Definition}\label{def:zigzag} \textup{A \emph{zigzag} for the superpotential $\SFL=(\ocE,\gs_0,\gs_1)$ is a cycle of the permuta\-tion~$\gs_1\gs_0$. The set of zigzags is denoted by $\ospz$. Every zigzag $\bz$ defines a map} $\ga_\bz\colon \ocE\longrightarrow\ZZ$,
\begin{gather*}
\ga_\bz(e)=1\quad\text{if}\quad e\in\bz,\qquad \ga_\bz(e)=-1\quad \textrm{if}\quad \gs_0(e)\in\bz,\qquad \ga_\bz(e)=0\quad\textrm{else}.
\end{gather*}
\end{Definition}

The matrix $\rho_{\sm,0}+\rho_{\sm,1}-\II$ is not invertible either, because, as one easily checks,
\begin{gather}\label{eq:form m 8}
\bigl(\rho_{\sm,0}+\rho_{\sm,1}-\II\bigr) \ga_\bz = 0
\end{gather}
for every zigzag $\bz$. Compare formula~\eqref{eq:form m 8} with~\cite[Lemma~1.1]{GK}.

\medskip

\pph{} The matrices $-\II+2\rho_{\sm,0}$ and $-\II+2\rho_{\sm,1}$ have entries in $\ZZ_{\geq0}$ and are of the form $\II+\mathrm{nilpotent}$. So, they give injective (but not surjective) homomorphisms of semi-groups
\begin{gather}\label{eq:Wduality maps}
\sT_{\sm,0}, \sT_{\sm,1}\colon \ \ocW\longrightarrow\WLD ,
\end{gather}
which depend on the choice of the perfect matching $\sm$. Here $\WLD$ is the semi-group dual to $\ocW$.

\subsection[The geometry of $\protect{\Spec(\ZZ[\ocW])}$]{The geometry of $\boldsymbol{\Spec(\ZZ[\ocW])}$}\label{subsec:specW}

\pph{}\label{pph:points 2} By definition complex points of the scheme $\Spec(\ZZ[\ocW])$ are ring homomorphisms $\ZZ[\ocW]\rightarrow\CC$, or equivalently, homomorphism of semi-groups $\xi\colon \ocW\rightarrow\CC^\times$, where $\CC^\times$ denotes the set $\CC$ with multiplication as binary operation. Such a homomorphism $\xi$ is completely determined by the complex numbers $\xi_\sm=\xi(\sm)$, $\sm\in\ocM$, which must satisfy the equations~\eqref{eq:binomial eqs}. So
\begin{gather*}
\CC\Spec(\ZZ[\ocW])=\bigg\{(\xi_\sm)\in\CC^{\ocM}\,\big|\,\forall\,\mu\in\MM\colon
\prod_{\mu(\sm)>0} \xi_{\sm}^{\mu(\sm)} = \prod_{\mu(\sm)<0} \xi_{\sm}^{-\mu(\sm)}\bigg\},
\end{gather*}
where $\MM$ denotes the lattice of $\ZZ$-linear relations between the perfect matchings:
\begin{gather*}
\MM=\bigg\{\mu\in\ZZ^{\ocM}\,\big|\, \sum_{\sm\in\ocM}\mu(\sm)\sm=0\bigg\}.
\end{gather*}
A map $\psi\colon \ocE\rightarrow\CC$ gives a point in $\CC\Spec(\ZZ[\ocW])$ through the homomorphism
\begin{gather}\label{eq:psi point}
E_\psi\colon \ \ocW\longrightarrow\CC^*\subset\CC^\times ,\qquad E_\psi(\nu) = \exp\bigg(\sum_{e\in\ocE}\nu(e)\psi(e)\bigg) .
\end{gather}
The following commutative diagram helps to locate these points
\begin{gather}\label{eq:specproj}
\begin{array}{@{}ccccccc}
\Spec(\ZZ[\oocW])&\rightarrow&\Spec(\ZZ[\ocW])&\rightarrow&\aA^{|\ocM|}&\supset&\CC^{|\ocM|}
\\[2ex]
\uparrow&&\uparrow&&\uparrow&&\uparrow
\\[2ex]
\Spec(\ZZ[\oocW])\setminus\nul&\rightarrow&
\Spec(\ZZ[\ocW])\setminus\nul&\rightarrow&\aA^{|\ocM|}\setminus\nul&\supset&(\CC^*)^{|\ocM|}
\\[2ex]
\downarrow&&\downarrow&&\downarrow&&\downarrow
\\[2ex]
\Proj(\ZZ[\oocW])&\rightarrow&\Proj(\ZZ[\ocW])&\rightarrow&\PP^{|\ocM|-1}&\supset&(\CC^*)^{|\ocM|}/\CC^*
\\[2ex]
\uparrow&&\uparrow&&\uparrow&\nearrow_\simeq&
\\[2ex]
\ho^1(\TT, \CC^*)&\rightarrow&\ho^1(\gGLD, \CC^*)&\rightarrow&(\CC^*)^{|\ocM|-1}&&
\end{array}
\end{gather}
where on the second line $\setminus\nul$ is shorthand for $\cap\big(\aA^{|\ocM|}\setminus\nul\big)$.

The point given by \eqref{eq:psi point} appears, for instance, in $\CC^{|\ocM|}$ as $\bigl(\exp(E_\psi(\sm))\bigr)_{\sm\in\ocM}$. Since its coordinates are $\neq0$ it projects down into $\ho^1\big(\gGLD, \CC^*\big) \subset (\CC^*)^{|\ocM|-1}$.

Replacing $\psi$ by $\psi+\sum\limits_{\bb\in\ospb} c_\bb\gb_\bb +\sum\limits_{\bw\in\ospw} c_\bw\gb_\bw$ with $\gb_\bb$, $\gb_\bw$ as in \eqref{eq:black cycle}, \eqref{eq:white cycle} and $c_\bb,c_\bw\in\CC$ multiplies all coordinates of the point~\eqref{eq:psi point} with $\exp\Bigl(\sum\limits_{\bb\in\ospb} c_\bb+\sum_{\bw\in\ospw} c_\bw\Bigr)$. Such a replacement therefore shows no effect when we arrive downstairs in projective space. Note that this observation agrees with~\eqref{eq:homology cycle} and $\ho^1\big(\gGLD,\CC^*\big) = \ho^1\big(\gGLD,\ZZ\big)\otimes_\ZZ\CC^*$.

\medskip

\pph{} $\!\!\!$The description \eqref{eq:binomials} of the kernel of the homomorphism \eqref{eq:weight ring} implies that $\Spec(\ZZ[\ocW])$ can be identified with the closed subscheme of the affine space $\aA^{|\ocM|}=\Spec(\ZZ[X_\sm\,|\,\sm\in\ocM])$ given by the equations~\eqref{eq:binomial eqs}. The function $\sum\limits_{\sm\in\ocM} X_\sm$ on $\aA^{|\ocM|}$ restricts to a function on $\Spec(\ZZ[\ocW])$. At the point \eqref{eq:psi point} this function has the value
\begin{gather*}
\sum_{\sm\in\ocM}\exp\bigg(\sum_{e\in\ocE}\sm(e)\psi(e)\bigg) .
\end{gather*}

\subsection{The geometry of a weight realization}\label{sec:geo weight real}

\pph{} Let $(\nu_1,\nu_2,\nu_3)$ be a weight realization. Set $\V=(\nu_1-\nu_3, \nu_2-\nu_3)$ and $\theta=\frac{1}{\deg\nu_3}\nu_3$. For $e\in\ocE$ consider the associated quadrangle. Proposition~\ref{prop:quadrangle} gives values in $\RR^2$ for the vectors from $s(\widetilde{e})$ to $b(\widetilde{e})$, to $w(\widetilde{e})$ and to $t(\widetilde{e})$. We identify $\RR^2$ with $\CC$ and obtain three complex numbers $q_{sb}(e)$, $q_{sw}(e)$, $\V(e)$. We set
\begin{gather}\label{eq:sfq}
\sfq_{\bb}(e)=q_{sb}(e)-\breuk{1}{2}\V(e) ,\qquad \sfq_{\bw}(e)=q_{sw}(e)-\breuk{1}{2}\V(e) .
\end{gather}
The complex numbers $\sfq_{\bb}(e)$ and $\sfq_{\bw}(e)$ are the vectors from the midpoint of the $s(\widetilde{e})t(\widetilde{e})$-diagonal to $b(\widetilde{e})$ and $w(\widetilde{e})$, respectively; see Fig.~\ref{fig:quadrangle2}.

\begin{figure}[t]\centering
\includegraphics{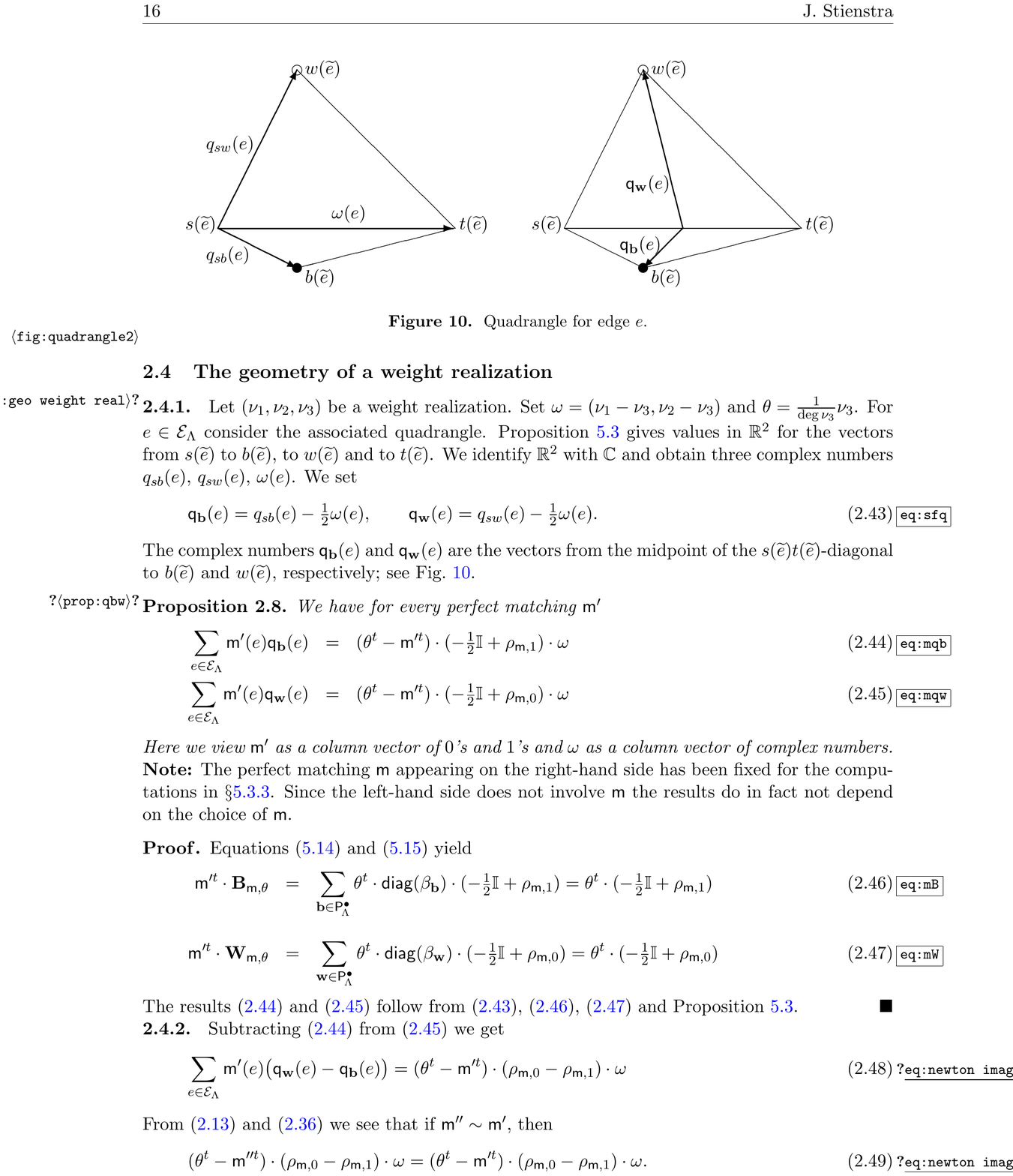}
\caption{Quadrangle for edge $e$.}\label{fig:quadrangle2}
\end{figure}

\begin{Proposition}\label{prop:qbw}
We have for every perfect matching $\sm'$
\begin{gather}\label{eq:mqb}
\sum_{e\in\ocE}\sm'(e)\sfq_{\bb}(e)=(\theta^t-\sm'^t)\cdot\big({-}\breuk{1}{2}\II+\rho_{\sm,1}\big)\cdot\V,\\
\label{eq:mqw}
\sum_{e\in\ocE}\sm'(e)\sfq_{\bw}(e)= (\theta^t-\sm'^t)\cdot\big({-}\breuk{1}{2}\II+\rho_{\sm,0}\big)\cdot\V.
\end{gather}
Here we view $\sm'$ as a column vector of $0$'s and $1$'s and $\V$ as a column vector of complex numbers.\footnote{The perfect matching $\sm$ appearing on the right-hand side has been fixed for the computations in Section~\ref{sec:draw GD}. Since the left-hand side does not involve $\sm$ the results do in fact not depend on the choice of $\sm$.}
\end{Proposition}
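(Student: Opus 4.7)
The plan is to decompose both sides of \eqref{eq:mqb} by the cycles of $\gs_1$ (which correspond to the black polygons) and verify the equality cycle by cycle. Fix such a cycle $C$ and, following Section~\ref{pph:Poisson3}, write it with its $\sm$-edge first as $C = (e_1^C,\dots,e_{q_C}^C)$. Set $p_i^C = s(\widetilde{e_i^C})$ and $m_i^C = p_i^C + \tfrac{1}{2}\V(e_i^C)$, so that closure of the black polygon boundary gives the telescoping identity $\sum_{i<j}\V(e_i^C) = p_j^C - p_1^C$. The marked interior point of $\bb_C$ is $b(\bb_C) = \sum_i \theta(e_i^C)\, m_i^C$ by the convex-combination rule for $\theta$. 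Since $\sm'$ is a perfect matching it picks out exactly one edge $e_{k(C)}^C$ per cycle, so the left-hand side rearranges as
\begin{gather*}
\sum_{e\in\ocE}\sm'(e)\sfq_\bb(e) \;=\; \sum_C \bigl(b(\bb_C) - m_{k(C)}^C\bigr).
\end{gather*}

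Next I would compute the column vector $\bigl(-\tfrac{1}{2}\II + \rho_{\sm,1}\bigr)\V$ entry by entry, using the explicit description of $\rho_{\sm,1}$ recalled in Section~\ref{pph:Poisson3}: for $e'\neq e$, the $(e',e)$-entry is $1$ iff $e$ and $e'$ lie in the same cycle of $\gs_1$ with $e$ to the left of $e'$, and the diagonal entries are $1$. Summing column contributions at row $e' = e_j^C$ and applying the telescoping identity yields
\begin{gather*}
\bigl[\bigl(-\tfrac{1}{2}\II + \rho_{\sm,1}\bigr)\V\bigr]_{e_j^C} \;=\; \tfrac{1}{2}\V(e_j^C) + \sum_{i<j}\V(e_i^C) \;=\; m_j^C - p_1^C.
\end{gather*}

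Multiplying on the left by $(\theta^t - \sm'^t)$ and splitting the sum over cycles then gives
\begin{gather*}
(\theta^t - \sm'^t)\bigl(-\tfrac{1}{2}\II + \rho_{\sm,1}\bigr)\V \;=\; \sum_C\sum_j\bigl(\theta(e_j^C)-\sm'(e_j^C)\bigr)\bigl(m_j^C - p_1^C\bigr).
\end{gather*}
Inside each cycle, the coefficient of $p_1^C$ is $\sum_j\theta(e_j^C) - \sum_j\sm'(e_j^C) = 1 - 1 = 0$, because both $\theta$ and $\sm'$ satisfy the weight condition \eqref{eq:weight condition} with degree $1$. Hence the $\sm$-dependent vertex $p_1^C$ drops out and the remaining sum collapses to $\sum_C\bigl(b(\bb_C) - m_{k(C)}^C\bigr)$, matching the left-hand side. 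This cancellation incidentally makes the $\sm$-independence noted in the footnote transparent.

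The proof of \eqref{eq:mqw} is entirely parallel, using cycles of $\gs_0$, white polygons, the midpoints of their boundary edges, and $\rho_{\sm,0}$ in place of their $\gs_1$-counterparts. I expect no real obstacle beyond careful bookkeeping; the only subtle point is noticing that the starting-vertex term $p_1^C$ in the expression for $\bigl[\bigl(-\tfrac{1}{2}\II + \rho_{\sm,1}\bigr)\V\bigr]_{e_j^C}$ is killed by the joint degree-$1$ normalisation of $\theta$ and $\sm'$, which is precisely what forces the $\sm$-dependence to cancel.
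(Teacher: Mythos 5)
Your argument is correct, and every step checks out against the paper's conventions: the cycles of $\gs_1$ are the black polygons, your telescoping identity is exactly the paper's formula for the vertex locations, and your entrywise description of $\bigl(-\breuk{1}{2}\II+\rho_{\sm,1}\bigr)\V$ agrees with the explicit combinatorial description of $\rho_{\sm,1}$ given in Section~\ref{pph:Poisson3}. The difference from the paper is one of packaging rather than substance. The paper's proof is a two-line deduction: by Proposition~\ref{prop:quadrangle} and \eqref{eq:sfq}, $\sfq_\bb$ is the row vector read off from $\bigl(\mathbf{B}_{\sm,\theta}+\breuk{1}{2}\II-\rho_{\sm,1}\bigr)\cdot\V$, and since a perfect matching selects exactly one edge in each black polygon one has $\sum_e\sm'(e)\diag(\gb_{b(e)})=\II$, whence $\sm'^t\cdot\mathbf{B}_{\sm,\theta}=\theta^t\cdot\bigl(-\breuk{1}{2}\II+\rho_{\sm,1}\bigr)$ and the claim follows by matrix multiplication. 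You instead re-derive the geometric content of Proposition~\ref{prop:quadrangle} from scratch (marked points as $\theta$-convex combinations of edge midpoints, base vertex $p_1^C$ at the $\sm$-edge) and perform the cancellation of the $\sm$-dependent base point at the level of the cycle sum weighted by $\theta-\sm'$, where the paper cancels the analogous term $z_\bv$ already in forming the individual vectors $q_{sb}(e)$. What your route buys is self-containedness and a transparent explanation of the footnote's claim that the right-hand side does not depend on $\sm$; what the paper's route buys is brevity, since the bookkeeping is done once in Section~\ref{pph:quad} and reused here.
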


\begin{proof} Equations \eqref{eq:bigB} and \eqref{eq:bigW} yield
\begin{gather}\label{eq:mB}
\sm'^t\cdot \mathbf{B}_{\sm,\theta}=\sum_{\bb\in\ospb}\theta^t\cdot\diag(\gb_\bb)\cdot\big({-}\breuk{1}{2}\II+\rho_{\sm,1}\big) = \theta^t\cdot\big({-}\breuk{1}{2}\II+\rho_{\sm,1}\big), \\
\label{eq:mW}
\sm'^t\cdot \mathbf{W}_{\sm,\theta}=\sum_{\bw\in\ospb}\theta^t\cdot\diag(\gb_\bw)\cdot\big({-}\breuk{1}{2}\II+\rho_{\sm,0}\big) = \theta^t\cdot\big({-}\breuk{1}{2}\II+\rho_{\sm,0}\big).
\end{gather}
The results \eqref{eq:mqb} and \eqref{eq:mqw} follow from \eqref{eq:sfq}, \eqref{eq:mB}, \eqref{eq:mW} and Proposition~\ref{prop:quadrangle}.
\end{proof}

\pph{} Subtracting \eqref{eq:mqb} from \eqref{eq:mqw} we get
\begin{gather*}
\sum_{e\in\ocE}\sm'(e)\bigl(\sfq_{\bw}(e)-\sfq_{\bb}(e)\bigr) = \big(\theta^t-\sm'^t\big)\cdot(\rho_{\sm,0}-\rho_{\sm,1})\cdot\V.
\end{gather*}
From \eqref{eq:equivalence matchings} and \eqref{eq:form m 7} we see that if $\sm''\sim\sm'$, then
\begin{gather*}
(\theta^t-\sm''^t)\cdot(\rho_{\sm,0}-\rho_{\sm,1})\cdot\V = \big(\theta^t-\sm'^t\big)\cdot(\rho_{\sm,0}-\rho_{\sm,1})\cdot\V .
\end{gather*}
This means that the map
\begin{gather*}
\ocM\longrightarrow\RR^2 ,\qquad \sm'\mapsto\sum_{e\in\ocE}\sm'(e)\bigl(\sfq_{\bw}(e)-\sfq_{\bb}(e)\bigr)
=\big(\theta^t-\sm'^t\big)\cdot(\rho_{\sm,0}-\rho_{\sm,1})\cdot\V
\end{gather*}
induces an embedding of $\ocA = \ocM/\sim$ and the Newton polygon $\conv(\ocA)$ into $\RR^2$.

\begin{Example}\label{exa:newton polygons}The above method yields for the superpotentials on the Zhegalkin zebra functions $\cF_2$, $\cF_3$, $\cF_4$, $\cF_6$ and $\cF$ in Figs.~\ref{fig:classic1}, \ref{fig:classic2}, \ref{fig:model12b1} and Example~\ref{exa:model12b2} the Newton polygons in Fig.~\ref{fig:Newton polygons}, where we have also indicated the sizes of the fibres of $\ocM\rightarrow\ocA$ if $>1$.
\end{Example}

\begin{figure}[t]\centering
\includegraphics{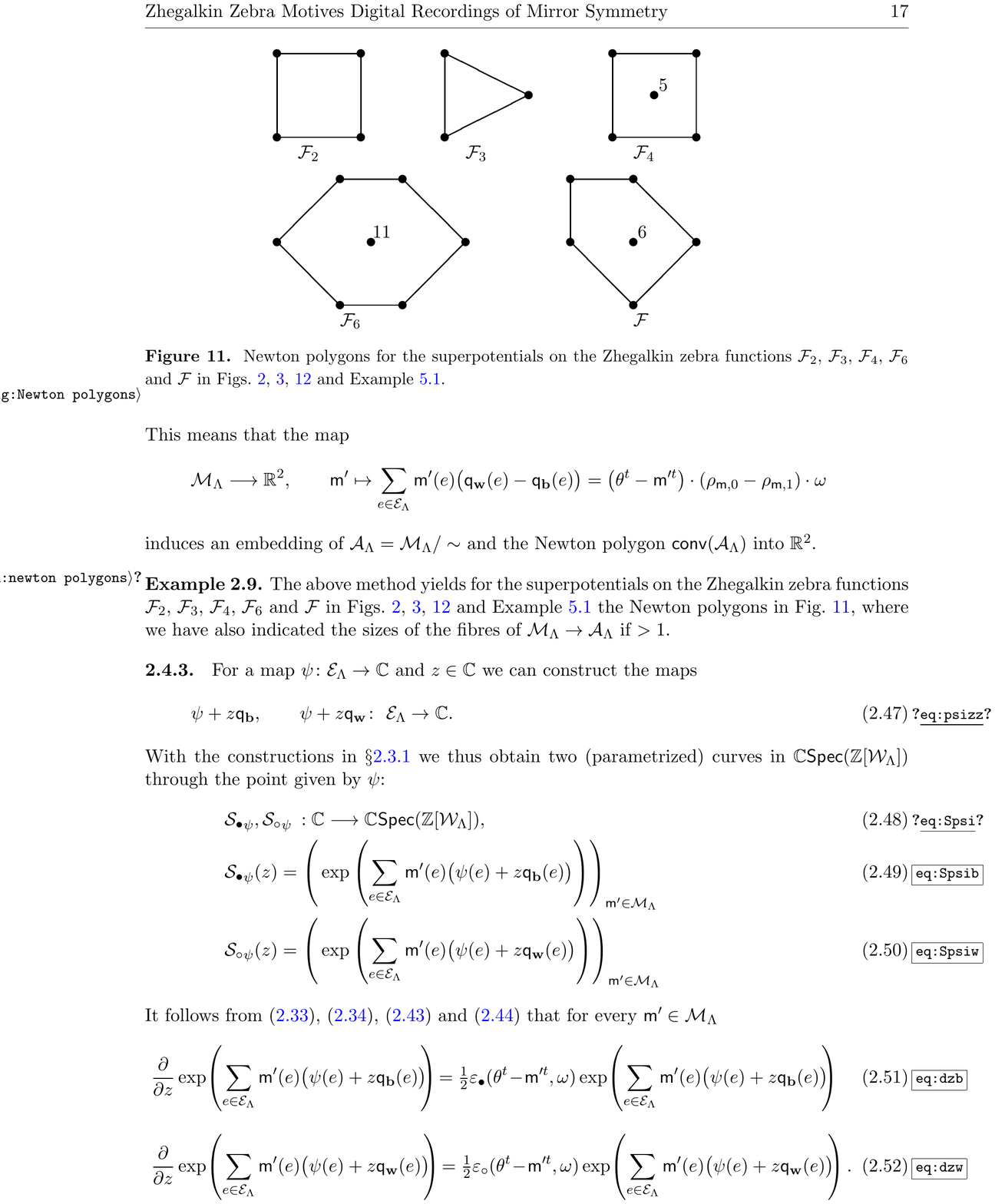}
\caption{Newton polygons for the superpotentials on the Zhegalkin zebra functions $\cF_2$, $\cF_3$, $\cF_4$, $\cF_6$ and $\cF$ in Figs.~\ref{fig:classic1}, \ref{fig:classic2}, \ref{fig:model12b1} and Example \ref{exa:model12b2}.}\label{fig:Newton polygons}
\end{figure}

\pph{} For a map $\psi\colon \ocE\rightarrow\CC$ and $z\in\CC$ we can construct the maps
\begin{gather*}
\psi+z \sfq_\bb ,\qquad \psi+z \sfq_\bw\colon \ \ocE\rightarrow\CC.
\end{gather*}
With the constructions in Section~\ref{pph:points 2} we thus obtain two (parametrized) curves in \linebreak $\CC\Spec(\ZZ[\ocW])$ through the point given by $\psi$:
\begin{gather}\label{eq:Spsi}
 {\cSb}_\psi ,{\cSw}_\psi\colon \ \CC\longrightarrow \CC\Spec(\ZZ[\ocW]) ,\\
\label{eq:Spsib}
 {\cSb}_\psi(z)=
\bigg(\exp\bigg(\sum_{e\in\ocE}\sm'(e)\bigl(\psi(e)+z \sfq_\bb(e)\bigr)\bigg)\bigg)_{\sm'\in\ocM},\\
\label{eq:Spsiw}
{\cSw}_\psi(z)=\bigg(\exp\bigg(\sum_{e\in\ocE}\sm'(e)\bigl(\psi(e)+z \sfq_\bw(e)\bigr)\bigg)\bigg)_{\sm'\in\ocM}.
\end{gather}
It follows from \eqref{eq:poisb}, \eqref{eq:poisw}, \eqref{eq:mqb} and \eqref{eq:mqw} that for every $\sm'\in\ocM$
\begin{gather}
\frac{\partial}{\partial z} \exp \bigg(\sum_{e\in\ocE}\sm'(e)\bigl(\psi(e)+z \sfq_\bb(e)\bigr)\bigg)\nonumber\\
\qquad{} =\breuk{1}{2} \geb\big(\theta^t - \sm'^t,\V\big) \exp\bigg(\sum_{e\in\ocE}\sm'(e)\bigl(\psi(e)+z \sfq_\bb(e)\bigr)\bigg), \label{eq:dzb}\\
\frac{\partial}{\partial z} \exp\bigg(\sum_{e\in\ocE}\sm'(e)\bigl(\psi(e)+z \sfq_\bw(e)\bigr)\bigg) \nonumber\\
\qquad{} = \breuk{1}{2} \gew\big(\theta^t - \sm'^t,\V\big) \exp\bigg(\sum_{e\in\ocE}\sm'(e)\bigl(\psi(e)+z \sfq_\bw(e)\bigr)\bigg) .\label{eq:dzw}
\end{gather}

\pph{}\label{pph:vector fields} Since the coordinates in \eqref{eq:Spsib} and \eqref{eq:Spsiw} are non-zero, these formulas also define two curves
\begin{gather*}
\ol{\cSb}_\psi ,\ol{\cSw}_\psi\colon \ \CC\longrightarrow \ho^1\big(\gGLD,\CC^*\big) \subset \CC\Proj(\ZZ[\ocW]).
\end{gather*}
The symplectic forms $\geb$ and $\gew$ on $\ho^1\big(\gGLD,\CC^*\big)$ and the realization $\V$ yield two vector fields $\geb(\textrm{-},\V)$ and $\gew(\textrm{-},\V)$. The formulas \eqref{eq:dzb}, resp.~\eqref{eq:dzw}, show that the curves $\ol{\cSb}_\psi$, resp.~$\ol{\cSw}_\psi$, are integral curves for these vector fields.

\section{The Jacobi algebra}\label{sec:Jacobi}

Let $\cF$ be a convex Zhegalkin zebra function and $\gL$ a sublattice of $\auto(\cF)$, such that the superpotential $\SFL$ is dimer complete and such that a weight realization of the superpotential exists; see Definitions~\ref{def:zzf}, \ref{def:superpotential}, \ref{def:complete} and~\ref{def:weight realization}. So there is a tiling of the torus $\RR^2/\gL$ by convex black and white polygons. The vertices and edges of these polygons constitute a quiver (=~directed graph)~$\gGL$. In this section we study an algebra which is naturally associated with the embedding of the quiver~$\gGL$ into the torus~$\RR^2/\gL$.

\subsection{General theory of the Jacobi algebra and master space}\label{sec:jacmaster general}

\pph{}\label{sec:jacmaster1}
\begin{Definition}[cf.~\cite{AFM,Bo,Br,FHHZ}]\label{def:Jacobi}\quad
\begin{enumerate}\itemsep=0pt
\item[(i)] The path algebra $\ZZ[\pad(\gGL)]$ of the quiver $\gGL$ is the free abelian group on the set of all paths on~$\gGL$. The product $\bp \bp'$ is the concatenation of $\bp$ and $\bp'$ if $t(\bp)=s(\bp')$ and is~$0$ otherwise. The constant paths supported on the vertices of the quiver are idempotent elements in $\ZZ[\pad(\gGL)]$.

\item[(ii)] The \emph{Jacobi algebra} of the superpotential $\SF_{\gL}$ is the algebra
\begin{gather}\label{eq:Jacobi}
\Jac(\SFL) = \raisebox{.5ex}{$\ZZ[\pad(\gGL)]$}/\raisebox{-.5ex}{$\langle \sD^\circ(e) \,|\, e\in\ocE\rangle$} ,
\end{gather}
where $\langle \sD^\circ(e) \,|\, e\in\ocE\rangle$ is the two sided ideal generated by the elements
\begin{gather}\label{eq:jerel}
\sD^\circ(e) = \prod_{e'\neq e\colon w(e')=w(e)}^{\circlearrowleft}e' - \prod_{e'\neq e\colon b(e')=b(e)}^{\circlearrowright}e' .
\end{gather}

\item[(iii)] The \emph{master space}\footnote{In \cite{AFM,FHHZ} the master space is denoted as $\cF^\flat$.} of $\SFL$ is the scheme $\Spec(\sRf(\SFL))$
with
\begin{gather}\label{eq:master}
\sRf(\SFL) = \raisebox{.5ex}{$\ZZ[X_e\,|\,e\in\ocE]$}/\raisebox{-.5ex}{$\langle \sDf(e) | e\in\ocE\rangle$} ;
\end{gather}
here $\langle \sDf(e) \,| \, e\in\ocE\rangle$ is the ideal generated by the elements
\begin{gather}\label{eq:merel}
\sDf(e) = \prod_{e'\neq e\colon w(e')=w(e)}X_{e'} - \prod_{e'\neq e\colon b(e')=b(e)}X_{e'} .
\end{gather}
\end{enumerate}
\end{Definition}

\begin{Remark}\label{rem:master} Note that $\sDf(e) = \frac{d \mathsf{F}}{dX_e}$ with{\samepage
\begin{gather*}
\mathsf{F}=\sum_{\bw\in\ospw}\prod_{e\in\ocE\colon w(e)=\bw}X_e - \sum_{\bb\in\ospb}\prod_{e\in\ocE\colon b(e)=\bb}X_e .
\end{gather*}
So, $\langle \sDf(e) \,| \, e\in\ocE\rangle$ is the \textit{Jacobi ideal} of the polynomial $\mathsf{F}$.}

The monomials in $\mathsf{F}$ correspond 1-1 with the cycles of the permutations $\gs_0$ and $\gs_1$ with neglect of the cyclic structure. Compare this with Remark \ref{rem:physics potential} and the analogies \eqref{eq:Jacobi}/\eqref{eq:master} and \eqref{eq:jerel}/\eqref{eq:merel}.
\end{Remark}

\pph{} The semi-group dual to $\ocW$ is $\WLD = \Hom(\ocW, \ZZ_{\geq0})$. Evaluation of maps $\ocE\rightarrow\ZZ_{\geq0}$ induces a map
\begin{gather}\label{eq:weight path dual}
\ocE\longrightarrow \WLD , \qquad e\mapsto \bigl(\nu\mapsto\nu(e)\bigr).
\end{gather}
Let $\matel$ denote the ring of matrices with rows and columns indexed by the elements of $\ospv$ and let $\ZZ\big[\WLD\big]$ denote the semi-group ring of $\WLD$. The map \eqref{eq:weight path dual} can then be upgraded to an algebra homomorphism
\begin{gather*}
\widetilde{\Phi}\colon \ \ZZ[\pad(\gGL)] \longrightarrow \matel\big(\ZZ\big[\WLD\big]\big),
\end{gather*}
such that $\widetilde{\Phi}(\bp)$ is the matrix with all entries $0$ except for the $(s(\bp),t(\bp))$-entry, which is $\bp$ viewed as an element of $\WLD$ through \eqref{eq:weight path dual}; i.e.,
\begin{gather*}
\bp=(e_1,\ldots,e_k) \mapsto \bigg(\nu\mapsto\nu(\bp)=\sum_{j=1}^k \nu(e_j)\bigg).
\end{gather*}
It is clear from \eqref{eq:weight condition} and \eqref{eq:jerel} that $\widetilde{\Phi}$ induces an algebra homomorphism
\begin{gather}\label{eq:jacobi weight duality map}
\Phi\colon \ \Jac(\SFL) \longrightarrow \matel\big(\ZZ\big[\WLD\big]\big).
\end{gather}

\begin{Definition}\label{def:tautological representation} We call the above homomorphism $\Phi$ the \textit{tautological representation of the Jacobi algebra}.
\end{Definition}

\pph{}\label{sec:jacmaster2} It follows from \eqref{eq:weight condition} and \eqref{eq:merel} that the ring homomorphism
\begin{gather*}
\ZZ[X_e\,|\,e\in\ocE]\longrightarrow\ZZ\big[\WLD\big] ,\qquad X_e\mapsto (\nu\mapsto\nu(e))
\end{gather*}
induces a ring homomorphism
\begin{gather}\label{eq:irrmaster1}
\sRf(\SFL)\longrightarrow\ZZ\big[\WLD\big]
\end{gather}
and, hence, a morphism of schemes
\begin{gather}\label{eq:irrmaster2}
\Spec\big(\ZZ\big[\WLD\big]\big)\longrightarrow\Spec\big(\sRf(\SFL)\big) .
\end{gather}

\begin{Proposition}\label{prop:irrmaster} The image of the morphism \eqref{eq:irrmaster2} is an irreducible closed subscheme
of the Master Space $\Spec\big(\sRf(\SFL)\big)$.\footnote{In \cite{AFM,FHHZ} the master space is denoted by $\cF^\flat$ and the irreducible component by
${}^{\textrm{Irr}}\!\cF^\flat$.}
\end{Proposition}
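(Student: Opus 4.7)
The plan is to identify the scheme-theoretic image of the morphism \eqref{eq:irrmaster2} with $\Spec\big(\sRf(\SFL)/\mathfrak{p}\big)$, where $\mathfrak{p} := \ker\big(\sRf(\SFL)\to\ZZ\big[\WLD\big]\big)$ is the kernel of the ring homomorphism \eqref{eq:irrmaster1}. Because everything in sight is affine, $V(\mathfrak{p})$ is automatically a closed subscheme of the master space $\Spec\big(\sRf(\SFL)\big)$, so the irreducibility assertion reduces to showing that $\mathfrak{p}$ is a prime ideal. Since $\sRf(\SFL)/\mathfrak{p}$ injects by construction into $\ZZ\big[\WLD\big]$, it suffices to prove that the semigroup ring $\ZZ\big[\WLD\big]$ itself is an integral domain.

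To that end I would first check that $\WLD = \Hom(\ocW, \ZZ_{\geq 0})$ is a cancellative torsion-free commutative monoid. By Definition \ref{def:weight} the semigroup $\ocW$ of integer weight functions sits inside $\ZZ^{\ocE}$, and by \eqref{eq:rank W} it has rank $|\ospv|+2$; in particular its groupification $\ocW^{\mathrm{gp}}$ is a free abelian group of this rank. Every semigroup homomorphism $\ocW\to\ZZ_{\geq0}$ extends uniquely to a group homomorphism $\ocW^{\mathrm{gp}}\to\ZZ$, so $\WLD$ embeds as a submonoid of the free abelian group $\Hom\big(\ocW^{\mathrm{gp}},\ZZ\big)\cong\ZZ^{|\ospv|+2}$, and any such submonoid is automatically cancellative and torsion-free.

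The standard fact that the semigroup ring of a cancellative torsion-free commutative monoid is an integral domain then applies: $\ZZ\big[\WLD\big]$ is a subring of the Laurent polynomial ring $\ZZ\big[\Hom(\ocW^{\mathrm{gp}},\ZZ)\big]\cong\ZZ\big[t_1^{\pm1},\ldots,t_{|\ospv|+2}^{\pm1}\big]$, which is manifestly a domain. Hence $\sRf(\SFL)/\mathfrak{p}$ is a domain, $\mathfrak{p}$ is prime, and $V(\mathfrak{p})$ is an irreducible closed subscheme of the master space. The only non-formal ingredient is the identification of $\WLD$ with a submonoid of a free abelian group; once that is in place the conclusion follows from pure commutative algebra, and I do not anticipate any serious obstacle.
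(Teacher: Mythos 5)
Your proof is correct and follows the same overall reduction as the paper's: the image is the closed subscheme $V(\fp)$ for $\fp=\ker\big(\sRf(\SFL)\to\ZZ\big[\WLD\big]\big)$, and irreducibility comes down to showing that $\ZZ\big[\WLD\big]$ is an integral domain. Where you diverge is in how you certify that last fact. The paper uses $\ocW=\ZZ_{\geq0}\ocM$ from Proposition~\ref{prop:matching polytope}$(ii)$ (a consequence of dimer completeness) to see that an element of $\WLD$ is determined by its values on the finitely many perfect matchings, which yields the injective ring homomorphism $\ZZ\big[\WLD\big]\to\ZZ[U_\sm\,|\,\sm\in\ocM]$ into an ordinary polynomial ring; that particular embedding is then reused right after the proposition to obtain the invariant-ring description \eqref{eq:WDinv2}. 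You instead invoke the general monoid-theoretic fact: since $\ocW\subset\ZZ^{\ocE}$ its groupification is free abelian of finite rank, so $\WLD$ embeds in $\Hom\big(\ocW^{\mathrm{gp}},\ZZ\big)\cong\ZZ^{|\ospv|+2}$ and is therefore cancellative and torsion-free, whence $\ZZ\big[\WLD\big]$ is a subring of a Laurent polynomial ring. Your route is marginally more general --- it does not use $\ocW=\ZZ_{\geq0}\ocM$, hence not dimer completeness, for this step --- while the paper's choice of embedding does double duty by feeding directly into the subsequent toric description. Both arguments are complete.
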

\begin{proof} Since $\ocW = \ZZ_{\geq0}\ocM$ by \eqref{eq:weight span} an element $\ga$ of $\WLD$ is completely determined by its values $\ga(\sm)$ for $\sm\in\ocM$. So there is an injective ring homomorphism
\begin{gather}\label{eq:WDinv1}
\ZZ\big[\WLD\big]\longrightarrow\ZZ[U_\sm\,|\,\sm\in\ocM] ,\qquad \ga\mapsto\prod_{\sm\in\ocM}U_{\sm}^{\ga(\sm)} .
\end{gather}
It follows that the ring $\ZZ\big[\WLD\big]$ has no zero-divisors and that the kernel of the ring homomorphism~\eqref{eq:irrmaster1} is a prime ideal.
\end{proof}

\pph{} With $\MM$ as in equation~\eqref{eq:matchrel} we have
\begin{gather*}
\WLD=\bigg\{\ga\in\ZZ^{\ocM}\,|\,\forall\, \mu\in\MM \colon \sum_{\sm\in\ocM}\ga(\sm)\mu(\sm)=0\bigg\}.
\end{gather*}
From this we see that \eqref{eq:WDinv1} identifies $\CC\big[\WLD\big]$ with the sub-ring of $\CC[U_\sm\,|\,\sm\in\ocM]$ consisting of those polynomials which are invariant under the (obvious) action of the torus $\MM\otimes\CC^*$:
\begin{gather}\label{eq:WDinv2}
\CC\big[\WLD\big]=\CC[U_\sm\,|\,\sm\in\ocM]^{\MM\otimes\CC^*}.
\end{gather}
The situation described by \eqref{eq:WDinv2} is in an obvious sense dual to the situation described in \eqref{eq:weight ring}--\eqref{eq:binomial eqs}.

\begin{Remark}\label{rem:master 2}The story in \eqref{eq:master}, \eqref{eq:merel}, \eqref{eq:matchrel}, \eqref{eq:WDinv2} is well-known. It differs from the discussion of the master space and its irreducible component in \cite{AFM, FHHZ} only in terminology and style and in that we have highlighted the role of the weight functions.
\end{Remark}

\pph{} By definition the \textit{center of the Jacobi algebra} is
\begin{gather*}
\sZ(\Jac(\SFL)) = \{\pi\in\Jac(\SFL)\,|\,\forall\, e\in\ocE\colon \pi e=e\pi \}.
\end{gather*}
Applying $\Phi$ \eqref{eq:jacobi weight duality map} to an element $\pi$ in $\sZ(\Jac(\SFL))$ yields the matrix equations
\begin{gather*}
\forall\, e\in\ocE\colon \ \Phi(\pi)\Phi(e)=\Phi(e)\Phi(\pi).
\end{gather*}
For $\nu\in\ocW$ ``evaluation at $\nu $'' defines a homomorphism of semi-groups $\WLD\rightarrow\ZZ_{\geq0}$ and, hence, a homomorphism of rings $\ZZ\big[\WLD\big]\rightarrow\ZZ[u]$. By combining this homomorphism with $\Phi$ we obtain an algebra homomorphism
\begin{gather*}
\Phi_\nu\colon \ \Jac(\SFL) \longrightarrow \matel(\ZZ[u]),
\end{gather*}
such that for every $e\in\ocE$ the only non-zero entry of the matrix $\Phi_\nu(e)$ is $u^{\nu(e)}$ in position $(s(e),t(e))$. The matrix equations
\begin{gather*}
\forall\, e\in\ocE\colon \ \Phi_\nu(\pi)\Phi_\nu(e)=\Phi_\nu(e)\Phi_\nu(\pi)
\end{gather*}
then imply that there is an element $c_\pi\in\ZZ\big[\WLD\big]$ such that
\begin{gather*}
\Phi(\pi) = c_\pi \II ,\qquad\text{i.e.},\qquad \forall\,\nu\in\ocW\colon \ \Phi_\nu(\pi) = c_\pi(\nu)\II .
\end{gather*}
Now let $\nu, \nu'\in\ocW$ be such that $\nu\sim\nu'$ (see \eqref{eq:equivalence matchings}), say
\begin{gather}\label{eq:nunu1}
\nu-\nu'=\sum_{\bv\in\ospv} r_\bv\ga_\bv\qquad\textrm{with}\quad r_\bv\in\ZZ ,\quad\forall\, \bv\in\ospv.
\end{gather}
Then we have for all $e\in\ocE$:
\begin{gather}\label{eq:nunu2}
\Phi_{\nu'}(e) = \diag\bigl(\big(u^{r_\bv}\big)_{\bv\in\ospv}\bigr)\cdot\Phi_{\nu}(e)\cdot \diag\bigl(\big(u^{-r_\bv}\big)_{\bv\in\ospv}\bigr).
\end{gather}
Consequently we have for $\pi$ in $\sZ(\Jac(\SFL))$:
\begin{gather}\label{eq:nunu3}
\Phi_{\nu'}(\pi) = \diag\bigl(\big(u^{r_\bv}\big)_{\bv\in\ospv}\bigr)\cdot\Phi_{\nu}(\pi)\cdot \diag\bigl(\big(u^{-r_\bv}\big)_{\bv\in\ospv}\bigr)
\end{gather}
and, hence, $c_{\pi}(\nu') = c_{\pi}(\nu)$. This means that $c_{\pi}$ is actually an element of the semi-group ring $\ZZ\big[\oWLD\big]$ of the semi-group $\oWLD$ dual to $\oocW$.

This proves:

\begin{Proposition}\label{prop:center jac}There is an algebra homomorphism
\begin{gather}\label{eq:Phi center}
\sZ(\Jac(\SFL)) \longrightarrow \ZZ\big[\oWLD\big]
\end{gather}
induced by the tautological representation $\Phi$ \eqref{eq:jacobi weight duality map}.
\end{Proposition}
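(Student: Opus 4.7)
The plan is to follow the argument sketched in the paragraphs immediately preceding the proposition, organized in two steps. First, I would show that for any $\pi \in \sZ(\Jac(\SFL))$ the matrix $\Phi(\pi) \in \matel(\ZZ[\WLD])$ is scalar, that is, of the form $c_\pi \II$ with $c_\pi \in \ZZ[\WLD]$. Second, I would verify that $c_\pi$ actually lies in the subring $\ZZ[\oWLD] \subset \ZZ[\WLD]$, so that the assignment $\pi \mapsto c_\pi$ defines the desired homomorphism.

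For the scalar step, centrality $\pi e = e\pi$ for all $e \in \ocE$ gives $\Phi(\pi)\Phi(e) = \Phi(e)\Phi(\pi)$. Each matrix $\Phi_\nu(e)$, obtained after evaluation at $\nu \in \ocW$, has a unique non-zero entry $u^{\nu(e)}$ in position $(s(e),t(e))$. A direct computation of the two matrix products shows that commutation forces the off-diagonal entries of $\Phi_\nu(\pi)$ in column $s(e)$ and in row $t(e)$ to vanish, and forces the diagonal entries at $s(e)$ and $t(e)$ to be equal. Every vertex of $\gGL$ is the source (and target) of some edge, so all off-diagonal entries vanish; moreover the underlying quiver is strongly connected (any two vertices of $\gGL$ are linked by an oriented path, since $\gGL$ is embedded in the torus $\TT$ as the $1$-skeleton of a tiling by oriented convex polygons), so all diagonal entries agree. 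Assembling these evaluations gives a well-defined $c_\pi \in \ZZ[\WLD]$ with $\Phi(\pi) = c_\pi\II$.

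For the descent step, I would apply the conjugation relation \eqref{eq:nunu2} to $\pi$ itself, giving $\Phi_{\nu'}(\pi) = D\, \Phi_\nu(\pi)\, D^{-1}$ with $D = \diag\bigl((u^{r_\bv})_{\bv\in\ospv}\bigr)$ whenever $\nu - \nu' = \sum_\bv r_\bv \ga_\bv$. But $\Phi_\nu(\pi) = c_\pi(\nu)\II$ is already scalar, so $D$-conjugation leaves it unchanged, forcing $c_\pi(\nu') = c_\pi(\nu)$ for all $\nu \sim \nu'$. This exactly says that $c_\pi$ is $\sim$-invariant as a function on $\ocW$, and hence factors through the projection $\ocW \to \oocW$; via the identification of $\ZZ[\oWLD]$ with the $\sim$-invariant elements of $\ZZ[\WLD]$, induced by the injection $\oWLD \hookrightarrow \WLD$ dual to $\ocW \twoheadrightarrow \oocW$, this puts $c_\pi$ in $\ZZ[\oWLD]$. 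The multiplicativity of $\pi \mapsto c_\pi$ follows at once from $\Phi$ being an algebra homomorphism together with the fact that reading off the common diagonal entry of a scalar matrix is itself a ring map. The step I expect to require the most care is the first: verifying that strong connectivity of $\gGL$ holds in the required sense, and that the pointwise-vanishing/equality conclusions transfer from each evaluation $\Phi_\nu$ to $\Phi$ itself as an element of $\matel(\ZZ[\WLD])$ without having to separately argue injectivity of the collection of evaluations.
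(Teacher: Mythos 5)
Your proposal follows the paper's own argument essentially verbatim: scalarity of $\Phi(\pi)$ is forced by the commutation relations with the rank-one matrices $\Phi_\nu(e)$ together with connectivity of $\gGL$, and the descent of $c_\pi$ from $\ZZ\big[\WLD\big]$ to $\ZZ\big[\oWLD\big]$ comes from the conjugation relation \eqref{eq:nunu2}--\eqref{eq:nunu3} applied to $\pi$, exactly as in the text preceding the proposition. The details you add (vanishing of off-diagonal entries in column $s(e)$ and row $t(e)$, equality of the diagonal entries at $s(e)$ and $t(e)$, and the caveat about passing from the evaluations $\Phi_\nu$ back to $\Phi$) are correct elaborations of steps the paper leaves implicit, not a different route.
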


\subsection{Weight realizations and Jacobi algebra}\label{sec:jacmasterweight}

\pph{} For a weight realization $(\nu_1,\nu_2,\nu_3)$ ``evaluation'' at $(\nu_1,\nu_2,\nu_3)$ defines a homomorphism of rings $\ZZ\big[\WLD\big]\rightarrow\ZZ[u_1,u_2,u_3]$. By combining this homomorphism with $\Phi$ we obtain an algebra homomorphism
\begin{gather}\label{eq:jacobi weight nu123}
\Phi_{\nu_1,\nu_2,\nu_3}\colon \ \Jac(\SFL) \longrightarrow \matel(\ZZ[u_1,u_2,u_3]),
\end{gather}
such that for every $e\in\ocE$ the only non-zero entry of the matrix $\Phi_{\nu_1,\nu_2,\nu_3}(e)$ is $u_1^{\nu_1(e)}u_2^{\nu_2(e)}u_3^{\nu_3(e)}$ in position $(s(e),t(e))$. So, in particular
\begin{gather*}
\Phi_{\nu_1,\nu_2,\nu_3}\bigg(\sum_{e\in\ocE} e\bigg) = \mav\big(u_1^{\nu_1}u_2^{\nu_2}u_3^{\nu_3}\big)
\end{gather*}
with matrix $\mav\big(u_1^{\nu_1}u_2^{\nu_2}u_3^{\nu_3}\big)$ as in \eqref{eq:Anu123}.

\medskip

\pph{} Let $(\nu_1,\nu_2,\nu_3)$ and $(\nu'_1,\nu'_2,\nu'_3)$ be weight realizations such that $\nu_1\sim\nu'_1$, $\nu_2\sim\nu'_2$, $\nu_3\sim\nu'_3$; see~\eqref{eq:equivalence matchings}. Then one can show as in \eqref{eq:nunu1}--\eqref{eq:nunu3} that there is a diagonal matrix $D$ such that
\begin{gather*}
\mav\big(u_1^{\nu'_1}u_2^{\nu'_2}u_3^{\nu'_3}\big) = D\cdot\mav\big(u_1^{\nu_1}u_2^{\nu_2}u_3^{\nu_3}\big)\cdot D^{-1}.
\end{gather*}
This means that the representations $\Phi_{\nu_1,\nu_2,\nu_3}$ and $\Phi_{\nu'_1,\nu'_2,\nu'_3}$ are isomorphic.

\medskip

\pph{}\label{pph:faithful representation}%
The matrix $\mav\big(u_1^{\nu_1}u_2^{\nu_2}u_3^{\nu_3}\big)$ contains the complete information on the edge vectors with which one can draw the quiver $\gG$ embedded in $\RR^2$. More precisely, consider a path $\bp=(e_1,\ldots,e_k)$ and its subpaths $\bp_j=(e_1,\ldots,e_j)$ for $j=1,\ldots,k$. The path $\bp_j$ corresponds to the monomial
\begin{gather*}
\prod_{r=1}^j u_1^{\nu_1(e_r)}u_2^{\nu_2(e_r)}u_3^{\nu_3(e_r)}
\end{gather*}
in the $(s(e_1),t(e_j))$-entry of the matrix $\bigl(\mav\big(u_1^{\nu_1}u_2^{\nu_2}u_3^{\nu_3}\big)\bigr)^j$. In this way one sees the actual path~$\bp$ as it runs through the end points of the subpaths $\bp_j$, $j=1,\ldots,k$. Thus one obtains from $\mav\big(u_1^{\nu_1}u_2^{\nu_2}u_3^{\nu_3}\big)$ the paths on $\gGL$, the boundary cycles of the polygons and the period lattice. Since this is all one needs for \eqref{eq:Jacobi}--\eqref{eq:jerel} we conclude:

\begin{Theorem}\label{thm:faithful}The algebra homomorphisms
\begin{align*} 
\Phi_{\nu_1,\nu_2,\nu_3}\colon \ \Jac(\SFL) &\longrightarrow \matel(\ZZ[u_1,u_2,u_3]),\\
\Phi\colon \ \Jac(\SFL) &\longrightarrow \matel\big(\ZZ\big[\WLD\big]\big),\\
\sZ(\Jac(\SFL)) &\longrightarrow \ZZ\big[\oWLD\big]
\end{align*}
in \eqref{eq:jacobi weight nu123}, \eqref{eq:jacobi weight duality map}, \eqref{eq:Phi center} are injective.
\end{Theorem}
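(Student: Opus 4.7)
The plan is to deduce all three injectivity statements from a single claim: the faithfulness of $\Phi_{\nu_1,\nu_2,\nu_3}$. Evaluation at $(\nu_1,\nu_2,\nu_3)$ defines a ring homomorphism $\ZZ[\WLD]\to\ZZ[u_1,u_2,u_3]$, and by construction in Section \ref{sec:jacmasterweight} the map $\Phi_{\nu_1,\nu_2,\nu_3}$ is $\Phi$ followed entrywise by this evaluation. Hence injectivity of $\Phi_{\nu_1,\nu_2,\nu_3}$ implies injectivity of $\Phi$. For the center map, Proposition \ref{prop:center jac} shows that $\Phi(\pi)=c_\pi\,\II$ with $c_\pi\in\ZZ[\oWLD]$ for every $\pi\in\sZ(\Jac(\SFL))$, and the homomorphism in \eqref{eq:Phi center} is precisely $\pi\mapsto c_\pi$; if this image vanishes then $\Phi(\pi)=0$, and injectivity of $\Phi$ forces $\pi=0$.

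It remains to prove injectivity of $\Phi_{\nu_1,\nu_2,\nu_3}$. The homomorphism is well-defined because a weight realization has the property that each $\nu_i$ sums to $\deg\nu_i$ on every cycle of $\gs_0$ and $\gs_1$; consequently the two words appearing on either side of $\sD^\circ(e)=0$ have identical $(\nu_1,\nu_2,\nu_3)$-totals and map to the same monomial once $e$ is deleted. A path $\bp=(e_1,\ldots,e_k)$ on $\gGL$ is sent to the elementary matrix with unique nonzero entry $\prod_r u_1^{\nu_1(e_r)}u_2^{\nu_2(e_r)}u_3^{\nu_3(e_r)}$ in position $(s(\bp),t(\bp))$, so proving injectivity reduces to the following claim: two paths sharing the same source, target, and $(\sum\nu_1,\sum\nu_2,\sum\nu_3)$-totals are equivalent modulo the Jacobi ideal.

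To establish the claim I would use the planar geometry afforded by the weight realization. Equality of totals forces $\V(\bp)=\V(\bp')$, so the lifts $\widetilde\bp,\widetilde\bp'$ to the universal cover $\RR^2$ can be chosen with identical endpoints. Because $\RR^2$ is simply connected, the closed 1-chain $\widetilde\bp-\widetilde\bp'$ bounds an integer 2-chain supported on the black and white polygonal faces of the tiling determined by $\V$. Each elementary slide of the path across a single face corresponds to one application of a relation $\sD^\circ(e)=0$ at an edge of that face, and the equality of $\nu_3$-totals supplies the bookkeeping needed to ensure that all faces of the 2-chain can be exhausted by such moves. Iterating these moves transforms $\widetilde\bp$ into $\widetilde\bp'$, so $\bp$ and $\bp'$ represent the same element of $\Jac(\SFL)$.

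The main obstacle is this last step: turning the homological identity ``$\widetilde\bp-\widetilde\bp'$ bounds'' into an explicit finite sequence of Jacobi moves. The strict convexity required in Definition \ref{def:weight realization} is essential here, because it guarantees that every face of the tiling is a topological disc whose entire boundary is a single cycle of $\gs_0$ or $\gs_1$ in $\gGL$; the local combinatorial operation of substituting the white-side word at an edge by its black-side counterpart then matches exactly the geometric operation of absorbing one face into the 2-chain. This is precisely the content of paragraph \ref{pph:faithful representation}, where it is observed that from $\mav\big(u_1^{\nu_1}u_2^{\nu_2}u_3^{\nu_3}\big)$ one recovers the paths on $\gGL$, the boundary cycles of the polygons, and the period lattice — i.e.\ the full data on which $\Jac(\SFL)$ depends.
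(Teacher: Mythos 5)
Your reduction steps are sound and match the logical skeleton the paper relies on: injectivity of $\Phi$ follows from injectivity of $\Phi_{\nu_1,\nu_2,\nu_3}$ because the latter is $\Phi$ followed by evaluation, and injectivity on the center follows from $\Phi(\pi)=c_\pi\,\II$ together with injectivity of $\Phi$. You are also right that everything hinges on the single claim that two paths with the same source, target and $(\nu_1,\nu_2,\nu_3)$-totals coincide in $\Jac(\SFL)$, since the Jacobi ideal is spanned by differences of paths and so $\Jac(\SFL)$ is free abelian on the resulting equivalence classes. Where you diverge from the paper is in how this claim is attacked: the paper's argument (the paragraph preceding the theorem) is a \emph{reconstruction} argument --- from $\mav\big(u_1^{\nu_1}u_2^{\nu_2}u_3^{\nu_3}\big)$ and its powers one traces every path through $\RR^2$ and recovers the quiver, the boundary cycles and the period lattice, ``which is all one needs'' --- whereas you propose a \emph{homological} argument on the universal cover. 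Yours is the standard strategy in the dimer-model literature and is arguably more transparent about what actually has to be proved.

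The gap you flag at the end is, however, a genuine one, and it is the whole difficulty. The relation $\sD^\circ(e)$ does not implement a slide of a path across a single face: it replaces the consecutive subword consisting of the \emph{entire} boundary of the white face at $e$ minus the edge $e$ by the entire boundary of the black face at $e$ minus $e$. So a move is available only when a path happens to contain such a full face-boundary-minus-one-edge as a consecutive subword, and the fact that $\widetilde\bp-\widetilde\bp'$ bounds a $2$-chain does not by itself produce a sequence of such moves exhausting that chain; the usual proofs first reduce every path to a power of the central boundary loop times a minimal path and then invoke a consistency or cancellation hypothesis (zigzag conditions) to compare minimal paths. This is exactly the content of cancellativity in Bocklandt's sense, which the paper's Remark~\ref{rem:cancellative} only says ``seems'' to hold for Zhegalkin zebra superpotentials, while conceding that some of these models fail the stronger consistency conditions. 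So your argument, as written, does not close; but in fairness the paper's own proof is equally incomplete at precisely this point --- knowing that the target data determines the quiver with potential up to isomorphism does not show that two distinct path classes in $\Jac(\SFL)$ cannot have equal images. If you want a complete proof you should either verify a consistency/cancellation condition for the class of superpotentials under consideration or exhibit the explicit normal form for paths; the homological picture alone is not enough.
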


\begin{Remark}\label{rem:cancellative}Theorem \ref{thm:faithful} is closely related to Theorem~3.17 and Definition~3.10 in~\cite{Bo}. So it seems that the quivers with potential coming from Zhegalkin zebra functions are \textit{cancellative} in the sense of \cite[Definition~3.10]{Bo}.

Theorem~3.20 in \cite{Bo} states that $\Jac(\SFL)$ is a non-commutative crepant resolution of the $3$-dimensional Gorenstein singularity $\Spec\bigl(\sZ(\Jac(\SFL))\bigr)$ if the quiver $\gGL$ with superpoten\-tial~$\SFL$ is cancellative.

On the other hand, we do have examples of Zhegalkin zebra functions for which the quiver with potential is not \textit{consistent} in the sense of \cite[Theorems~1.37 and 3.11]{Bo}.
\end{Remark}

\begin{Remark}\label{rem:generating series} The above method of generating paths corresponds to the series expansion
\begin{gather*}
\bigl(\II-\mav\big(u_1^{\nu_1}u_2^{\nu_2}u_3^{\nu_3}\big) \bigr)^{-1} = \sum_{j=0}^\infty \bigl(\mav\big(u_1^{\nu_1}u_2^{\nu_2}u_3^{\nu_3}\big)\bigr)^j.
\end{gather*}
Since every entry of the matrix $\mav\big(u_1^{\nu_1}u_2^{\nu_2}u_3^{\nu_3}\big)$ is divisible by $u_1u_2u_3$ the series on the right-hand side converges in the topology provided by the powers of the principal ideal $u_1u_2u_3\ZZ[u_1,u_2,u_3]$.
\end{Remark}

\section{Symptoms of mirror symmetry}\label{sec:mirror symmetry}

In this section we put the results of Sections \ref{sec:geoweights} and \ref{sec:Jacobi} into the perspective of mirror symmetry. There are evidently two sides to the story with the graphs $\gGLD$ and $\gGL$ on different sides and the S-quad-graph $\cD_{\cF,\gL}$ providing a ``mirror correspondence''. The appearance of the semi-group ring $\ZZ[\ocW]$ of $\ocW$ on one side and the semi-group ring $\ZZ\big[\WLD\big]$ of the dual semi-group $\WLD = \Hom(\ocW, \ZZ_{\geq0})$ on the other side is reminiscent of \textit{mirror symmetry as in the work of Batyrev and Borisov} \cite{Ba,BB}.

\textbf{The $\boldsymbol{\gGLD}$-side:} The semi-group $\ocW$ is to be put on the $\gGLD$-side, because (see Section~\ref{pph:WGLD})
\begin{gather*}
\ho^1\big(\gGLD, \ZZ\big) = \bigl\{\nu'-\nu''\in\ZZ^{\ocE}\,|\,\nu',\nu''\in\ocW, \, \deg\nu'=\deg\nu''\bigr\} .
\end{gather*}
$\gGL$ induces an equivalence relation $\sim$ on $\ocW$ (cf.\ Section~\ref{pph:equivalence}):
\begin{gather*}
\nu\sim\nu' \Longleftrightarrow \exists\, r\colon \ospv\rightarrow\ZZ \ \text{s.t.} \ \forall\, e\in\ocE\colon \nu(e)-\nu'(e)=r(t(e))-r(s(e)).
\end{gather*}
This corresponds to the equivalence relation on $\ho^1\big(\gGLD,\ZZ\big)$ given by the subgroup generated by the maps $\ga_\bv\colon \ocE\rightarrow\ZZ$ defined in \eqref{eq:vertex cycle}. Through equations~\eqref{eq:D graph in torus}--\eqref{eq:dual face} it can be traced back to the inclusion $\gGLD\hookrightarrow\RR^2/\gL$.

The set of $\sim$-equivalence classes in $\ocW$ is denoted by $\oocW$; see \eqref{eq:Wsim}. The diagram in equation~\eqref{eq:specproj} shows the various schemes and their interrelations associated with the (graded) semi-groups $\ocW$ and $\oocW$ and the group $\ho^1\big(\gGLD, \ZZ\big)$.

\begin{Remark}\label{rem:deformation actions} Notice the analogy between formula~\eqref{eq:deform weights} for the deformations of weight realizations and the action of $\CC^*$ on $\Spec(\ZZ[\ocW])$ and that of $\ho^1\big(\gGLD,\CC^*\big)$ on $\Proj(\ZZ[\ocW])$.
\end{Remark}

\textbf{The $\boldsymbol{\gGL}$-side:} The counterpart of $\ocW$ on the $\gGL$-side is the \textit{Jacobi algebra} $\Jac(\SFL)$ of the superpotential $\SFL$. This is the quotient of the path algebra $\ZZ[\pad(\gGL)]$ of the quiver~$\gGL$ by a~two-sided ideal provided by the permutations~$\gs_0$ and~$\gs_1$; see~\eqref{eq:Jacobi} for a precise definition based on~\cite{Bo,Br}. The Jacobi algebra comes with an injective algebra homomorphism, the \textit{tautological representation},
\begin{gather*}
\Phi\colon \ \Jac(\SFL) \longrightarrow \matel\big(\ZZ\big[\WLD\big]\big)
\end{gather*}
into the algebra of $|\ospv|\times|\ospv|$-matrices over the semi-group ring $\ZZ\big[\WLD\big]$ of the semi-group $\WLD$ dual to $\ocW$. It restricts to an injective algebra homomorphism
\begin{gather*}
\sZ(\Jac(\SFL))\longrightarrow\ZZ\big[\oWLD\big]
\end{gather*}
from the center of the Jacobi algebra into the semi-group ring $\ZZ\big[\oWLD\big]$; see Theorem \ref{thm:faithful}.

\textbf{The $\boldsymbol{\cD_{\cF,\gL}}$-correspondence:} A weight realization $(\nu_1,\nu_2,\nu_3)$ gives rise to a tiling of $\RR^2$ by quadrangles and, hence, for every $e\in\ocE$ vectors $\sfq_\bb(e)$ and $\sfq_\bw(e)$ as in Figs.~\ref{fig:quadrangle3} and~\ref{fig:quadrangle2}. The vector $\sfq_\bb(e)-\sfq_\bw(e)$ is the diagonal from $w(e)$ to $b(e)$ in the quadrangle.
\begin{gather*}
\fbox{\parbox[t]{12cm}{\textit{In this way the map $\sfq_\bb-\sfq_\bw\colon \ocE\rightarrow\RR^2$ realizes the duality between the graphs $\gGL$ and $\gGLD$.}}}
\end{gather*}
For a perfect matching $\sm$ we set, with the notations as in \eqref{eq:rho01},
\begin{gather*}
\tau_{\sm,0}=(\II+\vgs_{\sm,0})(\II-\vgs_{\sm,0})^{-1} ,\qquad \tau_{\sm,1}=(\II+\vgs_{\sm,1})(\II-\vgs_{\sm,1})^{-1} .
\end{gather*}
Then $\tau_{\sm,0}$ and $\tau_{\sm,1}$ are unipotent matrices of size $|\ocE|\times|\ocE|$ with entries in $\ZZ_{\geq0}$. They define injective homomorphisms $\sT_{\sm,0}$ and $\sT_{\sm,1}$ of semi-groups (see~\eqref{eq:Wduality maps})
\begin{gather*}
\sT_{\sm,j}\colon \ \ocW\longrightarrow\WLD ,\qquad \bigl(\sT_{\sm,j}(\nu)\bigr)(\nu')=\nu'^t\cdot\tau_{\sm,j}\cdot\nu .
\end{gather*}
Here $\nu,\nu'\in\ocW$ are viewed as column vectors.

Using the vectors $\sfq_\bb(e)$ and $\sfq_\bw(e)$ we define maps $\mathsf{Q}_\bb, \mathsf{Q}_\bw\colon \ocW\longrightarrow\RR^2$,
\begin{gather*}
\mathsf{Q}_\bb(\nu)=\sum_{e\in\ocE}\nu(e)\sfq_\bb(e) ,\qquad \mathsf{Q}_\bw(\nu)=\sum_{e\in\ocE}\nu(e)\sfq_\bw(e) .
\end{gather*}
Formulas \eqref{eq:weight span I} and \eqref{eq:mqb}--\eqref{eq:mqw} with $\V=(\nu_1-\nu_3, \nu_2-\nu_3)$ and $\theta=\frac{1}{\deg\nu_3}\nu_3$ then show that the maps $\mathsf{Q}_\bb$, $\mathsf{Q}_\bw$ can be expressed as linear combinations of
\begin{gather*}
\sT_{\sm,0}(\nu_1), \ \sT_{\sm,0}(\nu_2), \ \sT_{\sm,0}(\nu_3), \ \sT_{\sm,1}(\nu_1), \ \sT_{\sm,1}(\nu_2), \ \sT_{\sm,1}(\nu_3).
\end{gather*}

Every perfect matching $\sm$ yields two matrices $\tau_{\sm,0}$ and $\tau_{\sm,1}$ with entries in $\ZZ_{\geq0}$ and determinant $1$. Products and transposes of such matrices also have entries in $\ZZ_{\geq0}$ and determinant~$1$.
\begin{gather*}
\fbox{\parbox[t]{12cm}{\textit{In this way one obtains lots of maps from $\ocW$ to $\WLD$. It would be nice if these can be used to built a correspondence between the toric geometry of $\Proj(\ZZ[\ocW])$ on the $\gGLD$-side and the non-commutative algebraic geometry of $\Jac(\SFL)$ on the $\gGL$-side.}}}
\end{gather*}
We leave further analysis of this structure for future research.

\section{Practical matters}\label{sec:practical}

In this section I describe some methods for using a computer to draw the tiling associated with a Zhegalkin zebra polynomial $\cF$, compute the superpotential $\SFL$ and check some conditions. Although the ideas work quite generally the exposition here is strongly influenced by my habit of using \textsc{matlab}.

\subsection[How to draw the picture of the tiling of $\cF$]{How to draw the picture of the tiling of $\boldsymbol{\cF}$}\label{subsec:draw}

The defining formula for a \ZZF{} $\cF$ can be rewritten as formula~\eqref{eq:evaluate} (below) with which one can easily draw the picture of the tiling. For the description of formula~\eqref{eq:evaluate} we define the function $\neg\colon \RR\rightarrow\{0,1\}$ by $\neg(r)=1$ if $r=0$ and $\neg(r)=0$ if $r\neq0$, we identify $0,1\in\FF_2$ with $0,1\in\RR$ and we interpret in the matrix operations the matrix entries as elements of $\RR$.

Extract from the defining formula for $\cF$ the $2\times n$-matrix $V$ of which the columns are the used frequency vectors. Put the coordinates of the points at which the function should be evaluated as rows in a $k\times2$-matrix $X$. Compute the matrix $2 X\cdot V$ and apply the function $\lfloor\;\rfloor\bmod 2$ to its entries. In short hand notation this can be summarized as $\lfloor 2 X\cdot V\rfloor \bmod 2$.

Extract from the defining formula for $\cF$ the $n\times m$-matrix $M$ with entries $0, 1$ of which the columns correspond to the monomials in the formula. Note that a monomial evaluates to $1$ if and only if all variables it involves have value $1$. This leads to the formula $\neg\bigl((\neg(\lfloor 2 X\cdot V\rfloor \bmod 2))\cdot M\bigr)$ for evaluating the monomials. In this formula the function $\neg$ is applied to the entries of the matrices.

The next and final step is to take the sum of the columns (or, equivalently, multiply on the right by the column vector $\mathbf{1}$ consisting of $m$ $1$'s) and reduce the result modulo $2$. The result is a vector of $0$'s and $1$'s which gives the value of $\cF$ at the points listed in $X$. Thus the whole evaluation process reads:
\begin{gather}\label{eq:evaluate}
\cF(X) = \big(\big( \neg\big(\big( \neg(\lfloor 2 X\cdot V\rfloor \bmod 2)\big)\cdot M\big)\big)\cdot\mathbf{1}\big)\bmod 2.
\end{gather}

\begin{figure}[t]\centering

\raisebox{25mm}[0pt][0pt]{$\begin{array}{@{}l}
V = (\fv_2, \fv_3, \fv_4, \fv_6, 2\fv_4)\\[1ex]
M = \left(
\begin{matrix}
1&0&0&0&0\\
0&1&0&0&1\\
0&0&1&0&0\\
0&0&0&1&1\\
0&0&0&0&1
\end{matrix}
\right)\end{array}$} \qquad \includegraphics[width=5.5cm]{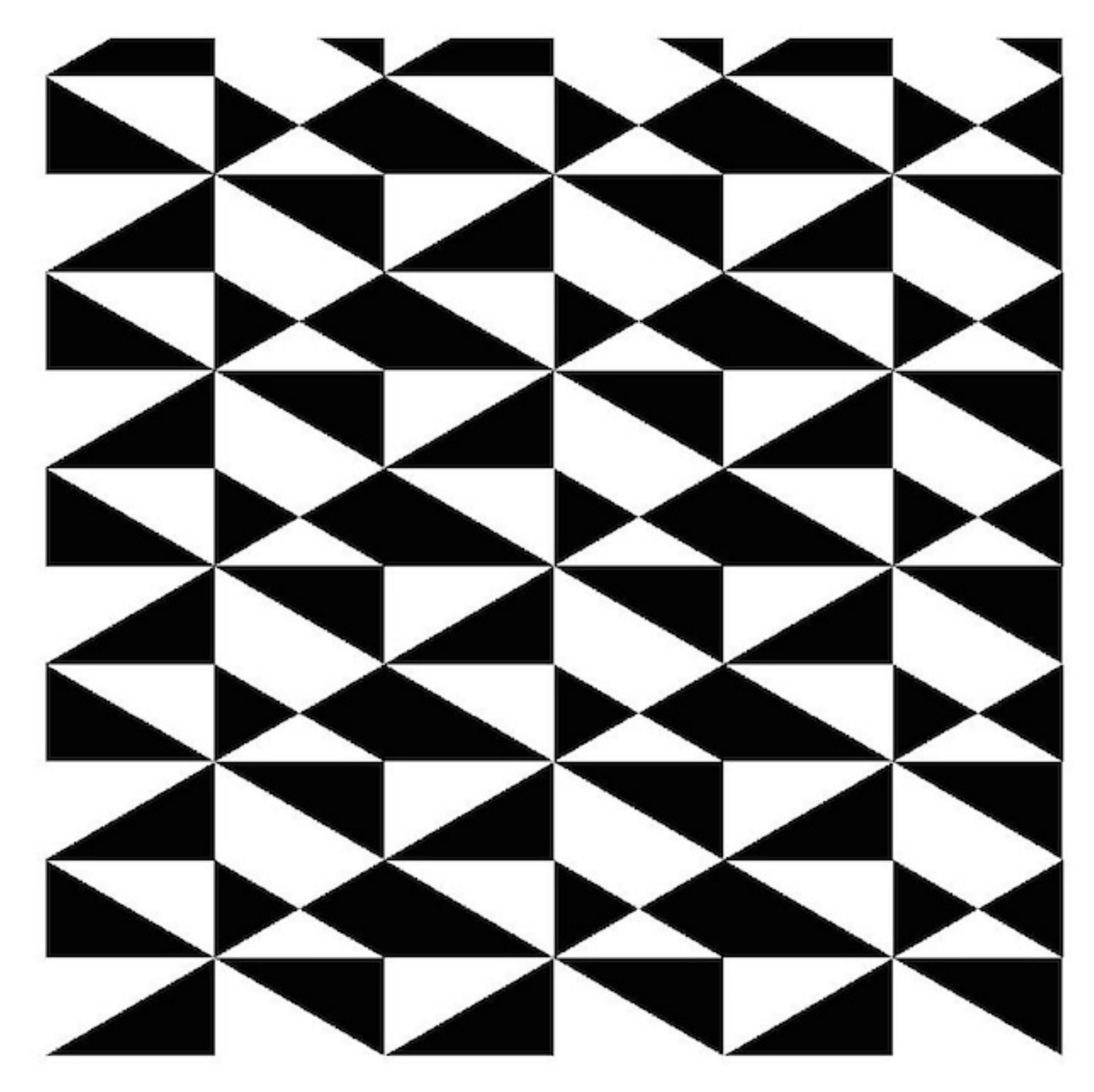}

\caption{Matrices $V$, $M$ and picture for $\cF=\ze{21}+\ze{31}+\ze{41}+\ze{61}+\ze{31}\ze{42}\ze{61}$.}\label{fig:model12b1}
\end{figure}

\subsection[How to compute the superpotential $\SFL$ and the realization $\V_\cF$]{How to compute the superpotential $\boldsymbol{\SFL}$ and the realization $\boldsymbol{\V_\cF}$}\label{subsec:compot}

We now explain how one can compute the superpotential $\SFL=(\ocE,\ogs_0,\ogs_1)$ from the defining formula of the \ZZF{} $\cF$ and the lattice $\gL\subset\auto(\cF)$.

\medskip

\pph{}\label{step 1} The first step is to multiply the vectors $\fv_1,\ldots,\fv_6$ by the diagonal matrix $\operatorname{diag}(2, 2/\sqrt{3})$. This results in the new basic frequency vectors
\begin{gather*}
\tilde{\fv}_1=\left( \begin{matrix}-3\\ \hphantom{-}1\end{matrix}\right),\quad
\tilde{\fv}_2=\left(\begin{matrix}-1\\ \hphantom{-}1\end{matrix}\right),\quad
\tilde{\fv}_3=\left(\begin{matrix}0\\ 2\end{matrix}\right),\quad
\tilde{\fv}_4=\left(\begin{matrix}1\\ 1\end{matrix}\right),\quad
\tilde{\fv}_5=\left(\begin{matrix}3\\ 1\end{matrix}\right),\quad
\tilde{\fv}_6=\left(\begin{matrix}2\\ 0\end{matrix}\right),
\end{gather*}
with which we reinterpret the frequency vectors in the defining formula for $\cF$. This clearly does not change the combinatorial structure of the picture, but it allows to do most computations with integer arithmetic.

As in Section~\ref{subsec:draw} we let the matrix $V$ be such that its columns are the frequency vectors of~$\cF$. The edges in the picture lie on lines with equation $2\bx\cdot\fv = m$ with $m\in\ZZ$ and $\fv$ a column of $V$. The vertices are intersection points of two such lines $2\bx\cdot\fv = m$ and $2\bx\cdot\fv' = m'$ with linearly independent $\fv$ and $\fv'$. The coordinates of the intersection point are then rational numbers with denominators dividing the number $2|\det(\fv,\fv')|/|\mathrm{gcd}(\textrm{entries of }\fv,\fv')|$. Let $K$ denote the least common multiple of these numbers for $(\fv,\fv')$ running over all pairs of linearly independent frequency vectors of~$\cF$.

Fix a sufficiently large\footnote{$N$ should at least be so large that the periodicity lattice which we want to implement later has two basis vectors in $\RR^2$ with non-negative coordinates $\leq \frac{1}{3}N$.} positive integer~$N$. Let $X$ be the $N^2\times2$ -matrix with set of rows $\{(n,m)\in\ZZ^2 \,|\, 0\leq n,m< N\}$ such that row $(n,m)$ is above row $(n',m')$ if $n+m\sqrt{2}<n'+m'\sqrt{2}$.

We want to find among the rows of $\frac{1}{K}X$ those which are intersection points of two lines $2\bx\cdot\fv = m$ and $2\bx\cdot\fv' = m'$ with linearly independent $\fv$ and $\fv'$. First we determine which entries of the matrix $2X\cdot V$ are divisible by $K$; with the notation of Section~\ref{subsec:draw} this means the entries~$1$ in the matrix $\neg((2X\cdot V)\bmod K)$. The intersection points correspond to the rows with a $1$ for at least two linearly independent frequency vectors. Let $X^*$ denote the submatrix of $X$ given by this selection of rows.
Correspondingly we have the two matrices
\begin{gather*}
2X^*\cdot V\qquad\textrm{and}\qquad \neg((2X^*\cdot V)\bmod K) .
\end{gather*}
Looking at these two matrices column by column one easily determines what are the relevant lines and how the points in $X^*$ divide these lines into closed intervals with non-overlapping interiors. For this the initial ordering of the elements of $X$ is very useful. Since it can happen that the same interval is produced from two different columns we remove the duplicates retaining for each interval exactly one copy. We list the intervals thus found by giving for each the two endpoints.

The above calculations were done with integer arithmetic. In the next steps we have to work in $\RR^2$ and must therefore divide for all intervals in our list the coordinates of the endpoints by~$K$. For each interval in the list, say~$I$, take on both sides of the interval a point close to the midpoint and evaluate $\cF$ at these two points using formula~\eqref{eq:evaluate}. Remove $I$ from the list if $\cF$ has at these two points the same value. What is left is a list of intervals separating black and white regions. For each of these intervals we call one endpoint the source and the other the target, so that going along the interval from source to target the black region is on the right.

We make a new list with for each interval $I$ besides the endpoints $s(I)$ and $t(I)$ also the midpoint $m(I)=\frac{1}{2}(s(I)+t(I))$ and the vector $\vec{I}=t(I)-s(I)$:
\begin{gather}\label{eq:edgelist}
\bigl\{ m(I) , s(I) , t(I) , \vec{I} \,\bigr\}_{I} .
\end{gather}

\pph{} \label{pph:auto} We use the list \eqref{eq:edgelist} to compute the group $\auto(\cF)$ of translations which leave the function $\cF$ invariant. Let $I_1$ be the first item in this list. Determine all $I$ with $\vec{I}=\vec{I_1}$ and compute for each of these the vector $T(I)=m(I)-m(I_1)$. In order to check which of these vectors $T(I)$ leave $\cF$ invariant we take at random a point $\bx$ in the unit square and compute $\cF(\bx+T(I))$ and $\cF(\bx)$. We remove $I$ if $\cF(\bx+T(I))\neq\cF(\bx)$. The remaining vectors $T(I)$ are then tested against a new randomly chosen $\bx$. The vectors which are left after repeating this procedure a good number of times generate $\auto(\cF)$. From these generators we choose a basis for $\auto(\cF)$.

\medskip

\pph{}\label{pph:permutations} Having a basis for $\auto(\cF)$ we can specify the desired periodicity lattice $\gL$ by an integer $(2\times2)$-matrix with non-zero determinant. In order to make the reduction modulo $\gL$ we fix a~basis for $\gL$ and write~$m(I)$,~$s(I)$,~$t(I)$ and~$\vec{I}$ in coordinates with respect to this basis. Reduction modulo $\gL$ is achieved by taking the fractional parts of the coordinates of~$m(I)$,~$s(I)$,~$t(I)$ leaving~$\vec{I}$ unchanged. This results in the list
\begin{gather}\label{eq:rededgelist}
\bigl\{ m(I)-\lfloor m(I)\rfloor , s(I)-\lfloor s(I)\rfloor , t(I)-\lfloor t(I)\rfloor , \vec{I}\, \bigr\}_{I} .
\end{gather}
The vectors in \eqref{eq:rededgelist} are given by their coordinates with respect to the chosen basis of $\gL$. Converting this back to the original coordinates on $\RR^2$ and multiplying by $2K$ turns \eqref{eq:rededgelist} into a list of quadruples of elements of $\ZZ^2$. The first three elements in these quadruples have non-negative coordinates $<2K$ and can be made into integers using the injective map $\{0,\ldots,2K-1\}\times\{0,\ldots,2K-1\}\rightarrow\NN$, $(a,b)\mapsto a+2bK$. These integers can be used as labels to identify the midpoint, source and target of the edge.

The list which thus results from \eqref{eq:rededgelist} contains many duplicates, which we remove. What remains is a list of labeled edges $e$ with labeled source $s(e)$ and target $t(e)$ and the edge vector~$\ve(e)$. It can still happen that this list contains edges $e$ for which there is only one edge $e'$ with $s(e')=t(e)$, which case $e$ and $e'$ must be fused. This will be taken care of in the final part of the next step.

\begin{figure}[t]\centering
\includegraphics{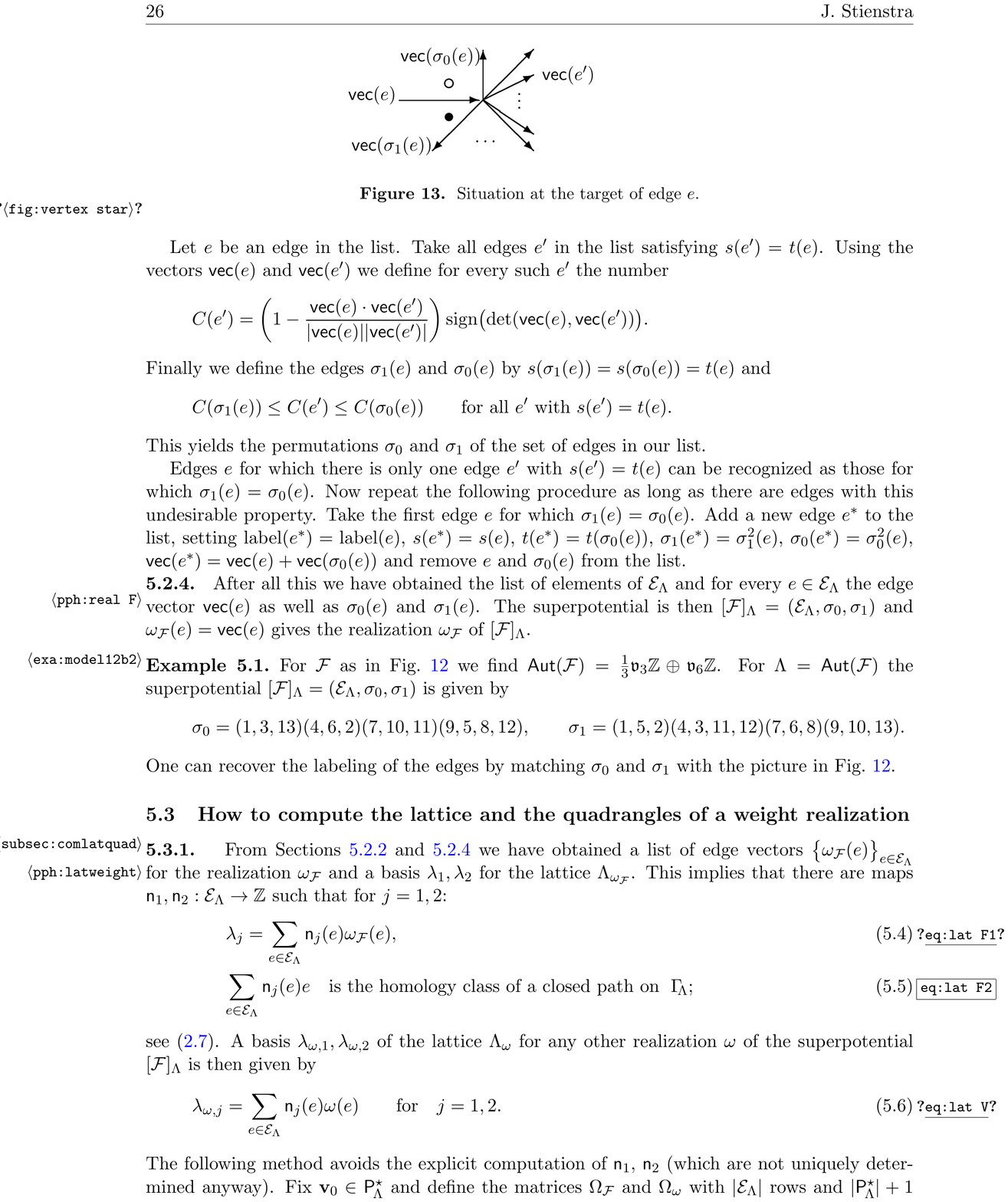}
\caption{Situation at the target of edge $e$.}\label{fig:vertex star}
\end{figure}

Let $e$ be an edge in the list. Take all edges $e'$ in the list satisfying $s(e')=t(e)$. Using the vectors $\ve(e)$ and $\ve(e')$ we define for every such $e'$ the number
\begin{gather*}
C(e') = \left( 1 - \frac{\ve(e)\cdot\ve(e')}{|\ve(e)| |\ve(e')|}\right) \operatorname{sign}\bigl(\det(\ve(e),\ve(e'))\bigr) .
\end{gather*}
Finally we define the edges
$\ogs_1(e)$ and $\ogs_0(e)$ by $s(\ogs_1(e)) = s(\ogs_0(e)) = t(e)$ and
\begin{gather*}
C(\ogs_1(e))\leq C(e')\leq C(\ogs_0(e))\qquad\text{for all $e'$ with $s(e')=t(e)$.}
\end{gather*}
This yields the permutations $\ogs_0$ and $\ogs_1$ of the set of edges in our list.

Edges $e$ for which there is only one edge $e'$ with $s(e')=t(e)$ can be recognized as those for which $\ogs_1(e) = \ogs_0(e)$. Now repeat the following procedure as long as there are edges with this undesirable property. Take the first edge $e$ for which $\ogs_1(e) = \ogs_0(e)$. Add a new edge $e^*$ to the list, setting $\text{label}(e^*)=\text{label}(e)$, $s(e^*)=s(e)$, $t(e^*)=t(\ogs_0(e))$, $\ogs_1(e^*) = \ogs_1^2(e)$, $\ogs_0(e^*) = \ogs_0^2(e)$, $\ve(e^*)=\ve(e)+\ve(\ogs_0(e))$ and remove $e$ and $\ogs_0(e)$ from the list.

\medskip

\pph{}\label{pph:real F} After all this we have obtained the list of elements of $\ocE$ and for every $e\in\ocE$ the edge vector $\ve(e)$ as well as $\ogs_0(e)$ and $\ogs_1(e)$. The superpotential is then $\SFL=(\ocE,\gs_0,\gs_1)$ and $\V_{\cF}(e)=\ve(e)$ gives the realization $\V_{\cF}$ of $\SFL$.

\begin{Example}\label{exa:model12b2}For $\cF$ as in Fig.~\ref{fig:model12b1} we find $\auto(\cF)=\frac{1}{3}\fv_3\ZZ\oplus\fv_6\ZZ$. For $\gL=\auto(\cF)$ the superpotential $\SFL=(\ocE,\gs_0,\gs_1)$ is given by
\begin{gather*}
\gs_0=(1,3,13)(4,6,2)(7,10,11)(9,5,8,12) ,\qquad \gs_1=(1,5,2)(4,3,11,12)(7,6,8)(9,10,13) .
\end{gather*}
One can recover the labeling of the edges by matching $\gs_0$ and $\gs_1$ with the picture in Fig.~\ref{fig:model12b1}.
\end{Example}

\subsection{How to compute the lattice and the quadrangles of a weight realization}\label{subsec:comlatquad}

\pph{}\label{pph:latweight} From Sections~\ref{pph:auto} and \ref{pph:real F} we have obtained a list of edge vectors $\{\V_{\cF}(e)\}_{e\in\ocE}$ for the realization $\V_{\cF}$ and a basis $\gl_1,\gl_2$ for the lattice $\gL_{\V_{\cF}}$. This implies that there are maps $\sn_1,\sn_2\colon \ocE\rightarrow\ZZ$ such that for $j=1,2$:
\begin{gather}*\label{eq:lat F1}
\gl_j=\sum_{e\in\ocE} \sn_j(e)\V_{\cF}(e) ,\nonumber\\
\label{eq:lat F2}
\sum_{e\in\ocE} \sn_j(e) e\quad\textrm{is the homology class of a closed path on} \ \gGL ;
\end{gather}
see \eqref{eq:loop class}. A basis $\gl_{\V,1},\gl_{\V,2}$ of the lattice $\gL_\V$ for any other realization $\V$ of the superpoten\-tial~$\SFL$ is then given by
\begin{gather*}
\gl_{\V,j}=\sum_{e\in\ocE} \sn_j(e)\V(e) \qquad \textrm{for} \quad j=1,2.
\end{gather*}
The following method avoids the explicit computation of $\sn_1$, $\sn_2$ (which are not uniquely determined anyway). Fix $\bv_0\in\ospv$ and define the matrices $\EV_\cF$ and $\EV_\V$ with $|\ocE|$ rows and $|\ospv|+1$ columns by: the first two columns of $\EV_\cF$ are $\V_\cF$ and the first two columns of $\EV_\V$ are $\V$; the last $|\ospv|-1$ columns of both $\EV_\cF$ and $\EV_\V$ are $\ga_\bv$ with $\bv\in\ospv$, $\bv\neq\bv_0$; here $\ga_\bv$ is as in~\eqref{eq:vertex cycle}. Schematically:
\begin{gather*}
\EV_\cF=[ \V_\cF | A ] ,\qquad \EV_\V=[ \V | A ] .
\end{gather*}
Viewing $\sn_1$ and $\sn_2$ as row vectors we have for $j=1,2$
\begin{gather*}
\sn_j\cdot\EV_\cF=[ \gl_j | \nul ] ,\qquad \sn_j\cdot\EV_\V=[ \gl_{\V,j} | \nul ] .
\end{gather*}
Here we used that \eqref{eq:G homology cycle} and \eqref{eq:lat F2} imply $\sn_j\cdot A=\nul$. The (column) rank of both matrices $\EV_\cF$ and $\EV_\V$ is $|\ospv|+1$. So there is an invertible matrix $G$ with entries in $\QQ$ such that $\EV_\V=\EV_\cF\cdot G$. Putting things together we have for $j=1,2$:
\begin{gather*}
[ \gl_{\V,j} | \nul ]=[ \gl_j | \nul ]\cdot G .
\end{gather*}

\pph{}\label{pph:lifted vertices} Next we determine the vertices in the tiling of $\RR^2$ corresponding to a realization $\V$ of~$\SFL$. Fix $\bv_0\in\ospv$ and set $z_{\bv_0}=\nul\in\RR^2$. For $\bv\in\ospv$ with $\bv\neq\bv_0$ take a path $\bp_\bv$ in $\gGL$ starting at $\bv_0$ and ending at $\bv$, say $\bp_\bv=(e_1,\ldots,e_k)$, and set $z_\bv=\V(e_1)+\dots+\V(e_k)$. Then we obviously have:

\begin{Lemma}\label{lem:lifted vertices} The set of vertices in the tiling of $\RR^2$ which lie over the vertex $\bv$ of $\gGL \subset \RR^2/\gL_\V$ is precisely $z_\bv+\gL_\V$.
\end{Lemma}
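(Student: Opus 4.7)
The plan is to use that the quotient map $\pi\colon\RR^2\to\TT_\V=\RR^2/\gL_\V$ is a covering, so every fiber $\pi^{-1}(\bv)$ is a $\gL_\V$-torsor in $\RR^2$. The lemma then reduces to exhibiting a single lift, which will be the explicit point $z_\bv$, and then invoking the free transitive action of $\gL_\V$ on the fiber.

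First I would set up the base case: by definition $z_{\bv_0}=\nul$, and $\nul$ is (by choice) a vertex of the tiling of $\RR^2$ which projects to $\bv_0\in\gGL\subset\TT_\V$. Next I would argue by induction on the length $k$ of the chosen path $\bp_\bv=(e_1,\ldots,e_k)$. The key structural fact, already built into the definition of a realization, is that each directed edge $e$ of $\gGL$ embedded in $\TT_\V$ pulls back to a $\gL_\V$-invariant family of directed segments in $\RR^2$, each of which runs from some lift of $s(e)$ to a lift of $t(e)$ with displacement vector exactly $\V(e)$; $\gL_\V$-invariance is precisely the condition that makes $\V$ a realization of $\SFL$. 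Therefore, if $\V(e_1)+\dots+\V(e_{j-1})$ is a lift of $t(e_{j-1})=s(e_j)$, then adding $\V(e_j)$ produces a lift of $t(e_j)$. Iterating from $\bv_0$ to $\bv$ shows $z_\bv\in\pi^{-1}(\bv)$.

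For the equality $\pi^{-1}(\bv)=z_\bv+\gL_\V$, I would simply note that $\pi$ is a regular covering map with deck group $\gL_\V$ acting by translation, so any two lifts of the same point differ by an element of $\gL_\V$, and conversely every $\gL_\V$-translate of $z_\bv$ lies over $\bv$.

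There is no serious obstacle: the only thing to be careful about is that although the choice of path $\bp_\bv$ is not unique, the point $z_\bv$ itself \emph{is} a well-defined lift of $\bv$, and different choices of $\bp_\bv$ produce lifts in the same $\gL_\V$-coset (the difference of two such paths is a closed path on $\gGL$, whose image under $\V$ lies in $\gL_\V$ by \eqref{eq:lat F2} and \ref{pph:latweight}). Hence the right-hand side $z_\bv+\gL_\V$ depends only on $\bv$, as claimed.
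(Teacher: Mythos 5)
Your proposal is correct, and it is essentially the argument the paper has in mind: the paper states the lemma with no proof at all (``Then we obviously have\dots''), and your covering-space argument --- exhibiting $z_\bv$ as one lift by concatenating the edge vectors $\V(e_j)$ along $\bp_\bv$, then using that the fiber of $\RR^2\to\TT_\V$ over $\bv$ is a single $\gL_\V$-orbit --- is the standard justification being elided. Your remark that the coset $z_\bv+\gL_\V$ is independent of the choice of path (because a difference of two paths is a closed path whose edge-vector sum lies in $\gL_\V$) is a worthwhile addition that the paper leaves implicit.
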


\pph{}\label{sec:draw GD} For a realization $\V$ and a positive fractional matching $\theta$ we now compute for every polygon in the tiling the point which is the convex combination specified by $\theta$ of the midpoints of the edges of that polygon. In order to do this in an efficient way we fix a perfect matching~$\sm$. Consider a~black polygon $\bb\in\ospb$. Let $(e_1,\ldots,e_q)$ be the corresponding cycle of the permutation~$\gs_1$ written such that $e_1\in\sm$. Then the vertices of the polygon~$\bb$ are located at
\begin{gather*}
z_\bv+\sum_{j=1}^h\V(e_j)\qquad\textrm{for}\quad 1\leq h\leq q ,
\end{gather*}
where $\bv=s(e_1)$ and $z_\bv$ is as in Section~\ref{pph:lifted vertices}. The midpoints of its sides are
\begin{gather*}
z_\bv+\breuk{1}{2}\V(e_h)+\sum_{j=1}^{h-1}\V(e_j)\qquad\textrm{for}\quad 1\leq h\leq q .
\end{gather*}
The convex combination of these midpoints specified by $\theta$ is therefore
\begin{gather}\label{eq:marked point}
B_{\bb,\sm,\theta,\V}=z_\bv+\sum_{h=1}^q\theta(e_h)\left(\breuk{1}{2}\V(e_h)+\sum_{j=1}^{h-1}\V(e_j)\right).
\end{gather}
For a white polygon $\bw$ one can construct in the same way a point $W_{\bw,\sm,\theta,\V}$.

Using translations from the lattice $\gL_\V$ one subsequently obtains a marked point in every polygon in the tiling of $\RR^2$ and vectors connecting this point to the vertices of the polygon.

\medskip

\pph{}\label{pph:quad} Using the map $\gb_\bb\colon \ocE\rightarrow\ZZ$ from \eqref{eq:black cycle} and the matrix $\rho_{\sm,1}$ from~\eqref{eq:broken} we can rewrite~\eqref{eq:marked point} as
\begin{gather}\label{eq:marked point 2}
B_{\bb,\sm,\theta,\V} = z_\bv+\theta^t\cdot\diag(\gb_\bb)\cdot\big({-}\breuk{1}{2}\II+\rho_{\sm,1}\big)\cdot\V ,
\end{gather}
where on the right-hand side we view $\theta^t$ as a row vector and $\V$ as a $|\ocE|\times 2$-matrix with real entries. The source point of edge $e_k$ of $\bb$ is located at
\begin{gather}\label{eq:source k}
 z_\bv+\sum_{j=1}^{k-1}\V(e_j) = z_\bv+\big(\textrm{$e_k$-th row of matrix} \ (-\II+\rho_{\sm,1})\cdot\V\big) .
\end{gather}

The vector from the source point of edge $e_k$ of $\bb$ to the marked point $B_{\bb,\sm,\theta,\V}$ in $\bb$ is obtained by subtracting~\eqref{eq:source k} from \eqref{eq:marked point 2}. The term $z_\bv$ cancels out. Noticing that $\bb=b(e_k)$ we are led to introduce the $|\ocE|\times |\ocE|$-matrix $\mathbf{B}_{\sm,\theta}$ by
\begin{gather}\label{eq:bigB}
\text{$e$-th row of } \mathbf{B}_{\sm,\theta} = \theta^t\cdot\diag(\gb_{b(e)})\cdot\big({-}\breuk{1}{2}\II+\rho_{\sm,1}\big) .
\end{gather}
The vector from the source point of edge $e$ to the marked point in the polygon $b(e)$ is then the $e$-th row of the matrix $(\mathbf{B}_{\sm,\theta}+\II-\rho_{\sm,1})\cdot\V$.

Proceeding in the same way for the white polygons we define the $|\ocE|\times |\ocE|$-matrix $\mathbf{W}_{\sm,\theta}$ by
\begin{gather}\label{eq:bigW}
\text{$e$-th row of } \mathbf{W}_{\sm,\theta} = \theta^t\cdot\diag(\gb_{w(e)})\cdot\big({-}\breuk{1}{2}\II+\rho_{\sm,0}\big) .
\end{gather}

The following proposition summarizes our findings about the quadrangles.

\begin{Proposition}\label{prop:quadrangle} In the quadrangle corresponding to $e\in\ocE$
$$
\includegraphics{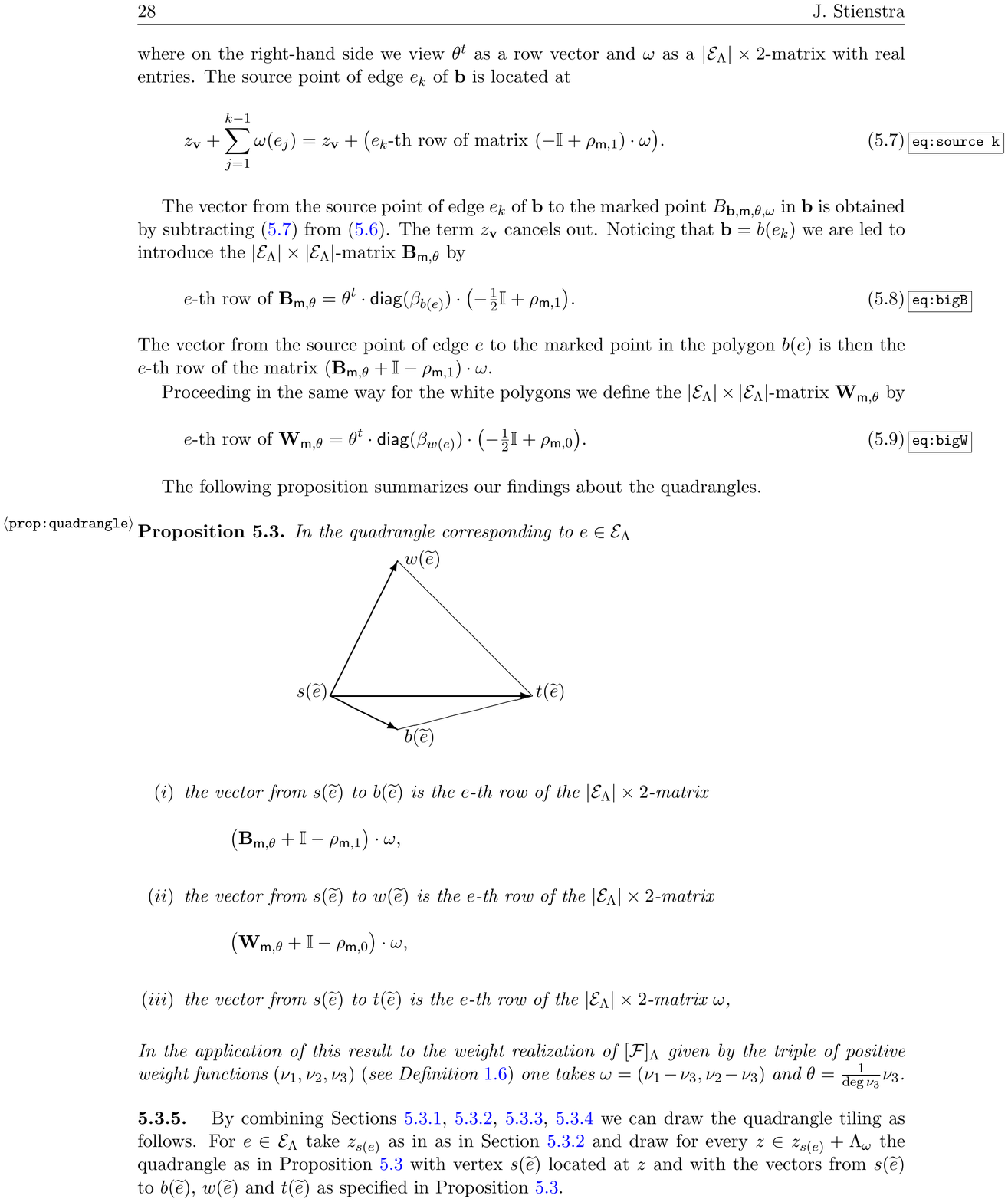}
$$
\begin{enumerate}\itemsep=0pt{\samepage
\item[$(i)$] the vector from $s(\widetilde{e})$ to $b(\widetilde{e})$ is the $e$-th row of the $|\ocE|\times 2$-matrix
\begin{gather*}\bigl(\mathbf{B}_{\sm,\theta}+\II-\rho_{\sm,1}\bigr)\cdot\V ,\end{gather*}
\item[$(ii)$] the vector from $s(\widetilde{e})$ to $w(\widetilde{e})$ is the $e$-th row of the $|\ocE|\times 2$-matrix
\begin{gather*}\bigl(\mathbf{W}_{\sm,\theta}+\II-\rho_{\sm,0}\bigr)\cdot\V ,\end{gather*}
\item[$(iii)$] the vector from $s(\widetilde{e})$ to $t(\widetilde{e})$ is the $e$-th row of the $|\ocE|\times 2$-matrix $\V$,}
\end{enumerate}
In the application of this result to the weight realization of $\SFL$ given by the triple of positive weight functions $(\nu_1,\nu_2,\nu_3)$ $($see Definition~{\rm \ref{def:weight realization})} one takes $\V=(\nu_1-\nu_3, \nu_2-\nu_3)$ and $\theta=\frac{1}{\deg \nu_3}\nu_3$.
\end{Proposition}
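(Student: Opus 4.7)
The plan is to verify each of the three claims by unfolding the definitions of $\mathbf{B}_{\sm,\theta}$, $\mathbf{W}_{\sm,\theta}$, and the broken-cycle matrices $\rho_{\sm,0}$, $\rho_{\sm,1}$; essentially everything needed is already assembled in Section~\ref{pph:quad}, so the work is to check that the row-by-row bookkeeping lines up. Claim $(iii)$ is immediate from the definition of $\V$ as the map assigning to each edge its displacement vector, so I would dispose of it in one line.

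For claim $(i)$ I would first establish the key combinatorial identity
\begin{gather*}
e_k\text{-th row of } \rho_{\sm,1}\cdot\V \;=\; \sum_{i=1}^{k}\V(e_i),
\end{gather*}
where $(e_1,\ldots,e_q)$ denotes the cycle of $\gs_1$ containing $e_k$, written with $e_1\in\sm$. This follows from expanding $\rho_{\sm,1}=(\II-\vgs_{\sm,1})^{-1}=\sum_{k\geq0}\vgs_{\sm,1}^k$ and observing that the column operation defining $\vgs_{\sm,1}$ "breaks" each cycle of $\gs_1$ at its $\sm$-edge, turning each cycle into a linear chain; the $(e_k,e_i)$-entry of $\rho_{\sm,1}$ is then $1$ iff $i\leq k$. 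Consequently the $e_k$-th row of $(-\II+\rho_{\sm,1})\cdot\V$ equals $\sum_{i=1}^{k-1}\V(e_i)$, which is exactly the offset from $z_\bv$ to $s(\widetilde{e}_k)$ given by \eqref{eq:source k}, while the $e_k$-th row of $\bigl(-\tfrac12\II+\rho_{\sm,1}\bigr)\cdot\V$ equals $\sum_{i=1}^{k-1}\V(e_i)+\tfrac12\V(e_k)$, i.e.\ the offset from $z_\bv$ to the midpoint of $\widetilde{e}_k$.

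Next I would multiply on the left by $\theta^t\cdot\diag(\gb_{b(e)})$. Because $\gb_\bb$ (see~\eqref{eq:black cycle}) is the indicator of the edges on the boundary of $\bb$, this extracts precisely the $\theta$-weighted sum of midpoints of those edges, reproducing the right-hand side of~\eqref{eq:marked point 2}. Hence the $e$-th row of $\mathbf{B}_{\sm,\theta}\cdot\V$ is $B_{b(e),\sm,\theta,\V}-z_\bv$. Subtracting the displacement $s(\widetilde{e})-z_\bv$ — which is the $e$-th row of $(-\II+\rho_{\sm,1})\cdot\V$ — from this yields $b(\widetilde{e})-s(\widetilde{e})$ as the $e$-th row of $(\mathbf{B}_{\sm,\theta}+\II-\rho_{\sm,1})\cdot\V$, proving $(i)$. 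Claim $(ii)$ is entirely parallel, replacing $\gs_1$, $\rho_{\sm,1}$, $\gb_\bb$, $\mathbf{B}_{\sm,\theta}$, $b(e)$ by $\gs_0$, $\rho_{\sm,0}$, $\gb_\bw$, $\mathbf{W}_{\sm,\theta}$, $w(e)$.

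The main obstacle, such as it is, is notational rather than mathematical: one has to verify that the definition~\eqref{eq:bigB} of the $e$-th row of $\mathbf{B}_{\sm,\theta}$ in terms of $\gb_{b(e)}$ is independent of how we label the edges within the cycle of $b(e)$ (i.e.\ independent of which edge plays the role of $e_1$), and that the base vertex $z_\bv$ cancels properly when forming differences. Both are straightforward once the identification of $(\rho_{\sm,1})_{e',e}$ with the partial-sum indicator is in hand; after that, the proposition reduces to matching the right-hand sides of \eqref{eq:marked point 2} and \eqref{eq:source k} to rows of the asserted matrix products.
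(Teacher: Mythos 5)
Your argument is correct and is essentially identical to the paper's own derivation, which is not given as a separate proof but is the computation carried out in Sections~\ref{sec:draw GD} and~\ref{pph:quad} (the marked-point formula \eqref{eq:marked point}, its matrix form \eqref{eq:marked point 2}, the source-point formula \eqref{eq:source k}, and the cancellation of $z_\bv$), with your series expansion of $\rho_{\sm,1}$ merely making explicit what the paper asserts as ``the meaning of these matrices.'' The one caveat --- shared with the paper, so not a gap you introduced --- is that the identification of the $(e_k,e_i)$-entry of $\rho_{\sm,1}$ with the indicator of $i\le k$ does not quite follow from the literal definition of $\vgs_{\sm,1}$ by \emph{column} scaling with the $\sm$-edge placed \emph{first} in its cycle (that convention yields the transposed chain, breaking \eqref{eq:source k} by a summand $\V(e_1)$); one must either scale rows by $1-\sm(e)$ or write each cycle with its $\sm$-edge last, a convention slip already present in Section~\ref{pph:Poisson3}.
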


\pph{} By combining Sections~\ref{pph:latweight}, \ref{pph:lifted vertices}, \ref{sec:draw GD} and \ref{pph:quad} we can draw the quadrangle tiling as follows. For $e\in\ocE$ take $z_{s(e)}$ as in as in Section~\ref{pph:lifted vertices} and draw for every $z\in z_{s(e)}+\gL_\V$ the quadrangle as in Proposition~\ref{prop:quadrangle} with vertex $s(\widetilde{e})$ located at $z$ and with the vectors from $s(\widetilde{e})$ to~$b(\widetilde{e})$, $w(\widetilde{e})$ and~$t(\widetilde{e})$ as specified in Proposition~\ref{prop:quadrangle}.

\LastPageEnding

\end{document}